\documentclass[11pt]{article}

\usepackage[a4paper,margin=1in]{geometry}
\usepackage[T1]{fontenc}
\usepackage[utf8]{inputenc}
\usepackage{lmodern}
\usepackage{amsmath,amssymb,amsthm,mathtools}
\usepackage{microtype}
\usepackage[hidelinks]{hyperref}
\hypersetup{
  pdftitle={A Product Principle for Harmonic Schwarz Lemmas: Boxes, Polydiscs, and Metric Geometry},
  pdfauthor={Miljan Knezevic and Miodrag Mateljevic},
  pdfkeywords={harmonic mappings, prescribed values, product domains, Euclidean boxes, polydiscs, operator norm, strip method, Kobayashi metric, Finsler metrics, bang-bang extremals}
}
\newcommand{\doi}[1]{\href{https://doi.org/#1}{doi:#1}}
\numberwithin{equation}{section}

\theoremstyle{plain}
\newtheorem{theorem}{Theorem}[section]
\newtheorem{lemma}[theorem]{Lemma}
\newtheorem{proposition}[theorem]{Proposition}
\newtheorem{corollary}[theorem]{Corollary}
\theoremstyle{definition}

\theoremstyle{remark}
\newtheorem{remark}[theorem]{Remark}

\newcommand{\D}{\mathbb D}
\newcommand{\T}{\mathbb T}
\newcommand{\C}{\mathbb C}
\newcommand{\R}{\mathbb R}
\newcommand{\dm}{\,dm}
\newcommand{\re}{\operatorname{Re}}

\newcommand{\Hyp}{\operatorname{Hyp}}
\newcommand{\sgn}{\operatorname{sgn}}

\title{A Product Principle for Harmonic Schwarz Lemmas: Boxes, Polydiscs, and Metric Geometry}
\author{
Miljan Kne\v{z}evi\'c%
\thanks{Faculty of Mathematics, University of Belgrade,
Studentski trg 16, 11000 Belgrade, Serbia.
E-mail: \texttt{miljan.knezevic@matf.bg.ac.rs}.}
\and
Miodrag Mateljevi\'c%
\thanks{Faculty of Mathematics, University of Belgrade,
Studentski trg 16, 11000 Belgrade, Serbia.
E-mail: \texttt{miodrag@matf.bg.ac.rs}.}
}
\date{}

\begin{document}

\maketitle

\begin{abstract}
We establish an exact product principle for the Euclidean operator norm of
differentials of harmonic maps. For a bounded domain \(G\subset\R^m\)
and \(p\in G\), let \(M_G(p)\) denote the supremum of \(\|dF_0\|\) over
harmonic maps \(F:\D\to G\) with \(F(0)=p\). For bounded domains
\(G_j\subset\R^{m_j}\), we prove
\[
 M_{G_1\times\cdots\times G_N}(p_1,\ldots,p_N)^2
 =\sum_{j=1}^N M_{G_j}(p_j)^2.
\]
The theorem separates the geometry of the individual factors from the
Euclidean geometry of the product: the factor extremal constants combine by
a sum-of-squares law, while equality is governed by a single compatibility
condition, namely a common maximizing direction for the component
differentials. Neither convexity nor attainment of the factor suprema is
required.

Combining the product principle with the sharp interval and disk factor
problems yields exact operator-norm estimates and all equality cases for
harmonic maps into boxes and polydiscs. In both families, for every extremal
map, the real differential at the origin has one-dimensional image.

The same factor constants also define coordinatewise metrics for which
the harmonic contraction estimate is sharp when the source disk is equipped
with its Poincar\'e metric of curvature \(-1\). For boxes, the resulting metric is complete and equals
twice the restriction of the Kobayashi-Royden metric of the product of
vertical strips. For polydiscs, the harmonic product metric is pointwise
maximal among contracting metrics of the form
\(\max_j a_j(p)|v_j|\), with \(a_j(p)>0\). It is strictly smaller than twice
the Kobayashi-Royden metric on every nonzero tangent vector, and its induced
path metric is incomplete.
\end{abstract}

\medskip
\noindent\textbf{Keywords.}
Harmonic mappings; prescribed values; product domains; Euclidean boxes;
polydiscs; operator norm; strip method; Kobayashi metric; Finsler metrics;
bang-bang extremals.

\medskip
\noindent\textbf{Mathematics Subject Classification (2020).}
Primary 31A05, 30C80; Secondary 30F45, 32F45, 49K15.

\section{Introduction}\label{sec:introduction}

For a bounded domain \(G\subset\R^m\) and a point \(p\in G\), consider
\begin{equation}\label{eq:intro-MG}
 M_G(p)=
 \sup\{\|dF_0\|:F:\D\to G\ \text{harmonic},\ F(0)=p\},
\end{equation}
where the differential is viewed as a real linear map from \(\R^2\) to
\(\R^m\), and we use its Euclidean operator norm. All vector-valued harmonic
maps are understood componentwise. Harmonicity is preserved under conformal
reparametrization of the disk, but not under general nonlinear changes of the
target. The prescribed value therefore remains part of the extremal problem,
along with the target domain and the norm used for the differential.

The main structural result is an exact product theorem for the quantity in
\eqref{eq:intro-MG}. If \(G_j\subset\R^{m_j}\) are bounded domains and
\(G=G_1\times\cdots\times G_N\) is equipped with the Euclidean product norm,
then
\begin{equation}\label{eq:intro-product-principle}
 M_G(p_1,\ldots,p_N)^2
 =
 \sum_{j=1}^NM_{G_j}(p_j)^2.
\end{equation}
The identity, proved in Theorem~\ref{thm:product-principle}, requires neither
convexity nor attainment of the factor suprema. Independent rotations of the
parameter disk align maximizing directions of near-extremal factor maps and
give the reverse inequality. The equality problem is equally rigid:
Theorem~\ref{thm:product-equality} shows that a product map is extremal
exactly when every component is extremal and all component differentials
attain their norms on a common unit vector in \(\R^2\).

The essential point of the product theorem is this separation of two kinds of
geometry. All target-specific information is contained in the factor
quantities \(M_{G_j}(p_j)\); once they are known, the sharp constant on the
product requires no further target-specific optimization. The product itself
contributes the Euclidean sum-of-squares law and, in the equality case, the
common maximizing direction in the parameter plane. This distinction allows
the same theorem to treat factors with very different extremal boundary
behavior and very different intrinsic metrics.

For interval factors, the principle gives an explicit model immediately.
Consider
\[
 Q=\prod_{j=1}^n(a_j,b_j)\subset\R^n.
\]
For \(p=(p_1,\ldots,p_n)\in Q\), set
\begin{equation}\label{eq:intro-Aj}
 A_j(p_j)=\frac{2(b_j-a_j)}{\pi}
 \sin\frac{\pi(p_j-a_j)}{b_j-a_j}.
\end{equation}
The exact prescribed-value formula is
\begin{equation}\label{eq:intro-box-formula}
 M_Q(p)=
 \left(\sum_{j=1}^nA_j(p_j)^2\right)^{1/2}.
\end{equation}
Each factor constant is the sharp interval constant. After normalization to
\((0,1)\), the boundary function of a scalar extremal is the characteristic
function of an arc of prescribed length. Equality in the box estimate requires
these scalar extremals to have collinear gradients. Consequently, for every
box extremal, the real differential at the origin has one-dimensional image.
For rectangles, Lemma~\ref{lem:operator-norm} turns
\eqref{eq:intro-box-formula} into the exact formula for
\[
 |f_z(0)|+|f_{\overline z}(0)|.
\]

The factor constants also lead to two distinct infinitesimal geometries. The
Euclidean extremal density reflects the \(\ell^2\) combination in
\eqref{eq:intro-product-principle}, whereas coordinatewise contraction
produces an \(\ell^\infty\) product metric. For boxes, the latter has a
classical complex-geometric interpretation. Replacing \((a_j,b_j)\) by the vertical
strip
\[
 S_j=\{w\in\C:a_j<\re w<b_j\}
\]
gives the complexification
\[
 S_Q=\prod_{j=1}^nS_j,\qquad Q=S_Q\cap\R^n.
\]
The interval \((a_j,b_j)\) is a hyperbolic geodesic in \(S_j\), and the sharp
scalar density \(2/A_j\) is exactly the restriction of the curvature \(-1\)
Poincar\'e density of the strip. Consequently, the product metric on \(Q\) is
twice the restriction of the Kobayashi-Royden metric of \(S_Q\), and its path
metric is twice the restricted Kobayashi distance and is complete. The strip
method therefore recovers the sharp scalar estimate and, at the same time,
identifies its density with an intrinsic complex-geometric metric.

Every strip \(S_j\) is biholomorphic to \(\D\), so \(S_Q\) is biholomorphic
to the polydisc \(\D^n\). This equivalence concerns Kobayashi geometry, but it
does not transport the harmonic mapping problem: postcomposition by a
nonlinear target biholomorphism need not preserve harmonicity. The disk-valued
factor must therefore be analyzed directly.

For \(0\le r<1\), define
\begin{equation}\label{eq:intro-Mdisk}
 \mathcal M(r)=
 \sup\{\|df_0\|:f:\D\to\D\ \text{harmonic},\ |f(0)|=r\}.
\end{equation}
We use the disk extremal theorem from
\cite{KnezevicMateljevic2026}, recalled with proof in
Section~\ref{sec:disk-factor}. No conformality, injectivity, or orientation
assumption is imposed. At the center, \(\mathcal M(0)=4/\pi\). For
\(0<r<1\), there is a unique \(\lambda(r)>0\) such that
\begin{equation}\label{eq:intro-lambda}
 r=
 \int_0^{2\pi}
 \frac{\lambda(r)}
 {\sqrt{\lambda(r)^2+\cos^2t}}\,
 \frac{dt}{2\pi},
\end{equation}
and the exact constant is
\begin{equation}\label{eq:intro-Mdisk-formula}
 \mathcal M(r)=
 2\int_0^{2\pi}
 \frac{\cos^2t}
 {\sqrt{\lambda(r)^2+\cos^2t}}\,
 \frac{dt}{2\pi}.
\end{equation}
A supporting inequality and a comparison of derivative directions determine
the optimal direction and the equality cases. If \(p=re^{i\beta}\ne0\), the
extremal boundary functions are
\begin{equation}\label{eq:intro-disk-extremal}
 \Phi_{p,\alpha}(e^{it})
 =
 e^{i\beta}
 \frac{\lambda(r)+i\cos(t-\alpha)}
 {\sqrt{\lambda(r)^2+\cos^2(t-\alpha)}}.
\end{equation}
Their differentials attain the operator norm on \(\R e^{i\alpha}\), vanish on
the orthogonal line, and have image perpendicular to the prescribed value
\(p\). Proposition~\ref{prop:disk-equality} shows that these are all
extremals for \(r>0\); the centered case is described separately there.

Applying the product theorem to disk factors gives, for
\(p=(p_1,\ldots,p_n)\in\D^n\),
\begin{equation}\label{eq:intro-polydisc}
 \sup\{\|dF_0\|:F:\D\to\D^n\ \text{harmonic},\ F(0)=p\}
 =
 \left(
 \sum_{j=1}^n\mathcal M(|p_j|)^2
 \right)^{1/2}.
\end{equation}
In particular, the centered constant is \(4\sqrt n/\pi\). All equality cases
are obtained by combining the scalar disk extremals with the common-direction
condition. For every extremal map into a box or a polydisc, the real
differential at the origin has one-dimensional image.

The exact disk constant also defines the density
\[
 \rho_h(w)=\frac{2}{\mathcal M(|w|)}.
\]
Its coordinatewise product metric contracts every harmonic map
\(\D\to\D^n\) with sharp infinitesimal constant. Among coordinatewise
norms of the form \(\max_j a_j(p)|v_j|\), with \(a_j(p)>0\), it is
pointwise maximal. Its geometry
is nevertheless different from the strip product: it is strictly smaller than
twice the Kobayashi-Royden metric on every nonzero tangent vector and has
finite distance to the Euclidean boundary. Thus the strip product metric is
complete, whereas the sharp harmonic product metric on the polydisc is not.

The box and polydisc applications place the product principle in two very
different settings. Box extremals come from endpoint-valued interval
boundary functions and lead to a complete strip geometry. Polydisc extremals
come from unimodular disk boundary functions and lead to a sharp harmonic
metric that is incomplete. What is common to the two theories is therefore
not the geometry of the factors, but the product mechanism that combines
their sharp constants and equality directions.

The factor problems themselves belong to different parts of harmonic Schwarz
theory. Chen proved the sharp cosine estimate and its equality cases for real
harmonic maps into \((-1,1)\) \cite{Chen2013}. The direct boundary proof given
here is retained because its arc description is precisely the form needed for
box extremals and for the product equality analysis. Classical disk estimates
of Heinz and Colonna give the sharp centered Schwarz bound and uniform
differential estimates \cite{Heinz,Colonna}; see also
\cite{DurenBook,KalajVuorinen}. Duren and Schober developed a variational
theory for univalent harmonic maps onto convex regions and related linear
extremal problems for harmonic mappings of the disk
\cite{DurenSchoberVariational,DurenSchober}. Further extremal problems for
harmonic mappings into convex regions were studied by Wegmann \cite{Wegmann},
and the conformal-at-a-point problem for harmonic maps into real balls is
treated by Forstneri\v{c} and Kalaj \cite{ForstnericKalaj}. The unrestricted
prescribed-value disk factor used below was determined in
\cite{KnezevicMateljevic2026}.

The new structural step in the present paper is the exact passage from sharp
factor problems to finite Euclidean products, together with the corresponding
equality criterion. It is independent of the particular geometry of the
factors. This is also distinct from the prescribed-value problem studied in
\cite{KnezevicMateljevicTarget2026}, where one prescribes the image plane of
the differential at the origin and requires the differential to be conformal
there. Here the Euclidean operator norm is unrestricted, and the product
theorem transfers sharp factor constants and equality cases to product
targets, where the same constants also generate the coordinatewise contraction
metrics studied below.

Related several-variable work concerns Schwarz-Pick estimates and boundary
behavior for pluriharmonic mappings on polydiscs or between higher-dimensional
targets \cite{ChenRasila,ChenHamada,HuangZhaoLi}. The setting here is
different: the parameter domain is the one-dimensional disk, the target is
\(\D^n\), the target value is prescribed, and the optimized quantity is the
Euclidean operator norm of the full real differential. Harmonic densities and
the associated distance-decreasing viewpoint were considered earlier in
\cite{Mateljevic2018,ArsenovicGajicMateljevic}; the density \(\rho_h\) above
is determined here by the exact prescribed-value constant \(\mathcal M\).
Strip methods and their hyperbolic interpretation are developed in
\cite{Mateljevic2018,MateljevicSvetlik,KMS2025}.

The extremal boundary functions admit a complementary description through
duality and optimal control. For interval controls, pointwise maximization of
a linear Hamiltonian selects the endpoints and produces bang-bang boundary
values. For nonzero prescribed disk values, the corresponding support problem
selects a unique point of the unit circle for almost every boundary parameter
and produces the smooth unimodular boundary function
\eqref{eq:intro-disk-extremal}. In products, these selections occur
factorwise; the Euclidean operator norm adds the common maximizing direction
in \(\R^2\).

The paper is organized as follows. Section~\ref{sec:preliminaries} records the
Poisson formulas and operator-norm facts used throughout.
Section~\ref{sec:interval} treats the interval factor. Sections~\ref{sec:rectangle}
through \ref{sec:boxes} develop the rectangle and box models, including all
equality cases, and Section~\ref{sec:product-principle} isolates the general
product principle behind them. Section~\ref{sec:pointwise-general} gives its
pointwise form and the extension to complete conformal metrics on the domain.
Section~\ref{sec:strip-product} develops the strip complexification and its
Kobayashi interpretation. Section~\ref{sec:disk-factor} gives a self-contained
treatment of the disk-valued factor, and Section~\ref{sec:polydisc} derives
the polydisc theorem and the sharp harmonic product metric.
Section~\ref{sec:duality-control} gives the dual and control interpretations,
and Section~\ref{sec:conclusion} summarizes the resulting product geometry.

\section{Preliminaries}\label{sec:preliminaries}

We write
\[
 \D=\{z\in\C:|z|<1\},\qquad \T=\partial\D,
\]
and denote by \(m\) normalized Lebesgue measure on \(\T\). Thus
\[
 \int_\T\psi\dm=\frac1{2\pi}\int_0^{2\pi}\psi(e^{it})\,dt.
\]
For \(U\in L^\infty(\T)\), its Poisson integral is
\[
 P[U](re^{i\theta})
 =\int_\T P_r(\theta-t)U(e^{it})\dm,
 \qquad
 P_r(s)=\frac{1-r^2}{1-2r\cos s+r^2}.
\]

The following lemma records the boundary representation and the first
derivative formulas.

\begin{lemma}\label{lem:real-poisson}
Let \(u\) be a bounded real-valued harmonic function on \(\D\). Then there is a
unique \(U\in L^\infty(\T)\) such that \(u=P[U]\). Moreover,
\begin{equation}\label{eq:real-poisson-data}
 u(0)=\int_\T U\dm,
 \qquad
 u_x(0)=2\int_\T U(e^{it})\cos t\dm,
 \qquad
 u_y(0)=2\int_\T U(e^{it})\sin t\dm.
\end{equation}
Consequently,
\begin{equation}\label{eq:real-poisson-grad}
 |\nabla u(0)|
 =2\sup_{\theta\in\R}
 \int_\T U(e^{it})\cos(t-\theta)\dm.
\end{equation}
If \(u(\D)\subset(a,b)\), then \(a\leq U\leq b\) almost everywhere.
Conversely, if \(a\leq U\leq b\) almost everywhere and \(U\) is not almost
everywhere equal to either constant \(a\) or \(b\), then
\(P[U](\D)\subset(a,b)\).
\end{lemma}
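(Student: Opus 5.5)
The plan is to obtain \(U\) from weak-\(*\) compactness, read off the formulas \eqref{eq:real-poisson-data} by differentiating the Poisson kernel at \(r=0\), and settle the range statements with the maximum principle and the strict positivity of \(P_r\).

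\textbf{Existence and uniqueness of \(U\).} Set \(u_r(e^{it})=u(re^{it})\) for \(0<r<1\). These functions are bounded in \(L^\infty(\T)\) by \(\sup|u|\). By Banach--Alaoglu we choose a sequence \(r_k\to1\) with \(u_{r_k}\to U\) weak-\(*\) in \(L^\infty=(L^1)^*\). Since \(u\) is harmonic on a neighbourhood of \(\overline{D(0,r_k)}\), we have \(u(r_kz)=P[u_{r_k}](z)\). For fixed \(z\) the kernel \(t\mapsto P_{|z|}(\arg z-t)\) lies in \(L^1\), so letting \(k\to\infty\) gives \(u=P[U]\).

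For uniqueness, suppose \(P[U]=0\). Expanding the kernel, \(P_r(s)=\sum_{n\in\mathbb Z}r^{|n|}e^{ins}\), shows that every Fourier coefficient of \(U\) vanishes. Hence \(U=0\) almost everywhere.

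\textbf{The formulas at the origin.} Differentiating the kernel at \(r=0\) gives
\[
P\approx 1+2r\cos(\theta-t)+O(r^2).
\]
Writing \(x=r\cos\theta\), \(y=r\sin\theta\) and using \(\cos(\theta-t)=\cos\theta\cos t+\sin\theta\sin t\), we get
\[
u(z)=\int U\,dm+2x\int U\cos t\,dm+2y\int U\sin t\,dm+O(r^2).
\]
The error term is uniform by dominated convergence, since \(U\) is bounded. This yields \eqref{eq:real-poisson-data}. Then \(|\nabla u(0)|=\sup_\theta\bigl(u_x\cos\theta+u_y\sin\theta\bigr)\), and this supremum equals the expression in \eqref{eq:real-poisson-grad}.

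\textbf{If \(u(\D)\subset(a,b)\), then \(a\le U\le b\).} Each \(u_{r_k}\) takes values in \((a,b)\). The set \(\{V\in L^\infty: a\le V\le b \text{ a.e.}\}\) is weak-\(*\) closed, because it is defined by the inequalities \(\int V\phi\ge a\int\phi\) (and the analogous one with \(b\)) for all \(\phi\ge0\) in \(L^1\). So the weak-\(*\) limit \(U\) lies in this set.

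\textbf{The converse.} Suppose \(a\le U\le b\) a.e. and \(U\) is not a.e. equal to \(a\) or to \(b\). Then \(b-U\ge0\) and \(b-U\) has positive measure support. Since \(P_r>0\) for \(r<1\), we get
\[
b-P[U](z)=\int P(\cdot)(b-U)\,dm>0 .
\]
The same argument with \(U-a\) gives \(P[U](z)>a\). Hence \(P[U](\D)\subset(a,b)\).

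The only delicate point is the existence step. Weak-\(*\) compactness, together with the \(L^1\)-integrability of the Poisson kernel for each fixed \(z\), handles it cleanly. Everything else is direct computation.
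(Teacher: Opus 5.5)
Your proof is correct. The kernel expansion at the origin, the dual formula for \(|\nabla u(0)|\), and the strict-positivity argument for the converse are the same as in the paper. The difference lies in how \(U\) is produced and how \(a\le U\le b\) is obtained.

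The paper cites the standard Poisson representation together with the existence of almost-everywhere radial limits, and reads off \(a\le U\le b\) from those radial limits, i.e.\ from Fatou's theorem. You instead construct \(U\) as a weak-\(*\) limit of the dilates \(u_r\), prove uniqueness by Fourier coefficients, and derive the bound from the weak-\(*\) closedness of the order interval \(\{a\le V\le b\}\). This makes the lemma self-contained and avoids Fatou's theorem entirely. Two small remarks: extracting a \emph{sequence} from Banach--Alaoglu uses separability of \(L^1(\T)\); and once uniqueness is known, the whole family \(u_r\) converges weak-\(*\) to \(U\), not only a subsequence.

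What the paper's route gives in addition is the identification of \(U\) with the almost-everywhere radial boundary values of \(u\). The paper relies on this later whenever it speaks of ``the radial boundary function'' (Theorem~\ref{thm:interval} and Propositions~\ref{prop:rect-equality}, \ref{prop:box-equality}, \ref{prop:disk-equality}). Your weak-\(*\) construction does not supply that identification by itself, so Fatou's theorem would still be needed for those later uses, though not for the lemma as stated.
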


\begin{proof}
The Poisson representation and the almost-everywhere radial limits are
standard for bounded harmonic functions; see \cite{AxlerBourdonRamey}.  The
bounds on \(U\) follow from the radial limits.  The expansion
\[
 P_r(\theta-t)=1+2r\cos(\theta-t)+O(r^2)
\]
is uniform in \(t\) as \(r\to0\) and gives
\eqref{eq:real-poisson-data} by differentiation.  Formula
\eqref{eq:real-poisson-grad} is the dual characterization of the Euclidean
norm of the gradient; replacing \(\theta\) by \(\theta+\pi\) changes the
sign.  Finally, the Poisson kernel is strictly positive.  If \(P[U](z)=a\)
at an interior point, then \(U=a\) almost everywhere, and the same argument
applies to \(b\).  This proves the last assertion.
\end{proof}

The same formulas apply to complex-valued boundary functions.

\begin{lemma}\label{lem:complex-poisson}
Let \(f=P[\Phi]\), where \(\Phi\in L^\infty(\T,\C)\). Then
\[
 f(0)=\int_\T\Phi\dm
\]
and, for every \(\theta\in\R\),
\begin{equation}\label{eq:complex-directional}
 df_0(e^{i\theta})
 =
 2\int_\T\Phi(e^{it})\cos(t-\theta)\dm.
\end{equation}
Consequently,
\begin{equation}\label{eq:complex-poisson-norm}
 \|df_0\|
 =
 2\sup_{\theta\in\R}
 \left|
 \int_\T\Phi(e^{it})\cos(t-\theta)\dm
 \right|.
\end{equation}
\end{lemma}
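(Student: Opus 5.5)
The plan is to reduce everything to the real case, Lemma~\ref{lem:real-poisson}, by splitting \(\Phi=U+iV\) with \(U=\re\Phi\) and \(V=\operatorname{Im}\Phi\), both in \(L^\infty(\T)\). Since the Poisson integral is linear and has a real kernel, \(f=P[U]+iP[V]\), where \(u=P[U]\) and \(v=P[V]\) are bounded real harmonic functions. The identity \(f(0)=\int_\T\Phi\dm\) then follows by applying the first formula of \eqref{eq:real-poisson-data} to \(u\) and to \(v\) separately and adding.

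For the directional derivative, I identify \(\R^2\) with \(\C\), so that \(df_0(e^{i\theta})=\cos\theta\,f_x(0)+\sin\theta\,f_y(0)\). By \eqref{eq:real-poisson-data} applied to \(u\) and \(v\),
\[
 f_x(0)=2\int_\T\Phi(e^{it})\cos t\dm,\qquad f_y(0)=2\int_\T\Phi(e^{it})\sin t\dm .
\]
Combining these with the identity \(\cos\theta\cos t+\sin\theta\sin t=\cos(t-\theta)\) and linearity of the integral gives \eqref{eq:complex-directional}. As a cross-check, one can instead differentiate the expansion \(P_r(\theta-t)=1+2r\cos(\theta-t)+O(r^2)\) directly in the radial direction \(e^{i\theta}\); the \(O(r^2)\) term is uniform in \(t\), and \(\Phi\) is bounded.

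The norm formula \eqref{eq:complex-poisson-norm} is then immediate. The operator norm of a real linear map \(L:\R^2\to\R^2\cong\C\) is \(\sup_{|e|=1}|Le|\), and every unit vector has the form \(e^{i\theta}\). Substituting \eqref{eq:complex-directional} gives the claim.

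There is no real obstacle. The only point needing care is that \(\Phi\) is only bounded measurable, so differentiation under the integral must be justified. This is handled either by quoting the real lemma componentwise, which already covers it, or by the uniform \(O(r^2)\) bound together with dominated convergence. Unlike \eqref{eq:real-poisson-grad}, no sign symmetrization is needed here because of the absolute value.
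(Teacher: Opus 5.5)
Your proposal is correct and follows the paper's proof, which likewise applies Lemma~\ref{lem:real-poisson} to the real and imaginary parts of \(f\). You simply make explicit the step \(df_0(e^{i\theta})=\cos\theta\,f_x(0)+\sin\theta\,f_y(0)\) combined with \(\cos\theta\cos t+\sin\theta\sin t=\cos(t-\theta)\), and note that the operator norm is the supremum over unit vectors \(e^{i\theta}\).
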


\begin{proof}
The assertion follows by applying Lemma~\ref{lem:real-poisson} to the real
and imaginary parts of \(f\).
\end{proof}

For a \(C^1\) complex-valued map \(f\), we use
\[
 f_z=\frac12(f_x-if_y),\qquad
 f_{\overline z}=\frac12(f_x+if_y).
\]
The operator norm is given by the following elementary identity.

\begin{lemma}\label{lem:operator-norm}
For every \(C^1\) complex-valued map \(f\) and every point \(z\) in its
domain,
\begin{equation}\label{eq:norm-identity}
 \|df_z\|=|f_z(z)|+|f_{\overline z}(z)|,
\end{equation}
where \(df_z:\R^2\to\R^2\) is the real differential.
\end{lemma}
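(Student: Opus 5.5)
We compute the action of the real differential on an arbitrary unit vector and then maximize over its direction. Fix \(z\) and write \(a=f_z(z)\), \(b=f_{\overline z}(z)\). Since \(f_x=f_z+f_{\overline z}\) and \(f_y=i(f_z-f_{\overline z})\), which follows from the definitions by adding and subtracting, the differential acts on a tangent vector \(h=h_1+ih_2\in\C\cong\R^2\) by
\[
 df_z(h)=h_1f_x+h_2f_y=a\,h+b\,\overline h .
\]
It therefore suffices to show that
\[
 \sup_{|h|=1}|a h+b\overline h|=|a|+|b| .
\]

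The upper bound follows from the triangle inequality: for \(|h|=1\) we have \(|ah+b\overline h|\le|a|+|b|\).

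For the lower bound we choose \(h=e^{i\theta}\) so that the two terms have a common argument. If \(a=0\) or \(b=0\), every unit \(h\) attains \(|a|+|b|\). Otherwise write \(a=|a|e^{i\alpha}\) and \(b=|b|e^{i\beta}\). Then
\[
 ah+b\overline h=|a|e^{i(\alpha+\theta)}+|b|e^{i(\beta-\theta)} .
\]
Taking \(\theta=(\beta-\alpha)/2\) makes both exponents equal to \(i(\alpha+\beta)/2\). The modulus is then \(|a|+|b|\), and the supremum is attained.

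The only step requiring care is the identification \(df_z(h)=ah+b\overline h\), which is a routine consequence of the definitions of \(f_z\) and \(f_{\overline z}\). The rest is elementary, so I expect no real obstacle.
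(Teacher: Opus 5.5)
Your proof is correct and follows the paper's argument: write \(df_z(e^{i\theta})=f_z(z)e^{i\theta}+f_{\overline z}(z)e^{-i\theta}\), get the upper bound from the triangle inequality, and obtain equality by choosing \(\theta\) so that the two nonzero terms have the same argument. Your explicit derivation of \(df_z(h)=ah+b\overline h\) from \(f_x=f_z+f_{\overline z}\) and \(f_y=i(f_z-f_{\overline z})\), and the explicit choice \(\theta=(\beta-\alpha)/2\), fill in details that the paper leaves implicit.
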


\begin{proof}
For a unit vector \(e^{i\alpha}\),
\[
 df_z(e^{i\alpha})
 =
 f_z(z)e^{i\alpha}+f_{\overline z}(z)e^{-i\alpha}.
\]
The triangle inequality gives the upper estimate. There is a choice of
\(\alpha\) for which the two nonzero terms have the same argument; if one
term vanishes, equality is immediate.
\end{proof}

The product arguments require a block version of the elementary comparison
between the operator norm and the Hilbert-Schmidt norm.

\begin{lemma}\label{lem:block-operator}
Let \(E_1,\ldots,E_N\) be finite-dimensional Euclidean spaces and let
\(A_j:\R^2\to E_j\) be linear. For
\[
 A=(A_1,\ldots,A_N):
 \R^2\longrightarrow E_1\oplus\cdots\oplus E_N
\]
with the Euclidean product norm,
\begin{equation}\label{eq:block-operator}
 \|A\|^2\leq\sum_{j=1}^N\|A_j\|^2.
\end{equation}
Equality holds if and only if there is a unit vector \(v\in\R^2\) such that
\begin{equation}\label{eq:common-direction}
 |A_jv|=\|A_j\|,
 \qquad j=1,\ldots,N.
\end{equation}
\end{lemma}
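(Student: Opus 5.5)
The plan is to compute \(\|A\|\) directly from the definition of the operator norm on \(\R^2\) and to compare it term by term with the factor norms. For a unit vector \(v\in\R^2\), the Euclidean product norm gives
\[
 |Av|^2=\sum_{j=1}^N|A_jv|^2\le\sum_{j=1}^N\|A_j\|^2 ,
\]
since \(|A_jv|\le\|A_j\|\) for each \(j\). Taking the supremum over unit vectors \(v\) yields \eqref{eq:block-operator}.

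For the equality statement, I would first use compactness of the unit circle in \(\R^2\) together with continuity of \(v\mapsto|Av|\). These show that \(\|A\|\) is attained at some unit vector \(v_0\). If equality holds in \eqref{eq:block-operator}, then
\[
 \sum_j|A_jv_0|^2=\sum_j\|A_j\|^2 .
\]
Each summand on the left is at most the corresponding summand on the right, so every difference \(\|A_j\|^2-|A_jv_0|^2\) is nonnegative and their sum is zero. Hence each difference vanishes, and \(v_0\) is the required common direction satisfying \eqref{eq:common-direction}.

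Conversely, suppose a unit vector \(v\) satisfies \eqref{eq:common-direction}. Then
\[
 \|A\|^2\ge|Av|^2=\sum_j\|A_j\|^2 .
\]
Combined with the inequality already proved, this gives equality.

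There is no real obstacle here. The only point needing care is that the supremum defining \(\|A\|\) is attained, which is what lets the equality case produce an actual vector \(v\) rather than a maximizing sequence. Finite dimensionality, through compactness of the unit circle, supplies this.
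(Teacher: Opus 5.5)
Your proof is correct and follows the paper's argument essentially verbatim: the pointwise bound \(|Av|^2=\sum_j|A_jv|^2\le\sum_j\|A_j\|^2\), then term-by-term equality at a norm-attaining unit vector, with the converse immediate. Your explicit remark that compactness of the unit circle guarantees that \(\|A\|\) is attained is a detail the paper leaves implicit.
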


\begin{proof}
For every unit vector \(v\),
\[
 |Av|^2=\sum_{j=1}^N|A_jv|^2
 \leq\sum_{j=1}^N\|A_j\|^2,
\]
which proves \eqref{eq:block-operator}.  If equality holds, a unit vector
\(v\) attaining \(\|A\|\) satisfies
\[
 \sum_{j=1}^N|A_jv|^2
 =
 \sum_{j=1}^N\|A_j\|^2.
\]
Every summand on the left is bounded above by the corresponding summand on
the right, so equality holds term by term and
\eqref{eq:common-direction} follows.  The converse is immediate.
\end{proof}

For scalar blocks, the common-direction condition becomes the familiar
collinearity condition.

\begin{lemma}\label{lem:operator-HS}
Let \(A:\R^2\to\R^n\) be linear, and let
\(r_1,\ldots,r_n\in\R^2\) be the rows of its matrix in orthonormal
coordinates. Then
\begin{equation}\label{eq:operator-HS}
 \|A\|^2\leq\sum_{j=1}^n|r_j|^2.
\end{equation}
Equality holds if and only if the image of \(A\) has dimension at most one,
equivalently, if and only if the nonzero row vectors are collinear.
\end{lemma}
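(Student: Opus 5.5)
The plan is to deduce Lemma~\ref{lem:operator-HS} from Lemma~\ref{lem:block-operator}. We take \(N=n\) and \(E_j=\R\), and let \(A_j:\R^2\to\R\) be the \(j\)-th coordinate of \(A\). In orthonormal coordinates, \(A_jv=\langle r_j,v\rangle\), so \(\|A_j\|=|r_j|\) by Cauchy--Schwarz. The product norm on \(\R\oplus\cdots\oplus\R\) is the Euclidean norm of \(\R^n\). Hence \eqref{eq:block-operator} gives \eqref{eq:operator-HS} at once.

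For the equality case, Lemma~\ref{lem:block-operator} says equality holds if and only if some unit vector \(v\) satisfies \(|\langle r_j,v\rangle|=|r_j|\) for every \(j\). By the equality case of Cauchy--Schwarz, this means every nonzero \(r_j\) is a multiple of \(v\).

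It remains to check the equivalences among three conditions. If all nonzero rows lie on a common line \(\R v\), then \(r_j=c_jv\) and \(Aw=\langle v,w\rangle(c_1,\ldots,c_n)\), so the image has dimension at most one. Conversely, suppose \(\dim A(\R^2)\le1\). If \(A=0\) there is nothing to prove, and any unit \(v\) works. Otherwise the rank is one, so the kernel is a line \(\R u^\perp\) for some unit \(u\). Every row is orthogonal to \(\ker A\), since \(\langle r_j,w\rangle=0\) for all \(w\in\ker A\). Thus every row lies in \(\R u\), the rows are collinear, and \(v=u\) is a common maximizing direction.

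There is no real obstacle. The only care needed is the degenerate case where some or all rows vanish. It is handled by the phrase ``nonzero row vectors'' and by letting any unit vector serve when \(A=0\).
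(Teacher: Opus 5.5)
Your proof is correct and takes the same route as the paper. You specialize Lemma~\ref{lem:block-operator} to \(E_j=\R\), identify \(\|A_j\|=|r_j|\), and use the equality case of Cauchy--Schwarz to turn the common maximizing direction into collinearity of the nonzero rows. Your kernel argument for the equivalence with \(\dim A(\R^2)\le 1\) spells out a step the paper leaves implicit.
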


\begin{proof}
This is Lemma~\ref{lem:block-operator} with \(E_j=\R\).  A unit vector
maximizes the linear functional determined by a nonzero row precisely when
it is parallel or antiparallel to that row.  Hence a common maximizing line
exists exactly when the nonzero rows are collinear.
\end{proof}
\section{The interval-valued extremal problem}\label{sec:interval}

The scalar interval problem provides the coordinate estimates for rectangles
and boxes. Among boundary functions with prescribed mean and values in an
interval, the largest value of
\(\int_\T U(e^{it})\cos t\dm\) is obtained by placing the upper endpoint
where \(\cos t\) is largest. The same comparison also determines the
maximizing boundary function uniquely.

\begin{lemma}\label{lem:rearrangement}
Let \(0<q<1\). Then
\begin{equation}\label{eq:rearrangement}
 \sup\left\{\int_\T U(e^{it})\cos t\dm:
 U\in L^\infty(\T),\ 0\leq U\leq1\ \text{a.e.},
 \int_\T U\dm=q\right\}=\frac{\sin(\pi q)}{\pi}.
\end{equation}
The supremum is attained by
\[
 U_q(e^{it})=\chi_{[-\pi q,\pi q]}(t),
 \qquad -\pi<t\leq\pi.
\]
Equality holds if and only if \(U=U_q\) almost everywhere. After replacing \(\cos t\) by \(\cos(t-\theta)\), the maximizing function is the characteristic function of the corresponding rotated arc.
\end{lemma}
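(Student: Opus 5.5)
The plan is the standard bathtub (Neyman--Pearson) comparison. Set \(E=[-\pi q,\pi q]\) (as an arc of \(\T\)) and \(c=\cos(\pi q)\). First check that \(U_q\) is admissible and compute its value: \(\int_\T U_q\dm=\frac{2\pi q}{2\pi}=q\), and
\[
 \int_\T U_q(e^{it})\cos t\dm=\frac1{2\pi}\int_{-\pi q}^{\pi q}\cos t\,dt=\frac{\sin(\pi q)}{\pi}.
\]
This gives the lower bound for the supremum.

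For the upper bound and uniqueness, I will take any admissible \(U\) and use the pointwise inequality
\[
 \bigl(U_q(e^{it})-U(e^{it})\bigr)\bigl(\cos t-c\bigr)\ \ge\ 0 \qquad\text{for a.e. } t.
\]
On \(E\) we have \(\cos t\ge c\) and \(U_q-U=1-U\ge0\). Off \(E\) we have \(\cos t\le c\) and \(U_q-U=-U\le0\). Integrating, and using that \(\int_\T (U_q-U)\dm=q-q=0\) kills the constant \(c\), gives
\[
 \int_\T U_q\cos t\dm-\int_\T U\cos t\dm=\int_\T (U_q-U)(\cos t-c)\dm\ \ge\ 0.
\]
This proves \eqref{eq:rearrangement}.

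For the equality case, equality forces the nonnegative integrand to vanish almost everywhere. Since \(0<q<1\), the level set \(\{\cos t=c\}\) consists of the two points \(t=\pm\pi q\), so it has measure zero. Hence \(U=U_q\) almost everywhere. The only point that needs care is this strictness: the measure-zero level set is exactly what turns the inequality into uniqueness. No constant \(U\) needs separate treatment, since the argument covers every admissible \(U\).

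The rotated statement follows by the substitution \(t\mapsto t+\theta\). This substitution preserves \(m\), the bounds \(0\le U\le 1\), and the mean, and it converts \(\cos(t-\theta)\) into \(\cos t\). The maximizer is therefore the characteristic function of \([\theta-\pi q,\theta+\pi q]\), and it is unique in the same sense.
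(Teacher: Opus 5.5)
Your proposal is correct and follows the paper's proof essentially step for step. The paper also subtracts the threshold \(\gamma=\cos(\pi q)\) using the zero-mean identity \(\int_\T(U-\chi_{E_q})\dm=0\), does the same sign analysis on the arc and its complement, and derives uniqueness from the strict sign of \(\cos t-\gamma\) away from the two endpoints \(t=\pm\pi q\).
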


\begin{proof}
We set \(E_q=\{e^{it}: |t|\leq \pi q\}\) and \(\gamma=\cos(\pi q)\). Then \(m(E_q)=q\) and
\[
 \int_{E_q}\cos t\dm=\frac{1}{2\pi}\int_{-\pi q}^{\pi q}\cos t\,dt
 =\frac{\sin(\pi q)}{\pi}.
\]
For every \(U\) satisfying the constraints in
\eqref{eq:rearrangement}, we have
\(\int_\T(U-\chi_{E_q})\dm=0\), and therefore
\[
 \int_\T (U-\chi_{E_q})\cos t\dm
 =\int_\T (U-\chi_{E_q})(\cos t-\gamma)\dm.
\]
On \(E_q\), we have \(U-1\leq0\) and \(\cos t-\gamma\geq0\). On
\(\T\setminus E_q\), we have \(U\geq0\) and \(\cos t-\gamma\leq0\). Hence
the last integral is non-positive, proving \eqref{eq:rearrangement}. Since
\(\cos t-\gamma\) has a strict sign away from the two endpoints of the arc,
equality forces \(U=1\) almost everywhere on \(E_q\) and \(U=0\) almost
everywhere on \(\T\setminus E_q\).
\end{proof}

The estimate in the following theorem is Chen's sharp Schwarz lemma for real
harmonic functions \cite{Chen2013}, written for an arbitrary interval. The
boundary proof below gives the equality characterization in the form needed
for the product constructions. The strip proof is recalled in
Section~\ref{sec:strip-product}.

\begin{theorem}\label{thm:interval}
Let \(a<b\), let \(x\in(a,b)\), and let \(u:\D\to(a,b)\) be harmonic with \(u(0)=x\). Then
\begin{equation}\label{eq:interval-estimate}
 |\nabla u(0)|\leq
 \frac{2(b-a)}{\pi}\sin\frac{\pi(x-a)}{b-a}.
\end{equation}
The estimate is sharp for every \(x\in(a,b)\). Equivalently,
\begin{equation}\label{eq:interval-cos-form}
 |\nabla u(0)|\leq
 \frac{2(b-a)}{\pi}
 \cos\frac{\pi(2x-a-b)}{2(b-a)}.
\end{equation}
If \(U\) denotes the radial boundary function of \(u\) and
\(q=(x-a)/(b-a)\), then equality holds in
\eqref{eq:interval-estimate} if and only if, up to a set of measure zero,
\[
 \frac{U-a}{b-a}=\chi_I,
\]
where \(I\subset\T\) is an arc of angular length \(2\pi q\).
\end{theorem}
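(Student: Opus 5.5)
The plan is to normalize to the unit interval and then reduce everything to Lemma~\ref{lem:rearrangement} through the boundary formula \eqref{eq:real-poisson-grad}.

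\emph{Normalization.} Put \(v=(u-a)/(b-a)\). This is harmonic, maps \(\D\) into \((0,1)\), and satisfies \(v(0)=q\in(0,1)\) and \(\nabla u(0)=(b-a)\nabla v(0)\). Its boundary function is \(V=(U-a)/(b-a)\). By Lemma~\ref{lem:real-poisson},
\[
0\le V\le 1 \ \text{a.e.},\qquad \int_\T V\dm=q,\qquad
|\nabla v(0)|=2\sup_\theta\int_\T V(e^{it})\cos(t-\theta)\dm .
\]

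\emph{Upper bound.} Fix \(\theta\). The rotated function \(V(e^{i(t+\theta)})\) satisfies the same constraints, so Lemma~\ref{lem:rearrangement} bounds each integral by \(\sin(\pi q)/\pi\). Hence
\[
|\nabla v(0)|\le \frac{2\sin(\pi q)}{\pi},
\]
which after rescaling is \eqref{eq:interval-estimate}. The cosine form \eqref{eq:interval-cos-form} follows from \(\sin(\pi q)=\cos(\pi/2-\pi q)\) together with \(\pi/2-\pi q=-\pi(2x-a-b)/(2(b-a))\) and evenness of cosine.

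\emph{Sharpness and the converse.} Take \(V=\chi_I\) for an arc \(I\) of length \(2\pi q\). Since \(0<q<1\), this \(V\) is not a.e. constant, so Lemma~\ref{lem:real-poisson} gives \(P[V](\D)\subset(0,1)\). Its mean is \(q\). Choosing \(\theta\) to be the midpoint of \(I\), the rotated case of Lemma~\ref{lem:rearrangement} shows the supremum equals \(\sin(\pi q)/\pi\). Thus \(u=a+(b-a)P[\chi_I]\) attains equality, which gives both sharpness for every \(x\) and the "if" direction.

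\emph{The "only if" direction.} This is the only step needing care, because of the supremum over \(\theta\).

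1. The function \(\theta\mapsto\int_\T V\cos(t-\theta)\dm=\re\bigl(e^{-i\theta}\int_\T V e^{it}\dm\bigr)\) is continuous and \(2\pi\)-periodic, so the supremum is attained at some \(\theta_0\).
2. Equality in the theorem then means \(\int_\T V\cos(t-\theta_0)\dm=\sin(\pi q)/\pi\).
3. The equality clause of Lemma~\ref{lem:rearrangement}, applied to the rotated function, forces \(V=\chi_I\) a.e., where \(I\) is the arc of length \(2\pi q\) centered at \(e^{i\theta_0}\).

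Translating back through \(V=(U-a)/(b-a)\) gives the stated characterization.
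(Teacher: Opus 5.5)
Your proposal is correct and follows essentially the same route as the paper: normalize to \((0,1)\), bound each rotated directional integral by Lemma~\ref{lem:rearrangement}, use a centered arc for sharpness, and for the converse apply the equality clause of Lemma~\ref{lem:rearrangement} in a direction attaining the supremum. The only difference is cosmetic: the paper takes that direction to be the direction of \(\nabla u(0)\) (after noting \(\nabla u(0)\neq0\)), whereas you obtain it from continuity of \(\theta\mapsto\int_\T V\cos(t-\theta)\dm\), and the two choices are equivalent.
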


\begin{proof}
We set \(q=(x-a)/(b-a)\in(0,1)\) and write \(u=P[U]\), where
\(a\leq U\leq b\) a.e. and \(\int_\T U\dm=x\). We then set
\(V=(U-a)/(b-a)\). Thus \(0\leq V\leq1\) and
\(\int_\T V\dm=q\). By Lemma~\ref{lem:real-poisson}, for each direction
\(\theta\),
\[
 D_\theta u(0)=2(b-a)\int_\T V(e^{it})\cos(t-\theta)\dm.
\]
By applying Lemma~\ref{lem:rearrangement} after rotation, we obtain 
\[
 D_\theta u(0)\leq \frac{2(b-a)}{\pi}\sin(\pi q).
\]
The same estimate in the opposite direction \(\theta+\pi\) gives
the corresponding lower bound. Therefore
\[
 |D_\theta u(0)|\leq \frac{2(b-a)}{\pi}\sin(\pi q).
\]
Taking the supremum over \(\theta\) proves \eqref{eq:interval-estimate}. The form \eqref{eq:interval-cos-form} follows from
\[
 \sin\frac{\pi(x-a)}{b-a}
 =\cos\frac{\pi(2x-a-b)}{2(b-a)}.
\]
Sharpness follows by taking
\[
 U_x(e^{it})=a+(b-a)\chi_{[-\pi q,\pi q]}(t)
\]
and setting \(u_x=P[U_x]\). Since \(0<q<1\), the boundary function is not
essentially constant, so the maximum principle gives
\(u_x(\D)\subset(a,b)\). Lemmas~\ref{lem:rearrangement}
and~\ref{lem:real-poisson} give equality in
\eqref{eq:interval-estimate}.

Conversely, if equality holds in
\eqref{eq:interval-estimate}, then, since \(x\in(a,b)\), the right-hand side
of \eqref{eq:interval-estimate} is positive and
\(\nabla u(0)\neq0\). We choose \(\theta\) in the direction of
\(\nabla u(0)\), so that
\(D_\theta u(0)=|\nabla u(0)|\). Equality must then hold in
Lemma~\ref{lem:rearrangement}, applied after rotation. Consequently, up to a
null set, the normalized boundary function is the characteristic function of
an arc of angular length \(2\pi q\). Conversely, every such boundary function
gives equality. This completes the characterization.
\end{proof}

For the symmetric interval \((-1,1)\), the estimate takes the following
particularly simple form.

\begin{remark}\label{rem:symmetric-interval}
Theorem~\ref{thm:interval} gives the sharp estimate
\[
 |\nabla u(0)|\leq \frac4\pi\cos\frac{\pi u(0)}2,
 \qquad u:\D\to(-1,1)\text{ harmonic}.
\]
This is precisely the form obtained from the strip metric in
Section~\ref{sec:strip-product}.
\end{remark}

\section{Rectangle-valued harmonic mappings}\label{sec:rectangle}

For a rectangle, the coordinate constraints separate, but the operator norm
of the full differential does not: it is not the sum of the two component
gradient bounds. The geometry of the two rows of the differential is the only
additional ingredient.

For the rectangle
\[
 R=(a,b)\times(c,d),\qquad a<b,
 \qquad c<d,
\]
and a point \(p=p_1+ip_2\in R\), we define
\begin{equation}\label{eq:AR-def}
 A_R(p_1)=\frac{2(b-a)}{\pi}\sin\frac{\pi(p_1-a)}{b-a},
\end{equation}
\begin{equation}\label{eq:BR-def}
 B_R(p_2)=\frac{2(d-c)}{\pi}\sin\frac{\pi(p_2-c)}{d-c},
\end{equation}
and
\begin{equation}\label{eq:MR-def}
 M_R(p)=\bigl(A_R(p_1)^2+B_R(p_2)^2\bigr)^{1/2}.
\end{equation}

The exact rectangle estimate is as follows.

\begin{theorem}\label{thm:rectangle}
Let \(R=(a,b)\times(c,d)\) be an open rectangle, let \(p=p_1+ip_2\in R\), and let \(f:\D\to R\) be harmonic with \(f(0)=p\). Then
\begin{equation}\label{eq:rectangle-main-bound}
 |f_z(0)|+|f_{\overline z}(0)|\leq M_R(p).
\end{equation}
The estimate is sharp for every \(p\in R\). Equivalently,
\begin{equation}\label{eq:rectangle-sup}
 \sup\left\{
 |f_z(0)|+|f_{\overline z}(0)|:
 f:\D\to R\ \text{harmonic},\ f(0)=p
 \right\}=M_R(p).
\end{equation}
\end{theorem}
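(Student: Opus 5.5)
The plan is to reduce the statement to the scalar interval theorem, Theorem~\ref{thm:interval}, applied to each coordinate, and to combine the two gradient bounds using Lemmas~\ref{lem:operator-norm} and~\ref{lem:operator-HS}.

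\textbf{Upper bound.} Write \(f=u+iv\). Then \(u:\D\to(a,b)\) and \(v:\D\to(c,d)\) are real harmonic with \(u(0)=p_1\) and \(v(0)=p_2\). By Lemma~\ref{lem:operator-norm}, the left-hand side of \eqref{eq:rectangle-main-bound} equals \(\|df_0\|\). The matrix of \(df_0\) has rows \(\nabla u(0)\) and \(\nabla v(0)\). Lemma~\ref{lem:operator-HS} therefore gives
\[
\|df_0\|^2\le|\nabla u(0)|^2+|\nabla v(0)|^2 .
\]
Theorem~\ref{thm:interval} then bounds the two terms by \(A_R(p_1)^2\) and \(B_R(p_2)^2\), respectively. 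This proves \eqref{eq:rectangle-main-bound}.

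\textbf{Sharpness.} It remains to realize equality simultaneously in both scalar estimates and in the operator-norm step. The equality clause of Lemma~\ref{lem:operator-HS} requires the two rows to be collinear. So I will take both extremal boundary functions to be arcs centered at the same point.

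Set \(q_1=(p_1-a)/(b-a)\) and \(q_2=(p_2-c)/(d-c)\). Define
\[
U=a+(b-a)\chi_{[-\pi q_1,\pi q_1]},\qquad V=c+(d-c)\chi_{[-\pi q_2,\pi q_2]},
\]
and let \(f=P[U]+iP[V]\). By Lemma~\ref{lem:real-poisson}, \(f\) maps \(\D\) into \(R\), since neither boundary function is essentially constant, and \(f(0)=p\). Both arcs are symmetric about \(t=0\). By \eqref{eq:real-poisson-data}, the \(y\)-derivatives therefore vanish, so both gradients point along the positive real axis. Their lengths are \(A_R(p_1)\) and \(B_R(p_2)\), by the equality case of Lemma~\ref{lem:rearrangement} with \(\theta=0\).

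Hence \(df_0(1)=A_R(p_1)+iB_R(p_2)\), and \(\|df_0\|\ge M_R(p)\). Combined with the upper bound, this gives equality and \eqref{eq:rectangle-sup}.

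The only delicate point is the alignment in the sharpness step: independent scalar extremals with arbitrarily placed arcs would lose the operator-norm equality. Choosing a common center for the arcs handles this directly, so I expect no real obstacle.
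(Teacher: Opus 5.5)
Your proposal is correct and follows the paper's proof essentially step for step. You get the upper bound from Lemma~\ref{lem:operator-HS} and Theorem~\ref{thm:interval}, and sharpness from interval extremals whose arcs share a common center. The only cosmetic difference is that you evaluate \(df_0(1)=A_R(p_1)+iB_R(p_2)\) directly, whereas the paper cites the equality case of Lemma~\ref{lem:operator-HS} for parallel nonzero gradients.
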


\begin{proof}
We write \(f=u+iv\). Then \(u:\D\to(a,b)\), \(v:\D\to(c,d)\), \(u(0)=p_1\), and \(v(0)=p_2\). By Theorem~\ref{thm:interval},
\begin{equation}\label{eq:component-u}
 |\nabla u(0)|\leq A_R(p_1),
\end{equation}
and
\begin{equation}\label{eq:component-v}
 |\nabla v(0)|\leq B_R(p_2).
\end{equation}
The matrix of the real differential \(df_0:\R^2\to\R^2\) has rows
\(\nabla u(0)\) and \(\nabla v(0)\). Lemma~\ref{lem:operator-HS} gives
\[
 \|df_0\|^2\leq |\nabla u(0)|^2+|\nabla v(0)|^2.
\]
Using \eqref{eq:component-u} and \eqref{eq:component-v},
\[
 \|df_0\|\leq
 \bigl(A_R(p_1)^2+B_R(p_2)^2\bigr)^{1/2}=M_R(p).
\]
By Lemma~\ref{lem:operator-norm}, this is \eqref{eq:rectangle-main-bound}.

Sharpness follows from an explicit extremal construction. We set
\[
 q_1=\frac{p_1-a}{b-a},
 \qquad
 q_2=\frac{p_2-c}{d-c}.
\]
We define the boundary functions on \((-\pi,\pi]\) by
\[
 U(e^{it})=a+(b-a)\chi_{[-\pi q_1,\pi q_1]}(t),
\]
and
\[
 V(e^{it})=c+(d-c)\chi_{[-\pi q_2,\pi q_2]}(t).
\]
We set \(u=P[U]\), \(v=P[V]\), and \(f=u+iv\). Then \(f(\D)\subset R\), \(f(0)=p\), and Theorem~\ref{thm:interval} gives
\[
 |\nabla u(0)|=A_R(p_1),
 \qquad
 |\nabla v(0)|=B_R(p_2).
\]
The two gradients are nonzero because \(q_1,q_2\in(0,1)\), and they are
parallel since the two arcs are centered at the same point of \(\T\).
Hence Lemma~\ref{lem:operator-HS} shows that the operator norm equals the
Hilbert-Schmidt norm. Therefore
\[
 \|df_0\|=\bigl(|\nabla u(0)|^2+|\nabla v(0)|^2\bigr)^{1/2}=M_R(p).
\]
Lemma~\ref{lem:operator-norm} gives equality in \eqref{eq:rectangle-main-bound}.
\end{proof}

The formula is particularly simple for the unit square.

\begin{corollary}\label{cor:unit-square}
Let \(Q=(0,1)^2\), and let \(p=p_1+ip_2\in Q\). Then
\[
 \sup\left\{
 |f_z(0)|+|f_{\overline z}(0)|:
 f:\D\to Q\ \text{harmonic},\ f(0)=p
 \right\}
 =\frac2\pi\bigl(\sin^2(\pi p_1)+\sin^2(\pi p_2)\bigr)^{1/2}.
\]
\end{corollary}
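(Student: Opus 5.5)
The corollary is Theorem~\ref{thm:rectangle} specialized to \(a=c=0\), \(b=d=1\); the plan is simply to substitute these values into \eqref{eq:AR-def}--\eqref{eq:MR-def} and then invoke \eqref{eq:rectangle-sup}.

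With \(b-a=d-c=1\), the definitions give
\[
 A_R(p_1)=\frac{2}{\pi}\sin(\pi p_1),
 \qquad
 B_R(p_2)=\frac{2}{\pi}\sin(\pi p_2).
\]
Substituting into \eqref{eq:MR-def} gives
\[
 M_R(p)=\Bigl(\tfrac{4}{\pi^2}\sin^2(\pi p_1)+\tfrac{4}{\pi^2}\sin^2(\pi p_2)\Bigr)^{1/2}
 =\frac{2}{\pi}\bigl(\sin^2(\pi p_1)+\sin^2(\pi p_2)\bigr)^{1/2}.
\]
By \eqref{eq:rectangle-sup}, this is exactly the stated supremum. The quantity \(|f_z(0)|+|f_{\overline z}(0)|\) coincides with \(\|df_0\|\) by Lemma~\ref{lem:operator-norm}, so the formula can be read either way.

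Sharpness is already contained in Theorem~\ref{thm:rectangle}: the supremum is attained. One extremal takes the boundary values to be the characteristic functions of the arcs \([-\pi p_1,\pi p_1]\) and \([-\pi p_2,\pi p_2]\), both centered at the same point \(1\in\T\). The resulting gradients of the real and imaginary parts are then parallel, and the operator norm equals the Hilbert--Schmidt norm. There is no real obstacle here; the only care needed is tracking the factor \(2/\pi\) through the substitution.
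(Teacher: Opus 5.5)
Your proposal is correct and is exactly the paper's argument: the corollary is obtained by setting \(a=c=0\), \(b=d=1\) in Theorem~\ref{thm:rectangle}, and your substitution \(A_R(p_1)=\frac{2}{\pi}\sin(\pi p_1)\), \(B_R(p_2)=\frac{2}{\pi}\sin(\pi p_2)\) is right. Your extremal, with arcs \([-\pi p_1,\pi p_1]\) and \([-\pi p_2,\pi p_2]\) centered at \(1\in\T\), is the same sharpness construction used in the theorem's proof (here \(q_j=p_j\)).
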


\begin{proof}
This is Theorem~\ref{thm:rectangle} with \(a=c=0\) and \(b=d=1\).
\end{proof}

At the center of the rectangle, both scalar factors attain their largest
values.

\begin{corollary}\label{cor:center-rect}
Let \(p_0=(a+b)/2+i(c+d)/2\) be the center of \(R=(a,b)\times(c,d)\). Then
\[
 \sup\left\{
 |f_z(0)|+|f_{\overline z}(0)|:
 f:\D\to R\ \text{harmonic},\ f(0)=p_0
 \right\}
 =\frac2\pi\bigl((b-a)^2+(d-c)^2\bigr)^{1/2}.
\]
In particular, for the center of the unit square the exact value is
\(2\sqrt2/\pi\).
\end{corollary}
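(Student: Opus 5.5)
The plan is to specialize Theorem~\ref{thm:rectangle} to the center and evaluate the right-hand side. At \(p_0=(a+b)/2+i(c+d)/2\) we have
\[
q_1=\frac{p_1-a}{b-a}=\frac12,
\qquad
q_2=\frac{p_2-c}{d-c}=\frac12 .
\]
Hence \(\sin(\pi q_1)=\sin(\pi q_2)=1\). The definitions \eqref{eq:AR-def} and \eqref{eq:BR-def} then give
\[
A_R(p_1)=\frac{2(b-a)}{\pi},
\qquad
B_R(p_2)=\frac{2(d-c)}{\pi}.
\]

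Substituting these into \eqref{eq:MR-def}, we get
\[
M_R(p_0)=\frac2\pi\bigl((b-a)^2+(d-c)^2\bigr)^{1/2}.
\]
The identity \eqref{eq:rectangle-sup} then gives the claimed supremum. The sharpness part of Theorem~\ref{thm:rectangle} already shows that the value is attained. The extremal map uses two half-circle arcs centered at the same point of \(\T\).

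For the unit square we take \(a=c=0\) and \(b=d=1\). This yields \(\frac2\pi\sqrt{2}=2\sqrt2/\pi\), which is consistent with Corollary~\ref{cor:unit-square} at \(p=(1/2,1/2)\).

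There is no real obstacle here. The only point needing care is the observation that \(\sin\) attains its maximum value \(1\) exactly at \(q=1/2\). This justifies the remark that both scalar factors are maximal at the center, though it is not needed for the stated equality itself.
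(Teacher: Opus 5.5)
Your proposal is correct and follows the paper's own proof: at the center both normalized positions equal \(1/2\), so both sine factors are \(1\), and substituting into \(M_R\) from Theorem~\ref{thm:rectangle} gives the formula. Your remark that the supremum is attained by half-circle arcs sharing a common center is also correct.
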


\begin{proof}
At the center, both sine factors in \eqref{eq:AR-def} and \eqref{eq:BR-def} are equal to \(1\). The statement follows from Theorem~\ref{thm:rectangle}.
\end{proof}

The formula also gives the exact limiting behavior as the prescribed point
approaches the boundary.

\begin{corollary}\label{cor:boundary-rect}
Let \(p=p_1+ip_2\in R\). If \(p_1\to a^+\) or \(p_1\to b^-\), while \(p_2\) remains fixed in \((c,d)\), then
\[
 M_R(p)\to B_R(p_2).
\]
Similarly, if \(p_2\to c^+\) or \(p_2\to d^-\), while \(p_1\) remains fixed in \((a,b)\), then
\[
 M_R(p)\to A_R(p_1).
\]
In particular, \(M_R(p)\to0\) as \(p\) tends to a vertex of \(R\).
\end{corollary}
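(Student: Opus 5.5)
The corollary follows directly from the explicit formula \eqref{eq:MR-def}, \(M_R(p)^2=A_R(p_1)^2+B_R(p_2)^2\), together with the continuity of the sine factors in \eqref{eq:AR-def} and \eqref{eq:BR-def}. No further extremal analysis is needed: Theorem~\ref{thm:rectangle} identifies the supremum with \(M_R(p)\), so the whole question is the limiting behaviour of an elementary expression.

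First we treat \(p_1\to a^+\) with \(p_2\) fixed. Then \(\pi(p_1-a)/(b-a)\to0\), so \(A_R(p_1)\to0\). As \(p_1\to b^-\), the argument of the sine tends to \(\pi\), so again \(A_R(p_1)\to0\). Since \(B_R(p_2)\) does not depend on \(p_1\), continuity of the square root gives
\[
 M_R(p)\to\bigl(0+B_R(p_2)^2\bigr)^{1/2}=B_R(p_2),
\]
where we use \(B_R(p_2)>0\) for \(p_2\in(c,d)\). The statement for \(p_2\to c^+\) or \(p_2\to d^-\) is the same argument with the roles of the two coordinates exchanged, and yields the limit \(A_R(p_1)\).

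For the vertex statement, suppose \(p\to\) a vertex such as \((a,c)\) along an arbitrary path in \(R\). Then simultaneously \(p_1\to a\) or \(b\) and \(p_2\to c\) or \(d\), so each sine factor tends to \(0\), because \(\sin s\to0\) as \(s\to0\) or \(s\to\pi\). Hence \(A_R(p_1)\to0\) and \(B_R(p_2)\to0\) jointly, and therefore \(M_R(p)\to0\). The only point needing care is that this is a joint limit rather than an iterated one. That is immediate here, because each factor depends on a single coordinate and tends to zero regardless of how the other coordinate moves, so there is no real obstacle.
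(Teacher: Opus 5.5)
Your proof is correct and matches the paper's argument: both use the continuous extension of \(A_R\) and \(B_R\) to the closed intervals, vanishing at the endpoints, after which everything follows from \(M_R(p)^2=A_R(p_1)^2+B_R(p_2)^2\). A small remark: the positivity \(B_R(p_2)>0\) is not actually needed, since nonnegativity already gives \(\bigl(B_R(p_2)^2\bigr)^{1/2}=B_R(p_2)\).
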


\begin{proof}
The functions \(A_R\) and \(B_R\) extend continuously to the closed intervals, with \(A_R(a)=A_R(b)=0\) and \(B_R(c)=B_R(d)=0\). The assertions follow from \eqref{eq:MR-def}.
\end{proof}

\section{Extremal boundary functions for rectangles}\label{sec:rectangle-extremals}

The proof of Theorem~\ref{thm:rectangle} constructs extremal maps.
We now characterize all maps that attain equality by combining the interval
equality cases with the collinearity condition for the component gradients.

We set \(q_1=(p_1-a)/(b-a)\) and \(q_2=(p_2-c)/(d-c)\). For \(q\in(0,1)\) and \(\theta\in\R\), we use
\[
 I_{q,\theta}
 =\left\{e^{it}\in\T:
 \left|\operatorname{Arg}\bigl(e^{i(t-\theta)}\bigr)\right|\leq\pi q
 \right\},
\]
where \(\operatorname{Arg}\) takes values in \((-\pi,\pi]\). Thus
\(I_{q,\theta}\) is the closed arc of angular length \(2\pi q\) centered at
\(e^{i\theta}\). For each \(\theta\), we define
\begin{equation}\label{eq:rect-ext-boundary}
 \Phi_{p,\theta}(e^{it})=
 a+(b-a)\chi_{I_{q_1,\theta}}(e^{it})
 +i\bigl(c+(d-c)\chi_{I_{q_2,\theta}}(e^{it})\bigr).
\end{equation}
The Poisson integral \(P[\Phi_{p,\theta}]\) maps \(\D\) into \(R\), has
value \(p\) at the origin, and attains equality in
Theorem~\ref{thm:rectangle}. Using the same center \(\theta\) makes the two
component gradients point in the same direction; choosing opposite centers
makes them point in opposite directions along the same line.

\begin{proposition}\label{prop:rect-equality}
Suppose \(f=u+iv:\D\to R\) is harmonic, \(f(0)=p=p_1+ip_2\), and \(f\) attains equality in Theorem~\ref{thm:rectangle}. If \(U\) and \(V\) are the radial boundary values of \(u\) and \(v\), then, up to null sets, the normalized boundary functions
\[
 \frac{U-a}{b-a},\qquad \frac{V-c}{d-c}
\]
are characteristic functions of arcs of lengths \(2\pi q_1\) and
\(2\pi q_2\), respectively. Moreover, the centers of the two arcs determine
the same unoriented line through the origin in the parameter plane \(\R^2\);
equivalently, their directions differ by \(0\) or \(\pi\) modulo \(2\pi\).
Conversely, every such choice gives an extremal mapping.
\end{proposition}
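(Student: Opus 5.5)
The plan is to run the proof of Theorem~\ref{thm:rectangle} backwards and force equality at each inequality. Suppose \(f=u+iv\) attains equality. By Lemma~\ref{lem:operator-norm}, \(\|df_0\|=M_R(p)\).

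The chain in that proof reads
\[
 \|df_0\|^2\leq|\nabla u(0)|^2+|\nabla v(0)|^2\leq A_R(p_1)^2+B_R(p_2)^2 .
\]
Equality at the right end forces equality in both inequalities.

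\emph{Step 1: each component is extremal.} The second inequality is a sum of two termwise inequalities, \eqref{eq:component-u} and \eqref{eq:component-v}, each bounding a nonnegative quantity. Equality of the sums therefore forces \(|\nabla u(0)|=A_R(p_1)\) and \(|\nabla v(0)|=B_R(p_2)\). The equality clause of Theorem~\ref{thm:interval} then shows that \((U-a)/(b-a)=\chi_{I_1}\) and \((V-c)/(d-c)=\chi_{I_2}\) a.e. Here \(I_1,I_2\) are arcs of lengths \(2\pi q_1\) and \(2\pi q_2\), with centers \(e^{i\theta_1}\) and \(e^{i\theta_2}\).

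\emph{Step 2: identify the gradient directions.} I would compute \(\nabla u(0)\) for \(U=a+(b-a)\chi_{I_{q_1,\theta_1}}\) directly from \eqref{eq:real-poisson-data}. The constant \(a\) contributes nothing. The arc contributes
\[
 2(b-a)\int_{I_{q_1,\theta_1}}(\cos t,\sin t)\dm=\frac{2(b-a)}{\pi}\sin(\pi q_1)\,(\cos\theta_1,\sin\theta_1).
\]
So \(\nabla u(0)\) is a positive multiple of \(e^{i\theta_1}\). It is nonzero because \(0<q_1<1\). Likewise \(\nabla v(0)\) is a positive multiple of \(e^{i\theta_2}\).

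\emph{Step 3: collinearity.} Equality in the first inequality, via Lemma~\ref{lem:operator-HS}, says the rows \(\nabla u(0)\) and \(\nabla v(0)\) of \(df_0\) are collinear, since both are nonzero. Hence \(e^{i\theta_1}\) and \(e^{i\theta_2}\) span the same line, so \(\theta_1-\theta_2\in\{0,\pi\}\) modulo \(2\pi\).

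\emph{Step 4: the converse.} Given arcs with those lengths and centers satisfying the line condition, the boundary functions take values in \([a,b]\) and \([c,d]\) and are not a.e. constant. Lemma~\ref{lem:real-poisson} then gives \(f(\D)\subset R\), and the means give \(f(0)=p\). By Theorem~\ref{thm:interval}, each gradient attains its bound. By Step 2, the gradients are collinear, so Lemma~\ref{lem:operator-HS} gives \(\|df_0\|^2=A_R(p_1)^2+B_R(p_2)^2\). Lemma~\ref{lem:operator-norm} then converts this into equality in \eqref{eq:rectangle-main-bound}.

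The only real computation is Step 2, where the gradient direction is read off from the arc center. The rest is bookkeeping of equality cases. One point needs care: a rotation of the arc by \(\pi\) is allowed, because Lemma~\ref{lem:operator-HS} permits antiparallel rows.
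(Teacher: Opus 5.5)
Your proposal is correct and follows the paper's proof essentially step for step. You force equality in both links of the chain, invoke the equality case of Theorem~\ref{thm:interval} and Lemma~\ref{lem:operator-HS}, and read the line condition off the collinear gradients; your explicit computation of \(\nabla u(0)\) as a positive multiple of the arc center, and your direct check of the antiparallel case in the converse, simply spell out what the paper leaves implicit (it cites only the same-center sharpness construction for the converse).
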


\begin{proof}
Equality in Theorem~\ref{thm:rectangle} means that equality holds in the chain
\[
 \|df_0\|^2\leq |\nabla u(0)|^2+|\nabla v(0)|^2
 \leq A_R(p_1)^2+B_R(p_2)^2.
\]
Thus equality holds in both component estimates, since the two deficits are
nonnegative and have sum zero. It also holds in the comparison of the
operator norm with the Hilbert-Schmidt norm. Since \(p\in R\), both best
component constants are positive. Lemma~\ref{lem:operator-HS} now shows that
the row vectors \(\nabla u(0)\) and \(\nabla v(0)\) are linearly dependent.

By the equality statement in Theorem~\ref{thm:interval}, each normalized
boundary function must be the characteristic function of an arc of the
prescribed length. Linear dependence of \(\nabla u(0)\) and
\(\nabla v(0)\) says exactly that the corresponding directions determine the
same unoriented line through the origin. The converse is the sharpness
construction in the proof of Theorem~\ref{thm:rectangle}.
\end{proof}

The boundary description also shows which points of the rectangle occur
almost everywhere.

\begin{remark}\label{rem:rectangle-boundary-values}
The extremal boundary functions in \eqref{eq:rect-ext-boundary} take values in the vertex set of the rectangle almost everywhere. Each real component takes only the two endpoint values of its interval, although not every vertex need occur. This contrasts with the disk, where a nonzero real linear functional is maximized at a unique point of the smooth boundary.
\end{remark}

\section{Boxes and hypercubes}\label{sec:boxes}

The rectangle argument extends directly to harmonic maps with values in a box
in \(\R^n\). The resulting square-root formula is a consequence of the
Euclidean operator norm and is not specific to dimension two.

For \(a_j<b_j\), \(j=1,\dots,n\), we set
\[
 Q=\prod_{j=1}^n(a_j,b_j)\subset\R^n.
\]
For \(p=(p_1,\dots,p_n)\in Q\), we define
\begin{equation}\label{eq:Aj-def}
 A_j(p_j)=\frac{2(b_j-a_j)}{\pi}
 \sin\frac{\pi(p_j-a_j)}{b_j-a_j},
 \qquad j=1,\dots,n,
\end{equation}
and
\begin{equation}\label{eq:MQ-def}
 M_Q(p)=\left(\sum_{j=1}^n A_j(p_j)^2\right)^{1/2}.
\end{equation}
If \(F=(f_1,\dots,f_n):\D\to Q\) is harmonic componentwise, \(\|dF_0\|\) denotes the operator norm of \(dF_0:\R^2\to\R^n\).

The same argument gives the exact estimate for an arbitrary box.

\begin{theorem}\label{thm:box}
Let \(Q=\prod_{j=1}^n(a_j,b_j)\subset\R^n\) be an open box, let \(p\in Q\), and let \(F:\D\to Q\) be harmonic with \(F(0)=p\). Then
\begin{equation}\label{eq:box-bound}
 \|dF_0\|\leq M_Q(p).
\end{equation}
The estimate is sharp for every \(p\in Q\).
\end{theorem}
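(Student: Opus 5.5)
The proof will follow the rectangle case, Theorem~\ref{thm:rectangle}, with the two rows replaced by \(n\) rows.

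For the upper bound, write \(F=(f_1,\dots,f_n)\). Each \(f_j:\D\to(a_j,b_j)\) is real harmonic with \(f_j(0)=p_j\). Theorem~\ref{thm:interval} then gives
\[
 |\nabla f_j(0)|\leq A_j(p_j), \qquad j=1,\dots,n.
\]
In orthonormal coordinates, the matrix of \(dF_0:\R^2\to\R^n\) has rows \(\nabla f_1(0),\dots,\nabla f_n(0)\). Lemma~\ref{lem:operator-HS} therefore yields
\[
 \|dF_0\|^2\leq\sum_{j=1}^n|\nabla f_j(0)|^2\leq\sum_{j=1}^nA_j(p_j)^2=M_Q(p)^2,
\]
which is \eqref{eq:box-bound}.

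For sharpness, set \(q_j=(p_j-a_j)/(b_j-a_j)\in(0,1)\). Take the boundary functions
\[
 U_j(e^{it})=a_j+(b_j-a_j)\chi_{[-\pi q_j,\pi q_j]}(t),
\]
all built from arcs centered at the same point \(1\in\T\), and put \(f_j=P[U_j]\).
\begin{itemize}
\item Since \(0<q_j<1\), \(U_j\) is not essentially constant, so Lemma~\ref{lem:real-poisson} gives \(f_j(\D)\subset(a_j,b_j)\). Also \(f_j(0)=p_j\), so \(F=(f_1,\dots,f_n)\) maps \(\D\) into \(Q\) with \(F(0)=p\).
\item By the sharpness part of Theorem~\ref{thm:interval}, \(|\nabla f_j(0)|=A_j(p_j)>0\).
\item By \eqref{eq:real-poisson-data}, each \(U_j\) is even in \(t\), so \(\partial_y f_j(0)=0\). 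Explicitly,
\[
 \nabla f_j(0)=\Bigl(\tfrac{2(b_j-a_j)}{\pi}\sin(\pi q_j),\,0\Bigr).
\]
\end{itemize}
All rows are thus positive multiples of \((1,0)\). The equality clause of Lemma~\ref{lem:operator-HS} then gives \(\|dF_0\|^2=\sum_j A_j(p_j)^2\), so \(\|dF_0\|=M_Q(p)\). Alternatively, evaluating \(dF_0\) directly on \(e_1\) gives the same value.

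There is no real obstacle here. The only point needing care is that the common arc center makes all gradients collinear; this is exactly what turns the Hilbert--Schmidt bound into an equality. It is immediate from the explicit formula \eqref{eq:real-poisson-data}.
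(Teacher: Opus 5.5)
Your proposal is correct and follows the paper's proof essentially verbatim: the upper bound combines Theorem~\ref{thm:interval} with Lemma~\ref{lem:operator-HS}, and sharpness uses interval extremals whose arcs share a common center, so all gradients are parallel. Your explicit computation of \(\nabla f_j(0)\) via \eqref{eq:real-poisson-data} simply makes concrete the collinearity that the paper asserts.
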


\begin{proof}
Each component \(f_j:\D\to(a_j,b_j)\) is harmonic and satisfies \(f_j(0)=p_j\). By Theorem~\ref{thm:interval},
\[
 |\nabla f_j(0)|\leq A_j(p_j),
 \qquad j=1,\dots,n.
\]
The rows of the matrix of \(dF_0\) are the vectors \(\nabla f_j(0)\).
Lemma~\ref{lem:operator-HS} therefore gives
\[
 \|dF_0\|^2\leq\sum_{j=1}^n |\nabla f_j(0)|^2
 \leq\sum_{j=1}^n A_j(p_j)^2.
\]
This proves \eqref{eq:box-bound}.

To obtain equality, we choose extremal interval functions for all components with
arcs centered at the same point of \(\T\). Then all gradients
\(\nabla f_j(0)\) are parallel and have lengths \(A_j(p_j)\).
Lemma~\ref{lem:operator-HS} shows that the operator norm is exactly
\(M_Q(p)\).
\end{proof}

The equality cases are governed by the same collinearity condition as in the
rectangle case.

\begin{proposition}\label{prop:box-equality}
Suppose \(F=(f_1,\dots,f_n):\D\to Q\) is harmonic, \(F(0)=p\), and equality holds in \eqref{eq:box-bound}. If \(U_j\) are the radial boundary values of \(f_j\) and \(q_j=(p_j-a_j)/(b_j-a_j)\), then, up to null sets,
\[
 \frac{U_j-a_j}{b_j-a_j},\qquad j=1,\dots,n,
\]
are characteristic functions of arcs of lengths \(2\pi q_j\). The centers of
all these arcs determine the same unoriented line through the origin in the
parameter plane \(\R^2\). Conversely, every such choice gives equality.
\end{proposition}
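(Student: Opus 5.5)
The proof follows the rectangle case, Proposition~\ref{prop:rect-equality}, almost verbatim. Assume equality in \eqref{eq:box-bound}. The proof of Theorem~\ref{thm:box} gives the chain
\[
 \|dF_0\|^2\leq\sum_{j=1}^n|\nabla f_j(0)|^2\leq\sum_{j=1}^nA_j(p_j)^2,
\]
so equality must hold in both inequalities. The second inequality is a sum of termwise inequalities \(|\nabla f_j(0)|\le A_j(p_j)\) from Theorem~\ref{thm:interval}. The deficits \(A_j(p_j)^2-|\nabla f_j(0)|^2\) are nonnegative and sum to zero, so each vanishes. The equality statement of Theorem~\ref{thm:interval} then shows that each \((U_j-a_j)/(b_j-a_j)\) is a.e.\ the characteristic function of an arc \(I_{q_j,\theta_j}\) of length \(2\pi q_j\).

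Next I would locate the gradient directions. For \(U_j=a_j+(b_j-a_j)\chi_{I_{q_j,\theta_j}}\), formula \eqref{eq:real-poisson-data} gives
\[
 \nabla f_j(0)=2(b_j-a_j)\int_{I_{q_j,\theta_j}}(\cos t,\sin t)\dm=\frac{2(b_j-a_j)}{\pi}\sin(\pi q_j)\,(\cos\theta_j,\sin\theta_j),
\]
since the constant \(a_j\) contributes nothing. So \(\nabla f_j(0)\) is a positive multiple of \(e^{i\theta_j}\), and it is nonzero because \(0<q_j<1\). Equality in the first inequality of the chain is, by Lemma~\ref{lem:operator-HS}, equivalent to collinearity of the nonzero rows \(\nabla f_j(0)\). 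All rows are nonzero here, so the vectors \(e^{i\theta_j}\) span one line, that is, \(\theta_j\equiv\theta_k \pmod \pi\) for all \(j,k\). This is exactly the statement that the arc centers determine a common unoriented line.

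For the converse, take arcs \(I_{q_j,\theta_j}\) with all \(\theta_j\) equal modulo \(\pi\), and set \(F=(P[U_j])_j\). Lemma~\ref{lem:real-poisson} gives \(F(\D)\subset Q\), because each \(U_j\) is not a.e.\ constant, and \(F(0)=p\), because the means equal \(p_j\). Theorem~\ref{thm:interval} gives \(|\nabla f_j(0)|=A_j(p_j)\). The computation above shows that the rows are collinear, so Lemma~\ref{lem:operator-HS} yields \(\|dF_0\|^2=\sum_jA_j(p_j)^2\).

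No step is a genuine obstacle. The only point needing care is making explicit that the direction of \(\nabla f_j(0)\) is the arc center \(e^{i\theta_j}\); the displayed integral settles this, and it is what turns row collinearity into the geometric condition on the centers.
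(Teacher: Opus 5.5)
Your proposal is correct and follows the paper's proof. You force equality through the chain \(\|dF_0\|^2\le\sum_j|\nabla f_j(0)|^2\le\sum_jA_j(p_j)^2\), use the equality case of Theorem~\ref{thm:interval} to get the arcs, and use Lemma~\ref{lem:operator-HS}, with all rows nonzero, to get collinear gradients. Your explicit computation that \(\nabla f_j(0)\) is a positive multiple of the arc center \(e^{i\theta_j}\) spells out the link between collinear gradients and the common-line condition on the centers, which the paper leaves implicit.
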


\begin{proof}
Equality in \eqref{eq:box-bound} forces equality in every scalar interval
estimate and in the comparison of the operator norm with the Hilbert-Schmidt
norm. Theorem~\ref{thm:interval} therefore gives the asserted arc structure
of the boundary functions. Since every \(p_j\) lies in the interior of its
interval, all scalar constants are positive; Lemma~\ref{lem:operator-HS}
then shows that all row vectors \(\nabla f_j(0)\) are collinear. Conversely,
collinear component gradients attaining their scalar bounds make the operator
norm equal to the Hilbert-Schmidt norm and yield equality in
\eqref{eq:box-bound}.
\end{proof}

The unit hypercube is obtained by substituting the common interval
\((0,1)\) into the box formula.

\begin{corollary}\label{cor:hypercube}
For the unit hypercube \(Q=(0,1)^n\) and \(p=(p_1,\dots,p_n)\in Q\),
\[
 \sup\left\{
 \|dF_0\|:
 F:\D\to(0,1)^n\ \text{harmonic},\ F(0)=p
 \right\}
 =\frac2\pi\left(\sum_{j=1}^n \sin^2(\pi p_j)\right)^{1/2}.
\]
\end{corollary}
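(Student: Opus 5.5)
This is the specialization of Theorem~\ref{thm:box} to \(a_j=0\), \(b_j=1\) for all \(j\), combined with the sharpness part of that theorem, which turns the inequality into an identity for the supremum. I would carry it out in three short steps.

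First, substitute \(a_j=0\), \(b_j=1\) into \eqref{eq:Aj-def}. This gives
\[
 A_j(p_j)=\frac{2}{\pi}\sin(\pi p_j),
\]
and hence, by \eqref{eq:MQ-def},
\[
 M_Q(p)=\frac{2}{\pi}\Bigl(\sum_{j=1}^n\sin^2(\pi p_j)\Bigr)^{1/2}.
\]

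Second, for every harmonic \(F:\D\to(0,1)^n\) with \(F(0)=p\), the estimate \eqref{eq:box-bound} shows that the supremum is at most this value.

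Third, for the reverse inequality, I would use the extremal map from the proof of Theorem~\ref{thm:box}. Each component is the Poisson integral of \(\chi_{[-\pi p_j,\pi p_j]}\), and all the arcs are centered at the same point of \(\T\). Since each \(p_j\in(0,1)\), each boundary function is nonconstant, so Lemma~\ref{lem:real-poisson} keeps the components inside \((0,1)\). Theorem~\ref{thm:interval} gives \(|\nabla f_j(0)|=A_j(p_j)\) for each \(j\). Because the arcs share a center, the gradients are parallel, and Lemma~\ref{lem:operator-HS} then makes the operator norm equal to the Hilbert-Schmidt norm. The supremum is therefore attained.

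None of these steps is difficult. The only point needing care is that the supremum is actually attained, which is exactly the sharpness part of Theorem~\ref{thm:box} and is already established there.
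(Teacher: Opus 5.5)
Your proposal is correct and matches the paper's proof, which substitutes \(a_j=0\), \(b_j=1\) into Theorem~\ref{thm:box}, including its sharpness clause, to get \(A_j(p_j)=\frac{2}{\pi}\sin(\pi p_j)\) and hence the stated formula. Your re-derivation of an attaining map via arcs of length \(2\pi p_j\) with a common center is just the sharpness argument already contained in that theorem's proof.
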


\begin{proof}
For \((0,1)^n\), Theorem~\ref{thm:box} gives \(A_j(p_j)=(2/\pi)\sin(\pi p_j)\). Substitution in \eqref{eq:MQ-def} gives the formula.
\end{proof}

The choice of norm on the target is essential to the square-root formula.

\begin{remark}\label{rem:target-norm}
The square-root formula in Theorem~\ref{thm:box} is tied to the Euclidean operator norm of \(dF_0:\R^2\to\R^n\). If a different target norm were chosen, the extremal formula would change. Decoupling the coordinate constraints alone therefore does not determine \(M_Q(p)\); the Euclidean geometry of the full differential is essential.
\end{remark}

\section{A product principle for prescribed-value extremals}
\label{sec:product-principle}

The preceding box calculation contains a product mechanism that is independent
of interval geometry. It uses only two facts: each component remains admissible
after a rotation of the parameter disk, and the squared Euclidean norm on the
product is the sum of the squared component norms. The same argument therefore
applies to arbitrary products of bounded Euclidean domains.

For a bounded domain \(G\subset\R^m\) and \(p\in G\), we use the notation
\[
 M_G(p)=
 \sup\{\|df_0\|:f:\D\to G\ \text{harmonic},\ f(0)=p\}.
\]
The number \(M_G(p)\) is finite because \(G\) is bounded. Indeed, after
placing \(G\) in a Euclidean ball, the Poisson derivative formula gives a
uniform first-derivative bound at the origin. It is positive because \(p\) is
an interior point: a sufficiently short affine harmonic segment through \(p\)
is contained in \(G\).

\begin{theorem}\label{thm:product-principle}
Let \(G_j\subset\R^{m_j}\) be bounded domains,
\(p_j\in G_j\), \(j=1,\ldots,N\), with
\[
 G=G_1\times\cdots\times G_N,\qquad
 p=(p_1,\ldots,p_N).
\]
The target product is equipped with its Euclidean norm. Then
\begin{equation}\label{eq:product-principle}
 M_G(p)^2
 =
 \sum_{j=1}^NM_{G_j}(p_j)^2.
\end{equation}
\end{theorem}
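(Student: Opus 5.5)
The plan is to prove the two inequalities separately, with Lemma~\ref{lem:block-operator} as the main tool.

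\emph{Upper bound.} Let \(F=(f_1,\ldots,f_N):\D\to G\) be harmonic with \(F(0)=p\). Each component \(f_j\) is harmonic, takes values in \(G_j\), and satisfies \(f_j(0)=p_j\). By definition, \(\|d(f_j)_0\|\le M_{G_j}(p_j)\). The differential \(dF_0\) is the block map \(((df_1)_0,\ldots,(df_N)_0)\) into \(\R^{m_1}\oplus\cdots\oplus\R^{m_N}\). Hence Lemma~\ref{lem:block-operator} gives
\[
\|dF_0\|^2\le\sum_{j=1}^N\|d(f_j)_0\|^2\le\sum_{j=1}^N M_{G_j}(p_j)^2 .
\]
Taking the supremum over \(F\) yields \(M_G(p)^2\le\sum_j M_{G_j}(p_j)^2\).

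\emph{Lower bound.} Fix \(\varepsilon>0\). For each \(j\), choose a harmonic \(f_j:\D\to G_j\) with \(f_j(0)=p_j\) and \(\|d(f_j)_0\|\ge M_{G_j}(p_j)-\varepsilon\). The unit sphere of \(\R^2\) is compact, so there is a unit vector \(e^{i\alpha_j}\) with \(|d(f_j)_0(e^{i\alpha_j})|=\|d(f_j)_0\|\).

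Set \(g_j(z)=f_j(e^{i\alpha_j}z)\). A rotation of the disk is holomorphic and maps \(\D\) onto \(\D\), so \(g_j\) is harmonic, maps \(\D\) into \(G_j\), and satisfies \(g_j(0)=p_j\). By the chain rule, \(d(g_j)_0(v)=d(f_j)_0(e^{i\alpha_j}v)\). Therefore \(\|d(g_j)_0\|=\|d(f_j)_0\|\), and this norm is attained at the common vector \(v=1\).

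Now put \(G^\ast=(g_1,\ldots,g_N)\). It is harmonic, maps \(\D\) into \(G\), and satisfies \(G^\ast(0)=p\). Then
\[
\|dG^\ast_0\|^2\ge|dG^\ast_0(1)|^2=\sum_{j=1}^N\|d(f_j)_0\|^2\ge\sum_{j=1}^N\bigl(M_{G_j}(p_j)-\varepsilon\bigr)^2 .
\]
Here \(\varepsilon<\min_j M_{G_j}(p_j)\), which is possible because each \(M_{G_j}(p_j)\) is positive. Letting \(\varepsilon\to0\) gives the reverse inequality.

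\emph{Main obstacle.} There is no real obstacle. The only point needing care is that the factor suprema may not be attained. This is handled by the \(\varepsilon\)-approximation above, using the finiteness and positivity of \(M_{G_j}(p_j)\) noted before the statement. Aligning the maximizing directions by independent rotations of the parameter disk then replaces any need for attainment or convexity.
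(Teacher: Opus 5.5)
Your proof is correct and follows the paper's argument essentially step for step. The upper bound comes from Lemma~\ref{lem:block-operator} applied to the block differential. The lower bound comes from near-extremal factor maps whose maximizing directions you align by independent rotations of the parameter disk, and then you let \(\varepsilon\to0\) with \(\varepsilon<\min_j M_{G_j}(p_j)\).
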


\begin{proof}
For a harmonic map
\(F=(f_1,\ldots,f_N):\D\to G\) with \(F(0)=p\),
Lemma~\ref{lem:block-operator} gives
\[
 \|dF_0\|^2
 \leq
 \sum_{j=1}^N\|d(f_j)_0\|^2
 \leq
 \sum_{j=1}^NM_{G_j}(p_j)^2.
\]
This proves the upper estimate.

For the reverse inequality, choose
\(0<\varepsilon<\min_j M_{G_j}(p_j)\). For each \(j\), there is a harmonic
map \(g_j:\D\to G_j\) with \(g_j(0)=p_j\) and
\[
 \|d(g_j)_0\|>M_{G_j}(p_j)-\varepsilon.
\]
A unit vector \(v_j\in\R^2\) attains the norm of \(d(g_j)_0\).  A rotation
\(R_j\) of the parameter disk can be chosen so that \(R_je_1=v_j\). The maps
\(f_j=g_j\circ R_j\) remain harmonic, satisfy \(f_j(0)=p_j\), and
\[
 |d(f_j)_0e_1|=\|d(g_j)_0\|.
\]
For \(F_\varepsilon=(f_1,\ldots,f_N)\),
\[
 \|d(F_\varepsilon)_0\|^2
 \geq
 |d(F_\varepsilon)_0e_1|^2
 =
 \sum_{j=1}^N\|d(g_j)_0\|^2
 >
 \sum_{j=1}^N\bigl(M_{G_j}(p_j)-\varepsilon\bigr)^2.
\]
Letting \(\varepsilon\to0^+\) gives the reverse inequality and proves
\eqref{eq:product-principle}.
\end{proof}

The equality cases admit a corresponding factorwise characterization.

\begin{theorem}
\label{thm:product-equality}
Under the hypotheses of Theorem~\ref{thm:product-principle}, let
\(F=(f_1,\ldots,f_N):\D\to G\) be harmonic with \(F(0)=p\).
Then
\[
 \|dF_0\|=M_G(p)
\]
if and only if the following two conditions hold:
\begin{enumerate}
\item each component is extremal,
\[
 \|d(f_j)_0\|=M_{G_j}(p_j),
 \qquad j=1,\ldots,N;
\]
\item the differentials \(d(f_j)_0\) have a common maximizing direction in
\(\R^2\), that is, there is a unit vector \(v\in\R^2\) such that
\[
 |d(f_j)_0v|=\|d(f_j)_0\|,
 \qquad j=1,\ldots,N.
\]
\end{enumerate}
\end{theorem}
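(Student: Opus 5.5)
The plan is to sandwich \(\|dF_0\|^2\) between the two inequalities already used for the upper bound in Theorem~\ref{thm:product-principle}, and to read off the equality conditions from each link of the chain. For any admissible \(F=(f_1,\ldots,f_N)\) with \(F(0)=p\), Lemma~\ref{lem:block-operator} applied with \(A_j=d(f_j)_0\) and the definition of \(M_{G_j}(p_j)\) give
\[
 \|dF_0\|^2\leq\sum_{j=1}^N\|d(f_j)_0\|^2\leq\sum_{j=1}^N M_{G_j}(p_j)^2=M_G(p)^2,
\]
where the final identity is Theorem~\ref{thm:product-principle} itself.

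For the forward direction, I will assume \(\|dF_0\|=M_G(p)\), so both inequalities in the chain are equalities. In the second inequality each summand satisfies \(\|d(f_j)_0\|^2\leq M_{G_j}(p_j)^2\). The deficits are nonnegative and sum to zero, so every deficit vanishes, which is condition~(1). Equality in the first inequality is exactly the equality case of Lemma~\ref{lem:block-operator}, which supplies a unit vector \(v\) with \(|d(f_j)_0v|=\|d(f_j)_0\|\) for all \(j\), which is condition~(2).

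For the converse, conditions (1) and (2) give
\[
 \|dF_0\|^2\geq|dF_0v|^2=\sum_j|d(f_j)_0v|^2=\sum_j\|d(f_j)_0\|^2=\sum_jM_{G_j}(p_j)^2=M_G(p)^2.
\]
Combined with the upper bound, this yields \(\|dF_0\|=M_G(p)\).

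There is no real obstacle here. The only point requiring care is that extremality of the product is measured against the value \(M_G(p)\) computed in Theorem~\ref{thm:product-principle}. Without that identity, the chain would compare \(\|dF_0\|\) only with \(\sum_j M_{G_j}(p_j)^2\) and not with the actual supremum. Attainment of the factor suprema is not assumed, but it is not needed, because the statement concerns a given \(F\).
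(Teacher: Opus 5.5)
Your proposal is correct and matches the paper's proof: it uses the same chain \(\|dF_0\|^2\le\sum_j\|d(f_j)_0\|^2\le\sum_j M_{G_j}(p_j)^2=M_G(p)^2\). Equality in the second link forces each component to be extremal, and the equality case of Lemma~\ref{lem:block-operator} handles both the first link and the converse.
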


\begin{proof}
The proof of Theorem~\ref{thm:product-principle} gives
\[
 \|dF_0\|^2
 \leq
 \sum_{j=1}^N\|d(f_j)_0\|^2
 \leq
 \sum_{j=1}^NM_{G_j}(p_j)^2
 =
 M_G(p)^2.
\]
If the first and last terms are equal, then both intermediate inequalities
are equalities. The second equality forces every component to be extremal.
The first equality is characterized by Lemma~\ref{lem:block-operator} and
gives the common maximizing direction. The converse follows from the same
lemma.
\end{proof}

\begin{remark}\label{rem:box-product-principle}
For \(G_j=(a_j,b_j)\), Theorem~\ref{thm:interval} gives
\(M_{G_j}(p_j)=A_j(p_j)\), so Theorem~\ref{thm:product-principle} recovers
Theorem~\ref{thm:box}. In the scalar-block case, the common maximizing
direction in
Theorem~\ref{thm:product-equality} is exactly the collinearity of the
component gradients in Proposition~\ref{prop:box-equality}.
\end{remark}

The following algebraic inequality relates the Euclidean and coordinatewise
product structures. If \(m_j>0\) and
\(v=(v_1,\ldots,v_N)\) belongs to a Euclidean product, then
\begin{equation}\label{eq:l2-linfty-comparison}
 \frac{2|v|}{(\sum_{j=1}^Nm_j^2)^{1/2}}
 \leq
 \max_{1\leq j\leq N}\frac{2|v_j|}{m_j}.
\end{equation}
Indeed,
\[
 \frac{4|v|^2}{\sum_{j=1}^N m_j^2}
 =
 \sum_{j=1}^N\frac{m_j^2}{\sum_{k=1}^N m_k^2}
 \left(\frac{2|v_j|}{m_j}\right)^2
 \leq
 \max_j\left(\frac{2|v_j|}{m_j}\right)^2.
\]
We shall use this comparison to relate the Euclidean extremal density to the
coordinatewise product metrics.

\section{Pointwise forms and complete conformal metrics on the domain}
\label{sec:pointwise-general}

The definition of \(M_G\) is local at the origin. Conformal
reparametrization by disk automorphisms turns the origin estimate into an
exact pointwise statement for every bounded Euclidean target.

\begin{theorem}
\label{thm:general-pointwise}
Let \(G\subset\R^m\) be a bounded domain and let
\(F:\D\to G\) be harmonic. Then
\begin{equation}\label{eq:general-pointwise}
 \|dF_z\|
 \leq
 \frac{M_G(F(z))}{1-|z|^2},
 \qquad z\in\D.
\end{equation}
For fixed \(z\in\D\) and \(p\in G\), the right-hand side is the exact
supremum among harmonic maps satisfying \(F(z)=p\).
\end{theorem}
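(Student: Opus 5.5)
The plan is to reduce the pointwise statement to the definition of \(M_G\) at the origin by precomposing with a disk automorphism. Fix \(z\in\D\) and set
\[
 \varphi_z(\zeta)=\frac{\zeta+z}{1+\overline z\zeta},
\]
so that \(\varphi_z(0)=z\) and \(\varphi_z'(0)=1-|z|^2\). Given a harmonic map \(F:\D\to G\), the composition \(H=F\circ\varphi_z\) is again harmonic componentwise, because precomposition with a holomorphic map preserves harmonicity of real-valued functions. It also satisfies \(H(\D)\subset G\) and \(H(0)=F(z)\). Hence \(\|dH_0\|\le M_G(F(z))\) by the definition of \(M_G\).

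By the chain rule, \(dH_0=dF_z\circ d(\varphi_z)_0\). Since \(\varphi_z\) is holomorphic, \(d(\varphi_z)_0\) is multiplication by the complex number \(1-|z|^2>0\), which is a positive multiple of the identity on \(\R^2\). Therefore
\[
 \|dH_0\|=(1-|z|^2)\|dF_z\|,
\]
and dividing by \(1-|z|^2\) gives \eqref{eq:general-pointwise}.

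For exactness at a fixed pair \(z\in\D\), \(p\in G\), run the construction backwards. Take harmonic maps \(g_k:\D\to G\) with \(g_k(0)=p\) and \(\|d(g_k)_0\|\to M_G(p)\); these exist by the definition of the supremum, and attainment is not needed. Set \(F_k=g_k\circ\varphi_z^{-1}\), where \(\varphi_z^{-1}=\varphi_{-z}\) is again a disk automorphism. Then \(F_k\) is harmonic, maps \(\D\) into \(G\), and satisfies \(F_k(z)=p\). The same chain-rule computation with \(H=g_k\) gives
\[
 \|d(F_k)_z\|=\frac{\|d(g_k)_0\|}{1-|z|^2}\longrightarrow\frac{M_G(p)}{1-|z|^2},
\]
so the bound is the supremum.

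There is no serious obstacle here. The only point needing care is that the operator norm scales exactly by \(|\varphi_z'(0)|\), and this holds because the differential of a holomorphic map is a conformal similarity. A general real-linear change of parameters would not scale the norm this way, but disk automorphisms are conformal. Finiteness and positivity of \(M_G(p)\) were already noted before Theorem~\ref{thm:product-principle}.
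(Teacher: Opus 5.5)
Your proposal is correct and follows the paper's proof: you precompose with the disk automorphism \(\varphi_z\), use that its differential at \(0\) is the similarity of ratio \(1-|z|^2\), and obtain sharpness by precomposing near-extremals at the origin with \(\varphi_z^{-1}\). You also spell out the explicit automorphism and the conformality of its differential, which the paper leaves implicit.
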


\begin{proof}
For \(z\in\D\), a disk automorphism \(\varphi_z\) can be chosen with
\(\varphi_z(0)=z\) and \(|\varphi_z'(0)|=1-|z|^2\).  The map
\(F\circ\varphi_z\) is harmonic and takes the value \(F(z)\) at the origin.
Thus
\[
 (1-|z|^2)\|dF_z\|
 =
 \|d(F\circ\varphi_z)_0\|
 \leq
 M_G(F(z)).
\]
Conversely, near-extremals at the origin can be precomposed with
\(\varphi_z^{-1}\), which proves the sharpness statement.
\end{proof}

For product targets, Theorem~\ref{thm:product-principle} makes the
pointwise factor explicit.

\begin{corollary}\label{cor:product-pointwise}
Let \(G_j\subset\R^{m_j}\), \(j=1,\ldots,N\), be bounded domains, let
\(G=\prod_{j=1}^N G_j\) carry the Euclidean product norm, and let
\(F=(f_1,\ldots,f_N):\D\to G\) be harmonic. Then
\begin{equation}\label{eq:product-pointwise}
 \|dF_z\|
 \leq
 \frac{
 \left(\sum_{j=1}^NM_{G_j}(f_j(z))^2\right)^{1/2}
 }{1-|z|^2}.
\end{equation}
For a box \(Q=\prod(a_j,b_j)\), this becomes
\[
 \|dF_z\|
 \leq
 \frac{M_Q(F(z))}{1-|z|^2}.
\]
For a rectangle \(R\subset\C\), Lemma~\ref{lem:operator-norm} gives
\[
 |f_z(z)|+|f_{\overline z}(z)|
 \leq
 \frac{M_R(f(z))}{1-|z|^2}.
\]
\end{corollary}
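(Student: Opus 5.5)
The corollary follows by applying Theorem~\ref{thm:general-pointwise} to the product target \(G=\prod_{j=1}^N G_j\), and then replacing \(M_G\) by its factor decomposition from Theorem~\ref{thm:product-principle}.

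First, \(G\) is a bounded domain in \(\R^{m_1+\cdots+m_N}\), so Theorem~\ref{thm:general-pointwise} applies to the harmonic map \(F:\D\to G\). It gives
\[
 \|dF_z\|\leq \frac{M_G(F(z))}{1-|z|^2}.
\]
Second, we evaluate \(M_G\) at the point \(F(z)=(f_1(z),\ldots,f_N(z))\in G\). Theorem~\ref{thm:product-principle} yields
\[
 M_G(F(z))=\Bigl(\sum_{j=1}^N M_{G_j}(f_j(z))^2\Bigr)^{1/2},
\]
and substituting this gives \eqref{eq:product-pointwise}.

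For a box, we take \(G_j=(a_j,b_j)\). By Remark~\ref{rem:box-product-principle} (that is, Theorem~\ref{thm:interval} together with sharpness), \(M_{G_j}(x)=A_j(x)\). The right-hand side therefore becomes \(M_Q(F(z))/(1-|z|^2)\) by \eqref{eq:MQ-def}.

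For a rectangle \(R=(a,b)\times(c,d)\subset\C\), we identify \(\C\) with \(\R^2\), so that \(R\) is a two-factor box with factors \(A_R\) and \(B_R\) and \(M_Q=M_R\). Lemma~\ref{lem:operator-norm} then rewrites \(\|df_z\|\) as \(|f_z(z)|+|f_{\overline z}(z)|\).

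There is no genuine obstacle, since every step is a direct substitution. The only point needing care is that the interval supremum \(M_{(a_j,b_j)}(x)\) equals \(A_j(x)\) exactly, not merely bounded above by it. This is needed so that the box formula is the specialization of \eqref{eq:product-pointwise}; the sharpness part of Theorem~\ref{thm:interval} supplies it. For the inequality alone, the upper bound would suffice in any case.
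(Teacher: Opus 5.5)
Your proposal is correct and follows the paper's own route: apply Theorem~\ref{thm:general-pointwise} to the bounded product domain \(G\), then evaluate \(M_G(F(z))\) with Theorem~\ref{thm:product-principle}, and specialize using \(M_{(a_j,b_j)}=A_j\) from Theorem~\ref{thm:interval} for boxes and Lemma~\ref{lem:operator-norm} for rectangles. Your remark that the upper bound alone would already give the inequality is also accurate.
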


The same argument is invariant under conformal reparametrization of the
domain. If \(\Omega\subsetneq\C\) is simply connected, we write
\(\lambda_\Omega(w)|dw|^2\) for its hyperbolic metric of curvature \(-1\).

\begin{corollary}\label{cor:general-hyperbolic-source}
Suppose \(\Omega\subsetneq\C\) is simply connected, \(G\subset\R^m\) is a
bounded domain, and \(F:\Omega\to G\) is harmonic. Then
\begin{equation}\label{eq:general-hyperbolic-source}
 \|dF_w\|
 \leq
 \frac{\sqrt{\lambda_\Omega(w)}}2\,M_G(F(w)),
 \qquad w\in\Omega.
\end{equation}
\end{corollary}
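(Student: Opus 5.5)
The plan is to reduce Corollary~\ref{cor:general-hyperbolic-source} to the pointwise estimate of Theorem~\ref{thm:general-pointwise} by means of a Riemann map, using the fact that harmonicity is preserved under conformal reparametrization of the source. Fix \(w\in\Omega\). Since \(\Omega\subsetneq\C\) is simply connected, the Riemann mapping theorem gives a conformal bijection \(\phi:\D\to\Omega\). Composing with a disk automorphism, we may assume \(\phi(0)=w\). The map \(H=F\circ\phi:\D\to G\) is harmonic, because precomposition with a holomorphic map preserves harmonicity of each real component. Moreover \(H(0)=F(w)\).

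Applying the definition of \(M_G\), or equivalently Theorem~\ref{thm:general-pointwise} at \(z=0\), gives
\[
 \|dH_0\|\leq M_G(F(w)).
\]
By the chain rule, \(dH_0=dF_w\circ d\phi_0\). The real differential \(d\phi_0\) is multiplication by the complex number \(\phi'(0)\), which is a rotation followed by dilation by \(|\phi'(0)|\). Therefore
\[
 \|dH_0\|=|\phi'(0)|\,\|dF_w\|,
\]
and so \(\|dF_w\|\leq M_G(F(w))/|\phi'(0)|\).

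It remains to identify \(1/|\phi'(0)|\) with \(\sqrt{\lambda_\Omega(w)}/2\). This normalization is the only point requiring care, and I regard it as the main (mild) obstacle. The curvature \(-1\) Poincar\'e metric on \(\D\) is
\[
 \frac{4|dz|^2}{(1-|z|^2)^2},
\]
so its density at the origin is \(4\) in the squared convention \(\lambda|dw|^2\) used in the statement. Transporting this metric by the isometry \(\phi\) gives
\[
 \lambda_\Omega(\phi(z))\,|\phi'(z)|^2=\frac{4}{(1-|z|^2)^2}.
\]
Setting \(z=0\) yields \(\lambda_\Omega(w)=4/|\phi'(0)|^2\), that is, \(1/|\phi'(0)|=\sqrt{\lambda_\Omega(w)}/2\). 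Substituting this into the previous bound gives \eqref{eq:general-hyperbolic-source}.

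As a consistency check, for \(\Omega=\D\) we have \(\sqrt{\lambda_\D(w)}/2=1/(1-|w|^2)\), which recovers \eqref{eq:general-pointwise}. In the same way, precomposing near-extremal maps at the origin with \(\phi^{-1}\) shows that the estimate is sharp pointwise.
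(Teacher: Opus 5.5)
Your proposal is correct and is essentially the paper's proof: precompose \(F\) with a conformal map \(h:\D\to\Omega\) with \(h(0)=w\), apply the definition of \(M_G(F(w))\) to \(F\circ h\), and use the normalization \(\lambda_\Omega(w)|h'(0)|^2=4\). Your derivation of that normalization from the curvature \(-1\) density \(4/(1-|z|^2)^2\) on \(\D\), and your consistency check against Theorem~\ref{thm:general-pointwise}, spell out details the paper leaves implicit without changing the route.
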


\begin{proof}
A conformal map \(h:\D\to\Omega\) with \(h(0)=w\) satisfies
\[
 \lambda_\Omega(w)|h'(0)|^2=4.
\]
Applying the definition of \(M_G(F(w))\) to \(F\circ h\) gives
\[
 |h'(0)|\,\|dF_w\|\leq M_G(F(w)),
\]
which is equivalent to \eqref{eq:general-hyperbolic-source}.
\end{proof}

The pointwise estimate also extends to complete conformal metrics on the
domain, independently of the shape of the target.
For a local expression \(ds^2=\rho(z)|dz|^2\), we use the curvature
normalization
\[
 K_\rho(z)=-\frac{1}{2\rho(z)}\Delta\log\rho(z),
 \qquad
 \Delta=4\partial_z\partial_{\overline z}.
\]
The resulting invariant estimate is stated next.

\begin{theorem}\label{thm:complete-general}
Let \(X\) be a hyperbolic Riemann surface and let
\(ds^2=\rho(z)|dz|^2\) be a complete \(C^2\) conformal metric on \(X\).
Suppose
\[
 K_\rho\geq-\kappa^2
\]
for some \(\kappa>0\). If \(G\subset\R^m\) is a bounded domain and
\(F:X\to G\) is harmonic in local conformal coordinates, then
\begin{equation}\label{eq:complete-general}
 \frac{\|dF_P\|}{\sqrt{\rho(P)}}
 \leq
 \frac{\kappa}{2}M_G(F(P)),
 \qquad P\in X.
\end{equation}
The quotient on the left is independent of the local conformal coordinate.
\end{theorem}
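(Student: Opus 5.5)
The plan is to reduce the estimate to the disk case, Theorem~\ref{thm:general-pointwise}, through the universal cover, and then compare \(\rho\) with the pulled-back hyperbolic metric by the Ahlfors--Yau Schwarz lemma.

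\textbf{Coordinate independence.} If \(z=\phi(\zeta)\) is a holomorphic change of local coordinate, the metric transforms as \(\tilde\rho(\zeta)=\rho(\phi(\zeta))|\phi'(\zeta)|^2\). Harmonicity is preserved, and
\[
 \|d(F\circ\phi)_\zeta\|=|\phi'(\zeta)|\,\|dF_{\phi(\zeta)}\|,
\]
because \(d\phi_\zeta\) is a similarity with ratio \(|\phi'(\zeta)|\). The factors cancel in \(\|dF\|/\sqrt\rho\), so the quotient is well defined.

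\textbf{Lift to the disk.} Since \(X\) is hyperbolic, there is a holomorphic universal covering \(\pi:\D\to X\); choose it with \(\pi(0)=P\). Then \(H=F\circ\pi:\D\to G\) is harmonic with \(H(0)=F(P)\), so the definition of \(M_G\) gives
\[
 \|dH_0\|\leq M_G(F(P)).
\]
In a local coordinate at \(P\), \(\|dH_0\|=|\pi'(0)|\,\|dF_P\|\). It therefore suffices to prove
\[
 \rho(P)\,|\pi'(0)|^2\geq\frac{4}{\kappa^2}.
\]
Indeed, this gives \(\|dF_P\|/\sqrt{\rho(P)}\leq \|dH_0\|\,\kappa/2\), which is the claim.

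\textbf{Curvature comparison.} The pullback \(\pi^*ds^2=\sigma|dz|^2\), with \(\sigma(z)=\rho(\pi(z))|\pi'(z)|^2\), is a complete \(C^2\) metric on \(\D\). It is complete because \(\pi\) is a local isometry onto a complete surface, and its curvature satisfies \(K_\sigma\geq-\kappa^2\).

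Let \(\lambda(z)=4/(1-|z|^2)^2\) be the Poincaré density of curvature \(-1\). Apply the Yau--Royden Schwarz lemma (Ahlfors lemma with complete source) to the identity map from \((\D,\lambda|dz|^2)\), which has curvature \(-1\) and is complete, to \((\D,\sigma|dz|^2)\). I would also take the target curvature bounded above by a negative constant, but note that the inequality is needed the other way round.

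What is actually needed is \(\sigma\geq \lambda/\kappa^2\). This is the Ahlfors--Yau comparison applied to the identity map from \((\D,\sigma)\), which is complete with \(K_\sigma\geq-\kappa^2\), to \((\D,\lambda)\), whose curvature is \(-1\leq0\). Yau's Schwarz lemma gives
\[
 \mathrm{id}^*\lambda\leq\frac{\kappa^2}{1}\,\sigma,
\]
that is, \(\lambda\leq\kappa^2\sigma\). At \(z=0\) this reads \(4\leq\kappa^2\rho(P)|\pi'(0)|^2\), which is exactly the required bound.

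\textbf{Main obstacle.} The main obstacle is invoking Yau's Schwarz lemma correctly. It requires the source to be complete with Ricci curvature, here Gaussian curvature, bounded below by \(-\kappa^2\), and the target to have curvature bounded above by \(-1\). Both hold in this setting.

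If I wanted to avoid quoting Yau's theorem, I would prove \(\lambda\leq\kappa^2\sigma\) directly. I would apply Omori--Yau's maximum principle to \(u=\log(\lambda/\sigma)\) on the complete surface \((\D,\sigma)\), using that \(u\) is bounded above. To see that it is bounded above, first compare on the discs \(|z|<r\) with the densities \(4r^2/(r^2-|z|^2)^2\), which blow up at \(|z|=r\), and then let \(r\to1\). In this approach the step needing care is that \(\sigma\) is only \(C^2\) and that completeness enters through the Omori--Yau principle.
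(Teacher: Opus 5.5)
Your argument is correct and is essentially the paper's. The paper invokes Yau's comparison on \(X\) itself for \(\operatorname{id}:(X,\rho|dz|^2)\to(X,\lambda_X|dz|^2)\) and then uses \(\lambda_X(P)|\pi'(0)|^2=4\), whereas you run the same comparison upstairs for \(\sigma=\pi^*\rho\) against the Poincar\'e density on \(\D\), which is just the pulled-back form of the same inequality. Note that completeness of \(\sigma\) holds because \(\pi\) is a covering map; being a surjective local isometry would not suffice.

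Your optional Omori--Yau sketch does not work as written. Comparison on \(|z|<r\) with the densities \(4r^2/(r^2-|z|^2)^2\) never uses completeness of \(\sigma\), so it cannot bound \(\lambda/\sigma\) from above: the flat metric \(|dz|^2\) on \(\D\) has curvature \(0\geq-\kappa^2\), yet \(\lambda/|dz|^2\) is unbounded. A self-contained proof should instead apply the generalized maximum principle to a bounded function such as \((\lambda/\sigma+c)^{-1/2}\), as in Yau's proof, or simply cite the comparison theorem as the paper does.
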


\begin{proof}
In a local conformal coordinate, let
\(\lambda_X(z)|dz|^2\) denote the hyperbolic metric of curvature \(-1\) on
\(X\). By the Schwarz comparison theorem for complete conformal metrics in
the \(C^2\) setting, applied to the holomorphic identity map
\[
 \operatorname{id}:(X,\rho|dz|^2)
 \longrightarrow (X,\lambda_X|dz|^2),
\]
the completeness of \(\rho|dz|^2\), together with
\(K_\rho\geq-\kappa^2\), gives
\begin{equation}\label{eq:metric-comparison-general}
 \lambda_X(P)\leq\kappa^2\rho(P).
\end{equation}
This is the one-dimensional form of Yau's Schwarz comparison principle
\cite{Yau}; for the \(C^2\) Riemann-surface formulation used here, see
\cite{KMS2025}. For the normalization of the hyperbolic metric, see
\cite{Ahlfors}.

A universal covering \(\pi:\D\to X\) with \(\pi(0)=P\) satisfies
\[
 \lambda_X(P)|\pi'(0)|^2=4.
\]
Applying the definition of \(M_G(F(P))\) to \(F\circ\pi\) yields
\[
 |\pi'(0)|\,\|dF_P\|\leq M_G(F(P)),
\]
hence
\[
 \|dF_P\|
 \leq
 \frac{\sqrt{\lambda_X(P)}}2M_G(F(P)).
\]
Combining this estimate with \eqref{eq:metric-comparison-general} gives
\eqref{eq:complete-general}. Under a change of local conformal coordinate,
both \(\|dF_P\|\) and \(\sqrt{\rho(P)}\) acquire the same conformal factor.
\end{proof}

The curvature factor in Theorem~\ref{thm:complete-general} is already sharp
in the constant-curvature model. On \(X=\D\), the metric
\(\rho=\lambda_\D/\kappa^2\) has curvature \(-\kappa^2\) and satisfies
\(\lambda_\D=\kappa^2\rho\).  At the origin, an extremal map for
\(M_G(p)\), or a sequence of near-extremals when the supremum is not
attained, makes \eqref{eq:complete-general} sharp.

For a product target \(G=\prod G_j\), the factor in
\eqref{eq:complete-general} is
\[
 M_G(F(P))
 =
 \left(
 \sum_{j=1}^NM_{G_j}(f_j(P))^2
 \right)^{1/2}.
\]
The same domain estimate therefore applies to both boxes and polydiscs once
their prescribed-value factor constants are inserted.

The exact operator-norm estimate also defines a Euclidean extremal density
whenever \(M_G\) is known:
\begin{equation}\label{eq:Euclidean-extremal-density}
 \mathcal E_G(p;v)=\frac{2|v|}{M_G(p)}.
\end{equation}
Corollary~\ref{cor:general-hyperbolic-source} is precisely the infinitesimal
contraction
\[
 \mathcal E_G(F(w);dF_w\xi)
 \leq
 \sqrt{\lambda_\Omega(w)}\,|\xi|.
\]
For product targets, this \(\ell^2\)-based density is distinct from the
coordinatewise \(\ell^\infty\)-based structure introduced below.

\section{The strip method, complexification, and the product-strip metric}
\label{sec:strip-product}

For interval factors, the sharp scalar density has an intrinsic hyperbolic
meaning. Identifying it with the Poincar\'e metric of a vertical strip places
the coordinatewise box metric inside classical complex geometry and explains
its completeness.

\subsection{The strip method}

For the symmetric interval \((-1,1)\), Theorem~\ref{thm:interval} reads
\begin{equation}\label{eq:strip-form}
 |\nabla u(0)|
 \leq
 \frac4\pi\cos\frac{\pi u(0)}2,
 \qquad
 u:\D\to(-1,1)\ \text{harmonic}.
\end{equation}
We use the strip
\[
 S=\{w\in\C:-1<\re w<1\}.
\]
The Poincar\'e metric of curvature \(-1\) on \(S\) is
\begin{equation}\label{eq:strip-metric}
 ds_S^2=
 \left(\frac\pi2\right)^2
 \frac{|dw|^2}{\cos^2(\frac\pi2\re w)}.
\end{equation}
Since \(\D\) is simply connected, \(u\) has a harmonic conjugate \(v\),
unique up to an additive constant. Thus \(H=u+iv\) is holomorphic from
\(\D\) to \(S\), and \(|H'(z)|=|\nabla u(z)|\).  Schwarz-Pick for \(H\),
with the curvature \(-1\) density \(2/(1-|z|^2)\) on \(\D\), gives
\[
 \frac{\pi}{2\cos(\frac\pi2u(z))}
 |\nabla u(z)|
 \leq
 \frac{2}{1-|z|^2}.
\]
At \(z=0\) this is \eqref{eq:strip-form}; an affine normalization gives
Theorem~\ref{thm:interval}.  Related strip and hyperbolic derivative methods
appear in
\cite{Burgeth,Chen2013,KalajVuorinen,Mateljevic2018,MateljevicSvetlik,KMS2025}.

For a general interval \((a_j,b_j)\), we associate the vertical strip
\[
 S_j=\{w\in\C:a_j<\re w<b_j\}.
\]
The Poincar\'e norm of curvature \(-1\) on \(S_j\) is
\begin{equation}\label{eq:stripj-hyp}
 \Hyp_{S_j}(w;\eta)
 =
 \frac{\pi|\eta|}
 {(b_j-a_j)
 \sin\!\left(\frac{\pi(\re w-a_j)}{b_j-a_j}\right)}.
\end{equation}
For \(p_j\in(a_j,b_j)\) and a real tangent vector \(v_j\),
\begin{equation}\label{eq:stripj-Aj}
 \Hyp_{S_j}(p_j;v_j)
 =
 \frac{2|v_j|}{A_j(p_j)}.
\end{equation}
Thus the sharp interval density is exactly the restriction of the strip
Poincar\'e metric to the real slice.

The real interval is in fact a hyperbolic geodesic. The conformal map
\[
 \Psi_j(w)=
 \exp\!\left(\frac{i\pi(w-a_j)}{b_j-a_j}\right)
\]
maps \(S_j\) onto the upper half-plane and \((a_j,b_j)\) onto the upper unit
semicircle. Hence, for \(x,y\in(a_j,b_j)\), the ambient strip distance is
\begin{equation}\label{eq:delta-j}
 \delta_j(x,y)
 =
 \left|
 \log\frac{
 \tan\!\left(\frac{\pi(y-a_j)}{2(b_j-a_j)}\right)}
 {\tan\!\left(\frac{\pi(x-a_j)}{2(b_j-a_j)}\right)}
 \right|.
\end{equation}

\subsection{Complexification of a box and the product metric}
\label{subsec:product-strip}

For
\[
 Q=\prod_{j=1}^n(a_j,b_j)
\]
we set
\[
 S_Q=\prod_{j=1}^nS_j.
\]
Then
\[
 Q=S_Q\cap\R^n.
\]
Thus \(S_Q\) is the coordinatewise complexification of \(Q\), obtained by
replacing each interval by the vertical strip having that interval as its
real geodesic.

For \(p\in Q\) and \(v\in T_pQ\simeq\R^n\), we define
\begin{equation}\label{eq:FQ-product}
 \mathcal F_Q(p;v)
 =
 \max_{1\leq j\leq n}\frac{2|v_j|}{A_j(p_j)}
 =
 \max_{1\leq j\leq n}\Hyp_{S_j}(p_j;v_j).
\end{equation}
This is a continuous family of reversible norms. Smoothness and strong
convexity are not required here.

We denote by \(d_Q\) the induced path distance. The factor geodesics can be
parametrized on a common interval with constant hyperbolic speeds, and the
max structure in \eqref{eq:FQ-product} gives
\begin{equation}\label{eq:dQ-product}
 d_Q(p,q)
 =
 \max_{1\leq j\leq n}\delta_j(p_j,q_j).
\end{equation}
The lower bound in \eqref{eq:dQ-product} follows because every curve from
\(p\) to \(q\) has length at least \(\delta_j(p_j,q_j)\) in its
\(j\)-th coordinate. For the reverse inequality, let
\[
 \Delta=\max_j\delta_j(p_j,q_j).
\]
For each \(j\), a coordinate geodesic from \(p_j\) to \(q_j\) can be
parametrized on \([0,1]\) with constant speed
\(\delta_j(p_j,q_j)\).  The product curve then has
\(\mathcal F_Q\)-speed \(\Delta\) almost everywhere and length \(\Delta\),
which proves \eqref{eq:dQ-product}.  Since each real geodesic
\((a_j,b_j)\) is complete in the induced strip distance, the max formula
also shows that \(d_Q\) is complete.

We use the standard Kobayashi normalization
\[
 \kappa_\D(z;\xi)=\frac{|\xi|}{1-|z|^2}.
\]
The product property of the Kobayashi-Royden metric and the Kobayashi
distance \cite{Kobayashi1998,JarnickiPflugProduct} gives
\[
 \kappa_{S_Q}(w;\eta)
 =
 \max_j\kappa_{S_j}(w_j;\eta_j),
 \qquad
 k_{S_Q}(w,\zeta)
 =
 \max_jk_{S_j}(w_j,\zeta_j).
\]
Since the curvature \(-1\) Poincar\'e norm is twice the standard
Kobayashi-Royden norm,
\begin{equation}\label{eq:Kob-restriction}
 \mathcal F_Q=2\kappa_{S_Q}|_{TQ},
 \qquad
 d_Q=2k_{S_Q}|_{Q\times Q}.
\end{equation}
For the symmetric cube \(Q=(-1,1)^n\),
\[
 \mathcal F_Q(p;v)
 =
 \max_{1\leq j\leq n}
 \frac{\pi|v_j|}{2\cos(\frac\pi2p_j)}.
\]
We denote by \(d_\D\) the Poincar\'e distance induced by the curvature
\(-1\) density \(2/(1-|z|^2)\). With the present normalization,
\(d_\D=2k_\D\).

The coordinate estimates give the following contraction theorem with an
optimal infinitesimal constant.

\begin{theorem}\label{thm:product-strip-contraction}
Let \(h:\D\to Q\) be harmonic. Then
\begin{equation}\label{eq:product-strip-infinitesimal}
 \mathcal F_Q(h(z);dh_z\xi)
 \leq
 \Hyp_\D(z;\xi)
 :=
 \frac{2|\xi|}{1-|z|^2}
\end{equation}
for every \(z\in\D\) and \(\xi\in T_z\D\). Consequently,
\begin{equation}\label{eq:product-strip-global}
 d_Q(h(z_0),h(z_1))
 \leq
 d_\D(z_0,z_1).
\end{equation}
The constant in \eqref{eq:product-strip-infinitesimal} is sharp.
\end{theorem}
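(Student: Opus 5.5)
The proof is coordinatewise: a pointwise scalar estimate for each component, followed by a length argument for the global inequality, and the paper's extremals for sharpness.

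\emph{Infinitesimal estimate.} Write \(h=(h_1,\ldots,h_n)\). Each \(h_j:\D\to(a_j,b_j)\) is real harmonic. Apply Corollary~\ref{cor:product-pointwise} in the scalar case \(N=1\), \(G=(a_j,b_j)\), where \(M_{G}=A_j\) by Theorem~\ref{thm:interval}. This gives
\[
 |\nabla h_j(z)|\leq\frac{A_j(h_j(z))}{1-|z|^2}.
\]
Equivalently, one can use Theorem~\ref{thm:general-pointwise} directly, or the strip Schwarz--Pick argument with the harmonic conjugate. For \(\xi\in T_z\D\), the \(j\)-th component of \(dh_z\xi\) is \(\nabla h_j(z)\cdot\xi\), so
\[
 \frac{2|(dh_z\xi)_j|}{A_j(h_j(z))}\leq\frac{2|\nabla h_j(z)|\,|\xi|}{A_j(h_j(z))}\leq\frac{2|\xi|}{1-|z|^2}.
\]
Taking the maximum over \(j\) gives \eqref{eq:product-strip-infinitesimal}.

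\emph{Global contraction.} Let \(\gamma\) be the Poincar\'e geodesic segment from \(z_0\) to \(z_1\). Then \(h\circ\gamma\) is a \(C^1\) curve in \(Q\) joining \(h(z_0)\) to \(h(z_1)\). Integrating the infinitesimal inequality along \(\gamma\), its \(\mathcal F_Q\)-length is at most the \(\Hyp_\D\)-length of \(\gamma\), which equals \(d_\D(z_0,z_1)\). Since \(d_Q\) is the induced path distance, \eqref{eq:product-strip-global} follows.

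\emph{Sharpness.} Sharpness means that no constant smaller than \(1\) can multiply \(\Hyp_\D\) in \eqref{eq:product-strip-infinitesimal}; by automorphism invariance it suffices to work at \(z=0\). Fix \(p\in Q\) and one index \(k\). Let \(h_k\) be the scalar interval extremal from Theorem~\ref{thm:interval}, with arc centered at \(1\), so that \(\nabla h_k(0)\) is parallel to \(e_1\) and \(|\nabla h_k(0)|=A_k(p_k)\). Let \(h_j\equiv p_j\) for \(j\neq k\); constants are harmonic and stay in \((a_j,b_j)\). With \(\xi=e_1\), the \(k\)-th term equals \(2\), which is \(\Hyp_\D(0;e_1)\), and the other terms vanish. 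Hence equality holds.

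An alternative sharpness construction takes all arcs centered at a common point, as in Theorem~\ref{thm:box}, so that every coordinate is simultaneously extremal. The main point needing care is that "sharp" refers to the constant in front of \(\Hyp_\D\), and this single-coordinate example settles it.
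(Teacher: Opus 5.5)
Your proposal is correct and follows the paper's own proof. The paper likewise applies the pointwise interval estimate coordinatewise (Theorem~\ref{thm:general-pointwise} with \(M_{(a_j,b_j)}=A_j\)), takes the maximum over \(j\), and integrates along curves for the global inequality. It also obtains sharpness from a single scalar interval extremal with the remaining components constant, exactly as you do.
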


\begin{proof}
For \(h=(h_1,\ldots,h_n)\), the pointwise interval estimate gives
\[
 \frac{2|dh_j(z)\xi|}{A_j(h_j(z))}
 \leq
 \frac{2|\xi|}{1-|z|^2}
\]
for every \(j\).  Taking the maximum proves
\eqref{eq:product-strip-infinitesimal}.  Integration along piecewise
\(C^1\) curves gives \eqref{eq:product-strip-global}.  Sharpness already
occurs when one component is a scalar interval extremal and the remaining
components are constant.
\end{proof}

The Euclidean extremal density from \eqref{eq:Euclidean-extremal-density}
and the product-strip metric satisfy
\begin{equation}\label{eq:Euclidean-product-comparison}
 \mathcal E_Q(p;v)
 =
 \frac{2|v|}{M_Q(p)}
 \leq
 \mathcal F_Q(p;v).
\end{equation}
This is \eqref{eq:l2-linfty-comparison} with
\(m_j=A_j(p_j)\).  The two structures arise from the same sharp factor estimates, but serve
different purposes: \(\mathcal E_Q\) is adapted to the Euclidean operator
norm of the full differential, while \(\mathcal F_Q\) is the coordinatewise
product metric.

Every \(S_j\) is biholomorphic to \(\D\), so \(S_Q\) is biholomorphic to the
polydisc \(\D^n\). The identification
\eqref{eq:Kob-restriction} is therefore the restriction to the real slice
of the standard Kobayashi product geometry of a polydisc. This biholomorphic equivalence
does not transfer the harmonic mapping problem itself, since postcomposition
by a nonlinear target biholomorphism need not preserve harmonicity. The
disk-valued factor problem must therefore be solved directly.

\section{The disk-valued prescribed-value factor}
\label{sec:disk-factor}

The unrestricted prescribed-value disk factor was determined in
\cite{KnezevicMateljevic2026}. We give a self-contained derivation here in
the form needed for the product theorem, its equality cases, and the metric
construction below. By target rotations, the factor depends only on
\(r=|p|\); we write
\[
 \mathcal M(r)=M_\D(p),\qquad |p|=r.
\]

If \(f=P[\Phi]\) maps \(\D\) harmonically into \(\D\), then
\(|\Phi|\leq1\) almost everywhere. After rotating the target and the parameter disk,
the prescribed-value problem becomes
\begin{equation}\label{eq:disk-reduction}
 \mathcal M(r)
 =
 2\sup\left\{
 \left|
 \int_\T \cos t\,\Phi(e^{it})\dm
 \right|:
 |\Phi|\leq1,\quad
 \int_\T\Phi\dm=r
 \right\},
 \qquad 0\leq r<1.
\end{equation}
Conversely, let \(\Phi\) satisfy the constraints in
\eqref{eq:disk-reduction}. By convexity, its Poisson integral takes values
in \(\overline\D\), and in fact in \(\D\). Indeed, if
\(f(z_0)=\omega\in\T\), then, with
\[
 P_{z_0}(e^{it})=\frac{1-|z_0|^2}{|e^{it}-z_0|^2}>0,
\]
positivity of the Poisson kernel gives
\[
 0=\int_\T P_{z_0}(e^{it})
 \bigl(1-\re(\overline\omega\Phi(e^{it}))\bigr)\dm.
\]
The integrand is nonnegative, hence \(\Phi=\omega\) almost everywhere,
contrary to \(\int_\T\Phi\dm=r<1\).

The direction of the vector
\(\int_\T\Phi(e^{it})\cos t\dm\) is part of the extremal problem. The
following support inequality controls all directions simultaneously.

\begin{lemma}\label{lem:disk-support}
For every \(\lambda\in\R\), every \(\eta\in\T\), every \(x\in\R\), and every
\(\zeta\in\overline\D\),
\begin{equation}\label{eq:disk-support}
 \re(\overline\eta\,x\zeta)
 \leq
 |\lambda+x\eta|-\lambda\re\zeta.
\end{equation}
\end{lemma}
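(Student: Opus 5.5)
The plan is to obtain the inequality from the triangle inequality, after rewriting the left-hand side as the real part of a single complex number. Since \(x\in\R\) and \(|\eta|=1\), we have \(\re(\overline\eta\,x\zeta)=\re(x\overline\eta\,\zeta)\). I will move the term \(\lambda\re\zeta\) to the left and use \(\lambda\in\R\) to write
\[
 \re(x\overline\eta\,\zeta)+\lambda\re\zeta
 =\re\bigl((\lambda+x\overline\eta)\zeta\bigr).
\]
The claim then reduces to
\[
 \re\bigl((\lambda+x\overline\eta)\zeta\bigr)\leq|\lambda+x\eta|.
\]

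The key steps, in order, are as follows.

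First, bound the left side by \(|\lambda+x\overline\eta|\,|\zeta|\). This uses \(\re w\leq|w|\) and the multiplicativity of the modulus.

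Second, use \(|\zeta|\leq1\), which holds because \(\zeta\in\overline\D\).

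Third, observe that \(\lambda+x\overline\eta\) is the complex conjugate of \(\lambda+x\eta\), since \(\lambda\) and \(x\) are real and \(\overline{\eta}\) is the conjugate of \(\eta\). Hence \(|\lambda+x\overline\eta|=|\lambda+x\eta|\).

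Chaining these three facts gives \eqref{eq:disk-support}.

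No step is a real obstacle. The only point needing care is the bookkeeping of conjugations, namely checking that \(\re(\overline\eta x\zeta)\) combines with \(\lambda\re\zeta\) into one real part. This works because \(\lambda\) is real, so \(\lambda\re\zeta=\re(\lambda\zeta)\). For later use in the equality analysis, I will also note when equality holds. If \(\lambda+x\eta\neq0\), equality requires both \(|\zeta|=1\) and \(\zeta=\overline{(\lambda+x\overline\eta)}/|\lambda+x\eta|=(\lambda+x\eta)/|\lambda+x\eta|\). Thus equality forces \(\zeta\) to be the unique unimodular point \((\lambda+x\eta)/|\lambda+x\eta|\), which is the pointwise selection that produces the extremal boundary functions \eqref{eq:intro-disk-extremal}.
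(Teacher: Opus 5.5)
Your proposal is correct and is essentially the paper's proof: you combine the left side with \(\lambda\re\zeta\) into \(\re\bigl((\lambda+x\overline\eta)\zeta\bigr)\), bound this by \(|\lambda+x\overline\eta|\) using \(|\zeta|\leq1\), and use \(|\lambda+x\overline\eta|=|\lambda+x\eta|\) because \(\lambda\) and \(x\) are real. Your equality remark is also correct: when \(\lambda+x\eta\neq0\), equality forces \(\zeta=(\lambda+x\eta)/|\lambda+x\eta|\), and this is the pointwise selection the paper uses later in Proposition~\ref{prop:disk-equality}.
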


\begin{proof}
Since \(|\zeta|\leq1\),
\[
 \re(\overline\eta\,x\zeta)+\lambda\re\zeta
 =
 \re((\lambda+\overline\eta\,x)\zeta)
 \leq
 |\lambda+\overline\eta\,x|
 =
 |\lambda+x\eta|.
\]
\end{proof}

For \(\eta\in\T\), we denote by \(S_\eta(r)\) the supremum of
\[
 \re\left(
 \overline\eta
 \int_\T\cos t\,\Phi(e^{it})\dm
 \right)
\]
over the admissible functions in \eqref{eq:disk-reduction}.  Thus
\[
 \mathcal M(r)=2\sup_{\eta\in\T}S_\eta(r).
\]
The next lemma gives a common upper bound for all target directions.
Theorem~\ref{thm:disk-main} will show that the perpendicular direction
attains this bound when \(r>0\).

\begin{lemma}
\label{lem:disk-direction}
For \(0\leq r<1\),
\begin{equation}\label{eq:disk-direction}
 \sup_{\eta\in\T}S_\eta(r)
 \leq
 \inf_{\lambda\in\R}
 \left\{
 \int_\T\sqrt{\lambda^2+\cos^2t}\dm-\lambda r
 \right\}.
\end{equation}
\end{lemma}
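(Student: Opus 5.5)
The plan is to prove \eqref{eq:disk-direction} as a weak-duality bound: we show that for each fixed \(\eta\in\T\) and each \(\lambda\in\R\),
\[
 S_\eta(r)\leq\int_\T\sqrt{\lambda^2+\cos^2t}\dm-\lambda r .
\]
Taking the supremum over \(\eta\) and the infimum over \(\lambda\) then gives the claim.

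The tool is the pointwise support inequality of Lemma~\ref{lem:disk-support}. Fix \(\eta\), \(\lambda\), and an admissible \(\Phi\), so that \(|\Phi|\leq1\) a.e. and \(\int_\T\Phi\dm=r\). For almost every \(t\), apply \eqref{eq:disk-support} with \(x=\cos t\) and \(\zeta=\Phi(e^{it})\in\overline\D\). This gives
\[
 \re\bigl(\overline\eta\cos t\,\Phi(e^{it})\bigr)\leq|\lambda+\eta\cos t|-\lambda\re\Phi(e^{it}).
\]
Both sides are bounded measurable functions, so we may integrate against \(m\). The left side integrates to \(\re(\overline\eta\int_\T\cos t\,\Phi\dm)\). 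On the right, the constraint \(\int_\T\Phi\dm=r\), with \(r\) real, makes the second term integrate to \(-\lambda r\). Hence
\[
 \re\Bigl(\overline\eta\int_\T\cos t\,\Phi\dm\Bigr)\leq\int_\T|\lambda+\eta\cos t|\dm-\lambda r .
\]
Taking the supremum over admissible \(\Phi\) gives \(S_\eta(r)\) on the left.

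The step that removes the dependence on \(\eta\) is a bound on the first term on the right. Write \(\eta=e^{i\phi}\). Then
\[
 |\lambda+\eta\cos t|^2=\lambda^2+2\lambda\cos\phi\cos t+\cos^2t .
\]
The cross term is odd under \(t\mapsto t+\pi\), since \(\cos(t+\pi)=-\cos t\), while \(\lambda^2+\cos^2 t\) is invariant under this shift. So we pair \(t\) with \(t+\pi\) and use concavity of the square root:
\[
 \tfrac12\Bigl(\sqrt{A+B}+\sqrt{A-B}\Bigr)\leq\sqrt A,
 \qquad A=\lambda^2+\cos^2t,\quad B=2\lambda\cos\phi\cos t .
\]
Here \(|B|\leq A\) holds by AM--GM, because \(2|\lambda\cos t|\leq\lambda^2+\cos^2t\). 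Since \(m\) is invariant under the shift, averaging over the pair yields
\[
 \int_\T|\lambda+\eta\cos t|\dm\leq\int_\T\sqrt{\lambda^2+\cos^2t}\dm ,
\]
which completes the chain.

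The only real subtlety is this \(\eta\)-independence step. Directions with \(\cos\phi\neq0\) could a priori give a larger value, and the symmetrization over \(t\mapsto t+\pi\) is exactly what shows that the perpendicular direction \(\eta=\pm i\) dominates. The remaining steps are routine measurability and integrability checks, which are immediate since all integrands are bounded.
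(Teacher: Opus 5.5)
Your proposal is correct and follows the paper's proof essentially step for step. You integrate the pointwise support inequality of Lemma~\ref{lem:disk-support} using the mean constraint, then remove the dependence on \(\eta\) by pairing \(\cos t\) with \(-\cos t\) and applying \(\tfrac12(\sqrt{A+B}+\sqrt{A-B})\leq\sqrt A\); this is the same fact as the paper's monotonicity of \(B\mapsto\sqrt{A+B}+\sqrt{A-B}\) on \([0,A]\), and you are only slightly more explicit about the integration step and the bound \(|B|\leq A\).
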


\begin{proof}
Lemma~\ref{lem:disk-support} gives
\[
 S_\eta(r)
 \leq
 \inf_{\lambda\in\R}
 \left\{
 \int_\T|\lambda+\eta\cos t|\dm-\lambda r
 \right\}.
\]
With \(a=\re\eta\), for \(x\geq0\),
\[
 |\lambda\pm x\eta|^2
 =
 \lambda^2+x^2\pm2\lambda xa.
\]
If \(A=\lambda^2+x^2\) and \(B=|2\lambda xa|\), then \(0\leq B\leq A\), and
\[
 B\longmapsto\sqrt{A+B}+\sqrt{A-B}
\]
is decreasing on \([0,A]\).  Hence
\[
 \frac{|\lambda+x\eta|+|\lambda-x\eta|}{2}
 \leq
 \sqrt{\lambda^2+x^2}.
\]
Pairing the values \(\cos t\) and \(-\cos t\) in the integral yields
\[
 \int_\T|\lambda+\eta\cos t|\dm
 \leq
 \int_\T\sqrt{\lambda^2+\cos^2t}\dm.
\]
Taking the infimum in \(\lambda\) and then the supremum in \(\eta\) proves
\eqref{eq:disk-direction}.
\end{proof}

The parameter in the sharp formula is determined by a strictly monotone
equation.

\begin{lemma}\label{lem:disk-lambda}
For every \(r\in(0,1)\), there is a unique \(\lambda(r)>0\) satisfying
\begin{equation}\label{eq:disk-lambda}
 r=
 \int_\T
 \frac{\lambda(r)}
 {\sqrt{\lambda(r)^2+\cos^2t}}
 \dm.
\end{equation}
Moreover, \(\lambda(r)\) is continuous and strictly increasing,
\(\lambda(r)\to0\) as \(r\to0^+\), and
\(\lambda(r)\to+\infty\) as \(r\to1^-\).
\end{lemma}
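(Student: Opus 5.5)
Define, for \(\lambda\ge 0\),
\[
 g(\lambda)=\int_\T\frac{\lambda}{\sqrt{\lambda^2+\cos^2t}}\dm .
\]
The plan is to show that \(g\) is a continuous, strictly increasing bijection of \((0,\infty)\) onto \((0,1)\). Then \(\lambda(r)=g^{-1}(r)\), and the stated properties of \(\lambda\) follow from standard facts about inverses of strictly monotone continuous functions.

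\textbf{Strict monotonicity.} For each fixed \(t\) with \(\cos t\neq 0\), the integrand \(\lambda\mapsto \lambda/\sqrt{\lambda^2+c^2}\) with \(c=\cos t\) is strictly increasing in \(\lambda\). Its derivative is \(c^2/(\lambda^2+c^2)^{3/2}>0\). Since \(\cos t=0\) only on a null set, integrating gives \(g(\lambda_1)<g(\lambda_2)\) whenever \(0\le\lambda_1<\lambda_2\).

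\textbf{Continuity.} The integrand takes values in \([0,1]\) and depends continuously on \(\lambda\) for every \(t\). Dominated convergence with dominating constant \(1\) therefore gives continuity of \(g\) on \([0,\infty)\).

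\textbf{Limits.} At \(\lambda=0\) the integrand vanishes for almost every \(t\), so \(g(0)=0\), and continuity gives \(g(\lambda)\to 0\) as \(\lambda\to0^+\). As \(\lambda\to\infty\), the integrand tends to \(1\) pointwise, and dominated convergence gives \(g(\lambda)\to1\). Moreover \(g(\lambda)<1\) for every finite \(\lambda\), because the integrand is strictly less than \(1\) off a null set. Strict monotonicity likewise gives \(g(\lambda)>0\) for \(\lambda>0\). By the intermediate value theorem, \(g\) maps \((0,\infty)\) bijectively onto \((0,1)\). This yields existence and uniqueness of \(\lambda(r)>0\) for each \(r\in(0,1)\).

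\textbf{Properties of \(\lambda(r)\).} The inverse of a continuous strictly increasing bijection between intervals is itself continuous and strictly increasing. The endpoint behaviour of \(g\) then transfers to \(\lambda\): \(\lambda(r)\to0\) as \(r\to0^+\), and \(\lambda(r)\to\infty\) as \(r\to1^-\). If \(\lambda(r)\) stayed bounded by some \(L\) as \(r\to1^-\), then \(r\le g(L)<1\), which is impossible.

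There is no real obstacle here. The only point needing care is that strict monotonicity and the strict bound \(g<1\) both use the fact that \(\{\cos t=0\}\) has measure zero.
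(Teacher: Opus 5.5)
Your proof is correct and follows essentially the same route as the paper, which also shows by dominated convergence that \(R(\lambda)=\int_\T\lambda(\lambda^2+\cos^2t)^{-1/2}\dm\) is a continuous, strictly increasing bijection of \((0,\infty)\) onto \((0,1)\) and then inverts it. The only difference is that the paper gets strict monotonicity by differentiating under the integral sign, whereas you integrate the pointwise strict monotonicity of the integrand, which avoids having to justify that differentiation.
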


\begin{proof}
For \(\lambda>0\), consider
\[
 R(\lambda)=
 \int_\T
 \frac{\lambda}{\sqrt{\lambda^2+\cos^2t}}
 \dm.
\]
The function \(R\) is continuous by dominated convergence. For
\(\lambda>0\), differentiation under the integral sign gives
\[
 R'(\lambda)=
 \int_\T
 \frac{\cos^2t}{(\lambda^2+\cos^2t)^{3/2}}\dm>0,
\]
since the integrand is positive except at the two points where \(\cos t=0\).
Thus \(R\) is strictly increasing. Dominated convergence also gives
\(R(\lambda)\to0\) as \(\lambda\to0^+\) and
\(R(\lambda)\to1\) as \(\lambda\to+\infty\). Hence \(R\) maps
\((0,\infty)\) bijectively onto \((0,1)\), and its inverse \(\lambda(r)\)
is continuous and strictly increasing.
\end{proof}

The exact disk factor is given by the following theorem.

\begin{theorem}\label{thm:disk-main}
Let \(f:\D\to\D\) be harmonic and let \(|f(0)|=r\).  If \(r=0\), then
\begin{equation}\label{eq:M0}
 \|df_0\|\leq\frac4\pi,
\end{equation}
and the constant is sharp.  If \(0<r<1\), let \(\lambda=\lambda(r)\) be
defined by \eqref{eq:disk-lambda}. Then
\begin{equation}\label{eq:disk-main}
 \|df_0\|
 \leq
 \mathcal M(r)
 :=
 2\int_\T
 \frac{\cos^2t}
 {\sqrt{\lambda^2+\cos^2t}}
 \dm,
\end{equation}
and the bound is sharp.
\end{theorem}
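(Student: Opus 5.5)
The plan is to combine the reduction \eqref{eq:disk-reduction} with the dual bound of Lemma~\ref{lem:disk-direction}. Then I will show that the infimum on the right-hand side of \eqref{eq:disk-direction} equals half of the claimed constant, and that it is attained by an explicit boundary function.

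\emph{Upper bound.} Define
\[
 J(\lambda)=\int_\T\sqrt{\lambda^2+\cos^2t}\dm-\lambda r .
\]
By \eqref{eq:disk-reduction}, \(\mathcal M(r)=2\sup_\eta S_\eta(r)\), and Lemma~\ref{lem:disk-direction} bounds this supremum by \(\inf_\lambda J(\lambda)\).

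For \(r=0\), take \(\lambda=0\). This gives \(\int_\T|\cos t|\dm=2/\pi\), so \(\|df_0\|\le 4/\pi\).

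For \(0<r<1\), the function \(J\) is convex in \(\lambda\), and
\[
 J'(\lambda)=\int_\T \lambda/\sqrt{\lambda^2+\cos^2t}\dm-r .
\]
This derivative vanishes exactly at \(\lambda=\lambda(r)\) from Lemma~\ref{lem:disk-lambda}. Substituting \(r\) from \eqref{eq:disk-lambda}, the minimum value is
\[
 J(\lambda)=\int_\T\frac{\lambda^2+\cos^2t-\lambda^2}{\sqrt{\lambda^2+\cos^2t}}\dm=\int_\T\frac{\cos^2t}{\sqrt{\lambda^2+\cos^2t}}\dm .
\]
Doubling this gives \eqref{eq:disk-main}. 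In fact we only need the value of \(J\) at \(\lambda(r)\), not the claim that it is the minimum, since any \(\lambda\) gives an upper bound.

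\emph{Sharpness.} Here I read off the equality case of Lemma~\ref{lem:disk-support}. Take \(\eta=i\) (the direction perpendicular to \(p\)) and set
\[
 \Phi(e^{it})=\frac{\lambda+i\cos t}{\sqrt{\lambda^2+\cos^2t}} ,
\]
which is unimodular. This is exactly the \(\zeta\) aligned with \(\overline{\lambda+\overline\eta x}=\lambda+ix\) at \(x=\cos t\), so \eqref{eq:disk-support} is an equality pointwise. Moreover \(|\lambda+i\cos t|=\sqrt{\lambda^2+\cos^2t}\), so the pairing step in Lemma~\ref{lem:disk-direction} is also an equality, since \(\re\eta=0\) gives \(B=0\).

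Next I check admissibility. The mean of the imaginary part vanishes because \(\cos(t+\pi)=-\cos t\), so \(\int_\T\Phi\dm=r\) by \eqref{eq:disk-lambda}. Computing directly,
\[
 \int_\T\cos t\,\Phi\dm=i\int_\T\frac{\cos^2t}{\sqrt{\lambda^2+\cos^2t}}\dm ,
\]
because the real part \(\int\lambda\cos t/\sqrt{\cdot}\) vanishes by the same symmetry. By Lemma~\ref{lem:complex-poisson} with \(\theta=0\), \(f=P[\Phi]\) then has \(|df_0(1)|\) equal to the claimed constant. The preliminary argument after \eqref{eq:disk-reduction} shows that \(f(\D)\subset\D\), and a target rotation handles a general \(f(0)=re^{i\beta}\).

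For \(r=0\), take \(\Phi=i\,\sgn(\cos t)\). Its mean is zero, and \(2\int|\cos t|\dm=4/\pi\). The Poisson integral maps into \(\D\) because \(\Phi\) is not a.e.\ constant.

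The main obstacle is only bookkeeping. One must confirm that \eqref{eq:disk-reduction} correctly encodes the operator norm, via \eqref{eq:complex-poisson-norm} together with rotation of the parameter disk to \(\theta=0\) and of the target to \(f(0)=r\). Everything else is the explicit verification above.
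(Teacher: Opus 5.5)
Your proposal is correct and follows the paper's proof essentially step for step. You use the dual bound of Lemma~\ref{lem:disk-direction} at \(\lambda=\lambda(r)\) (and \(\lambda=0\) when \(r=0\)) for the upper estimate, and the unimodular boundary functions \(\Phi_\lambda\) and \(i\,\sgn(\cos t)\) for sharpness; your remark that only the value of the dual functional at \(\lambda(r)\) is needed, not its minimality, is a valid small simplification of the paper's convexity argument.
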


\begin{proof}
The case \(r=0\) follows from \eqref{eq:complex-poisson-norm}:
\[
 \|df_0\|
 \leq
 2\int_\T|\cos t|\dm
 =
 \frac4\pi.
\]
Equality is attained by the Poisson integral of
\[
 \Phi(e^{it})=i\,\sgn(\cos t).
\]

For \(0<r<1\), consider
\[
 G(\mu)=
 \int_\T\sqrt{\mu^2+\cos^2t}\dm-\mu r.
\]
The function \(G\) is convex and, for \(\mu\ne0\),
\[
 G'(\mu)=
 \int_\T
 \frac{\mu}{\sqrt{\mu^2+\cos^2t}}\dm-r.
\]
For \(\mu<0\), this derivative is negative, while the right derivative at
\(0\) equals \(-r<0\).  On \((0,\infty)\), Lemma~\ref{lem:disk-lambda}
shows that \(G'(\mu)\) vanishes exactly at \(\mu=\lambda(r)\), is negative
before that point, and is positive afterwards.  Thus \(G\) has the unique
minimizer \(\lambda(r)>0\).  Lemma~\ref{lem:disk-direction} therefore gives
\[
 \frac{\mathcal M(r)}2
 \leq
 G(\lambda)
 =
 \int_\T
 \frac{\cos^2t}{\sqrt{\lambda^2+\cos^2t}}
 \dm.
\]
Here \(\mathcal M(r)\) denotes the supremum in
\eqref{eq:disk-reduction}.

For the reverse inequality, we use
\begin{equation}\label{eq:disk-extremal-boundary-real}
 \Phi_\lambda(e^{it})
 =
 \frac{\lambda+i\cos t}
 {\sqrt{\lambda^2+\cos^2t}}.
\end{equation}
Then \(|\Phi_\lambda|=1\) almost everywhere and, by symmetry together with
\eqref{eq:disk-lambda},
\[
 \int_\T\Phi_\lambda\dm=r.
\]
Its Poisson integral maps \(\D\) into \(\D\), because the boundary function
is unimodular and not almost everywhere constant. Moreover,
\[
 \int_\T\cos t\,\Phi_\lambda(e^{it})\dm
 =
 i\int_\T
 \frac{\cos^2t}{\sqrt{\lambda^2+\cos^2t}}\dm.
\]
Formula \eqref{eq:complex-poisson-norm} gives the reverse inequality and
hence equality in \eqref{eq:disk-main}.
\end{proof}

The argument also determines all equality cases.

\begin{proposition}\label{prop:disk-equality}
Let \(f:\D\to\D\) be harmonic.

If \(f(0)=0\), equality in \eqref{eq:M0} holds if and only if, up to a null
set, its radial boundary function has the form
\begin{equation}\label{eq:disk-zero-equality}
 \Phi(e^{it})
 =
 \omega\,\sgn(\cos(t-\alpha)),
 \qquad
 \omega\in\T,\quad \alpha\in\R.
\end{equation}

If \(f(0)=p=re^{i\beta}\ne0\), equality in
\eqref{eq:disk-main} holds if and only if, up to a null set,
\begin{equation}\label{eq:disk-equality}
 \Phi(e^{it})
 =
 e^{i\beta}
 \frac{\lambda(r)+i\cos(t-\alpha)}
 {\sqrt{\lambda(r)^2+\cos^2(t-\alpha)}}
\end{equation}
for some \(\alpha\in\R\).
\end{proposition}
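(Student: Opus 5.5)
The plan is to trace equality through the chain of inequalities used in the proof of Theorem~\ref{thm:disk-main}, first reducing by rotations. Target rotation \(f\mapsto e^{-i\beta}f\) lets us assume \(p=r\ge0\). Choose a unit vector \(e^{i\alpha}\) attaining \(\|df_0\|\). Precomposing with the rotation \(z\mapsto e^{i\alpha}z\) replaces \(\Phi(e^{it})\) by \(\Phi(e^{i(t+\alpha)})\), so we may assume the norm is attained at \(\theta=0\). Equation~\eqref{eq:complex-directional} then gives
\[
 \|df_0\|=2\Bigl|\int_\T\cos t\,\Phi\dm\Bigr|.
\]
Finally, choose \(\eta\in\T\) with \(\overline\eta\int_\T\cos t\,\Phi\dm\ge0\). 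Undoing the rotations at the end produces the parameters \(\alpha\) and \(\beta\) in \eqref{eq:disk-zero-equality} and \eqref{eq:disk-equality}. The converse direction is the computation already made in the proof of Theorem~\ref{thm:disk-main}, together with its rotated versions.

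\emph{Case \(r>0\).} Put \(\lambda=\lambda(r)\). Equality forces equality in
\[
 \re\Bigl(\overline\eta\int_\T\cos t\,\Phi\dm\Bigr)\le\int_\T|\lambda+\eta\cos t|\dm-\lambda r\le G(\lambda),
\]
so both steps are equalities.

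\textbf{Step 1 (the direction \(\eta\)).} Equality in the pairing inequality of Lemma~\ref{lem:disk-direction} is needed for almost every \(t\). Strict monotonicity of \(B\mapsto\sqrt{A+B}+\sqrt{A-B}\) forces \(B=0\). Since \(\lambda>0\) and \(\cos t\ne0\) almost everywhere, this gives \(\re\eta=0\), that is, \(\eta=\pm i\).

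\textbf{Step 2 (the boundary function).} Integrating \eqref{eq:disk-support} with \(x=\cos t\), \(\zeta=\Phi(e^{it})\), and using \(\int_\T\re\Phi\dm=r\), equality requires, for almost every \(t\),
\[
 \re\bigl((\lambda+\overline\eta\cos t)\Phi\bigr)=|\lambda+\overline\eta\cos t|.
\]
Because \(|\Phi|\le1\) and \(\lambda+\overline\eta\cos t\ne0\), this forces
\[
 \Phi=\frac{\overline{\lambda+\overline\eta\cos t}}{|\lambda+\overline\eta\cos t|}=\frac{\lambda+\eta\cos t}{\sqrt{\lambda^2+\cos^2t}}.
\]
For \(\eta=i\) this is \(\Phi_\lambda\). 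For \(\eta=-i\) it is \(\Phi_\lambda(e^{i(t+\pi)})\), which is absorbed by replacing \(\alpha\) with \(\alpha+\pi\).

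\emph{Case \(r=0\).} Equality in \(|\int_\T\cos t\,\Phi\dm|\le\int_\T|\cos t|\dm\) with \(|\Phi|\le1\) requires
\[
 \re\bigl(\overline\eta\,\Phi\cos t\bigr)=|\cos t|\quad\text{a.e.}
\]
This forces \(\Phi=\eta\,\sgn(\cos t)\) almost everywhere. The form \eqref{eq:disk-zero-equality} follows after undoing the rotation, with \(\omega\) determined by \(\eta\). The mean-zero condition holds automatically.

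The main obstacle I expect is the almost-everywhere bookkeeping: the integrated inequalities must be turned into pointwise equalities. This works because each integrand difference is nonnegative, which holds by Lemma~\ref{lem:disk-support} and by the pairing argument in Lemma~\ref{lem:disk-direction}. A second point to watch is that the pairing inequality is strict unless \(\lambda\re\eta\cos t=0\). That is exactly where \(\lambda(r)>0\), from Lemma~\ref{lem:disk-lambda}, is used.
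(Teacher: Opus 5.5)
Your proposal is correct and follows the paper's proof essentially step for step: the same target and parameter rotations, the same chain through Lemma~\ref{lem:disk-support} and the paired comparison of Lemma~\ref{lem:disk-direction} with strictness forcing \(\eta=\pm i\), pointwise equality in the support inequality determining \(\Phi\), and the \(r=0\) case via equality in \(\bigl|\int_\T\cos t\,\Phi\dm\bigr|\le\int_\T|\cos t|\dm\). The only cosmetic difference is that you solve for \(\Phi\) with \(\eta=\pm i\) and then absorb \(\eta=-i\) into \(\alpha\), whereas the paper first normalizes to \(\eta=i\) by \(t\mapsto t+\pi\).
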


\begin{proof}
For \(r=0\), choose \(\alpha\in\R\) so that
\[
 \|df_0\|=|df_0(e^{i\alpha})|.
\]
Such an \(\alpha\) exists by \eqref{eq:complex-poisson-norm}. Equality in
\eqref{eq:M0} then forces equality in
\[
 \left|
 \int_\T\Phi(e^{it})\cos(t-\alpha)\dm
 \right|
 \leq
 \int_\T|\cos(t-\alpha)|\dm.
\]
Hence \(|\Phi|=1\) almost everywhere and
\(\Phi(e^{it})\sgn(\cos(t-\alpha))\) has constant argument almost everywhere.
This gives \eqref{eq:disk-zero-equality}.

For \(0<r<1\), a target rotation reduces the problem to \(p=r\), and a
rotation of the parameter disk can be used so that
\(\|df_0\|=|df_0(e_1)|\). With
\[
 J=\int_\T\cos t\,\Phi(e^{it})\dm,
 \qquad \eta=J/|J|,
 \qquad \lambda=\lambda(r),
\]
the extremality assumption gives \(2|J|=\mathcal M(r)>0\).
Lemma~\ref{lem:disk-support} and the paired comparison in
Lemma~\ref{lem:disk-direction} then give
\[
 \frac{\mathcal M(r)}2
 \leq \int_\T|\lambda+\eta\cos t|\dm-\lambda r
 \leq \int_\T\sqrt{\lambda^2+\cos^2t}\dm-\lambda r
 =\frac{\mathcal M(r)}2.
\]
If \(\re\eta\ne0\), the paired comparison is strict whenever
\(|\cos t|>0\), so the second integral inequality is strict. Hence
\(\eta=\pm i\). Replacing the parameter \(t\) by \(t+\pi\) allows
\(\eta=i\). Equality in the first inequality now gives
\[
 \int_\T\left(
 \sqrt{\lambda^2+\cos^2t}
 -\re\bigl((\lambda-i\cos t)\Phi(e^{it})\bigr)
 \right)\dm=0.
\]
The integrand is nonnegative, so it vanishes almost everywhere. Since
\(\lambda-i\cos t\ne0\) and \(|\Phi|\leq1\), this forces
\[
 \Phi(e^{it})
 =\frac{\lambda+i\cos t}{\sqrt{\lambda^2+\cos^2t}}
\]
almost everywhere. Restoring the rotations gives
\eqref{eq:disk-equality}. The converse follows from the construction in
Theorem~\ref{thm:disk-main}.
\end{proof}

For the extremal in \eqref{eq:disk-equality}, the Poisson derivative formulas
give
\begin{equation}\label{eq:disk-extremal-differential}
 df_0(e^{i\theta})
 =i e^{i\beta}\mathcal M(r)\cos(\theta-\alpha).
\end{equation}
Indeed, the boundary function is even in \(t-\alpha\), so the derivative in
the perpendicular direction vanishes. For \eqref{eq:disk-zero-equality}, the
corresponding formula is
\begin{equation}\label{eq:disk-zero-extremal-differential}
 df_0(e^{i\theta})
 =\frac4\pi\omega\cos(\theta-\alpha).
\end{equation}
Thus in both cases the unique maximizing unoriented line is
\(\R e^{i\alpha}\).

The sharp constant can also be expressed in terms of complete elliptic integrals. We use the standard notation
\[
 K(k)=\int_0^{\pi/2}\frac{d\theta}{\sqrt{1-k^2\sin^2\theta}},
 \qquad
 E(k)=\int_0^{\pi/2}\sqrt{1-k^2\sin^2\theta}\,d\theta,
 \qquad 0<k<1.
\]

\begin{proposition}\label{prop:disk-elliptic}
For \(0<r<1\), let \(\lambda=\lambda(r)\) and
\[
 k=(1+\lambda^2)^{-1/2}.
\]
Then
\begin{equation}\label{eq:elliptic-lambda}
 r=
 \frac{2\lambda}{\pi\sqrt{1+\lambda^2}}K(k),
\end{equation}
and
\begin{equation}\label{eq:elliptic-M}
 \mathcal M(r)
 =
 \frac{4\sqrt{1+\lambda^2}}{\pi}
 \left(E(k)-(1-k^2)K(k)\right).
\end{equation}
\end{proposition}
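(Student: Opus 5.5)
The plan is to reduce both integrals to integrals over a quarter period and then substitute so that the standard elliptic integrands appear. By the symmetries of \(\cos^2 t\) (evenness and \(t\mapsto\pi-t\)), the normalized integrals satisfy
\[
 \int_\T g(\cos^2t)\dm=\frac{2}{\pi}\int_0^{\pi/2}g(\cos^2t)\,dt,
\]
and after the change of variable \(t\mapsto\pi/2-t\) the integrand depends on \(\sin^2\theta\). Then
\[
 \lambda^2+\sin^2\theta=(1+\lambda^2)-\cos^2\theta
 =(1+\lambda^2)\bigl(1-k^2\cos^2\theta\bigr),
\]
so one more substitution \(\theta\mapsto\pi/2-\theta\) gives \((1+\lambda^2)(1-k^2\sin^2\theta)\). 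We use the unreduced form with \(\cos^2\) directly: \(\sqrt{\lambda^2+\cos^2t}=\sqrt{1+\lambda^2}\,\sqrt{1-k^2\sin^2t}\), because \(\lambda^2+\cos^2t=1+\lambda^2-\sin^2t\). This is the key identity and gives \(1-k^2=\lambda^2k^2\).

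For \eqref{eq:elliptic-lambda}, we insert this into \eqref{eq:disk-lambda}:
\[
 r=\frac{2}{\pi}\int_0^{\pi/2}\frac{\lambda\,dt}{\sqrt{1+\lambda^2}\sqrt{1-k^2\sin^2t}}
 =\frac{2\lambda}{\pi\sqrt{1+\lambda^2}}K(k).
\]

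For \eqref{eq:elliptic-M}, we write \(\cos^2t=(\lambda^2+\cos^2t)-\lambda^2\), so that
\[
 \frac{\cos^2t}{\sqrt{\lambda^2+\cos^2t}}=\sqrt{\lambda^2+\cos^2t}-\frac{\lambda^2}{\sqrt{\lambda^2+\cos^2t}}.
\]
Integrating over \([0,\pi/2]\) with the quarter-period reduction, the first term gives \(\sqrt{1+\lambda^2}\,E(k)\) and the second gives \(\lambda^2K(k)/\sqrt{1+\lambda^2}\). Since \(\lambda^2/(1+\lambda^2)=1-k^2\), we obtain
\[
 \mathcal M(r)=2\cdot\frac{2}{\pi}\sqrt{1+\lambda^2}\bigl(E(k)-(1-k^2)K(k)\bigr),
\]
which is \eqref{eq:elliptic-M}.

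The only delicate point is bookkeeping: the factor \(2/\pi\) from normalized measure on a quarter period (four equal quarters, each \(1/(2\pi)\) times \(\pi/2\)...). Concretely, \(\int_\T g\dm=\frac{4}{2\pi}\int_0^{\pi/2}g\,dt\), which equals \(\frac{2}{\pi}\int_0^{\pi/2}g\,dt\). With that, no step is a genuine obstacle, and \(0<k<1\) holds since \(\lambda>0\).
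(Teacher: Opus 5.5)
Your proposal is correct and follows the paper's proof essentially step for step. Both use the pointwise identity \(\lambda^2+\cos^2t=(1+\lambda^2)(1-k^2\sin^2t)\), the quarter-period reduction \(\int_\T g\dm=\frac{2}{\pi}\int_0^{\pi/2}g\,dt\), and the splitting \(\cos^2t=(\lambda^2+\cos^2t)-\lambda^2\) together with \(\lambda^2/(1+\lambda^2)=1-k^2\). The back-and-forth substitutions \(t\mapsto\pi/2-t\) in your first paragraph are unnecessary, since that identity already holds pointwise in \(t\).
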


\begin{proof}
Let \(a=\sqrt{1+\lambda^2}\) and \(k=a^{-1}\). Since
\[
 \lambda^2+\cos^2t=a^2(1-k^2\sin^2t),
\]
symmetry gives
\[
 \int_\T\frac{\dm}{\sqrt{\lambda^2+\cos^2t}}
 =\frac{2}{\pi a}K(k),
 \qquad
 \int_\T\sqrt{\lambda^2+\cos^2t}\dm
 =\frac{2a}{\pi}E(k).
\]
The first identity gives \eqref{eq:elliptic-lambda}. Using
\(\cos^2t=(\lambda^2+\cos^2t)-\lambda^2\), we obtain
\[
 \mathcal M(r)
 =\frac{4a}{\pi}E(k)-\frac{4\lambda^2}{\pi a}K(k).
\]
Since \(\lambda^2/a^2=1-k^2\), this is
\eqref{eq:elliptic-M}.
\end{proof}

The following explicit boundary function gives the lower bound needed for the
metric comparison.

\begin{lemma}\label{lem:disk-lower}
For \(0\leq r<1\),
\begin{equation}\label{eq:disk-lower}
 \mathcal M(r)
 \geq
 \frac4\pi\sqrt{1-r^2}.
\end{equation}
In particular,
\begin{equation}\label{eq:disk-vs-holo}
 \mathcal M(r)>1-r^2.
\end{equation}
\end{lemma}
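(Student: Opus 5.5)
The plan is to exhibit an explicit admissible boundary function for \eqref{eq:disk-reduction} and then use the reduction in the form
\[
 \mathcal M(r)\geq 2\left|\int_\T\cos t\,\Phi(e^{it})\dm\right|.
\]
The natural test function is the unimodular one
\[
 \Phi(e^{it})=r+i\sqrt{1-r^2}\,\sgn(\cos t).
\]
It satisfies \(|\Phi|=1\) almost everywhere. Since \(\int_\T\sgn(\cos t)\dm=0\), its mean is \(\int_\T\Phi\dm=r\). By the discussion following \eqref{eq:disk-reduction}, its Poisson integral maps \(\D\) into \(\D\) and takes the value \(r\) at the origin. (Alternatively, one can simply check that \(\Phi\) is not almost everywhere constant on \(\T\) when \(r<1\).)

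Next I compute the derivative integral. Using \(\int_\T\cos t\dm=0\) and \(\int_\T|\cos t|\dm=2/\pi\), I get
\[
 \int_\T\cos t\,\Phi(e^{it})\dm=\frac{2i}{\pi}\sqrt{1-r^2}.
\]
Hence \(2|J|=\frac4\pi\sqrt{1-r^2}\), and \eqref{eq:complex-poisson-norm} gives \eqref{eq:disk-lower}. The case \(r=0\) is consistent with \(\mathcal M(0)=4/\pi\) from Theorem~\ref{thm:disk-main}.

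For \eqref{eq:disk-vs-holo}, it suffices to show \(\frac4\pi\sqrt{1-r^2}>1-r^2\). Because \(0<1-r^2\leq 1\), we have \(\sqrt{1-r^2}\geq 1-r^2\). Since \(4/\pi>1\) and \(1-r^2>0\), this gives \(\frac4\pi\sqrt{1-r^2}\geq\frac4\pi(1-r^2)>1-r^2\).

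There is no real obstacle here. The only points that need checking are admissibility and the fact that the mean is exactly \(r\); both follow from the symmetry \(t\mapsto \pi-t\), which flips the sign of \(\cos t\).
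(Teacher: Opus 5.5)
Your proposal is correct and follows the paper's proof essentially verbatim: you use the same unimodular test function \(r+i\sqrt{1-r^2}\,\sgn(\cos t)\) with mean \(r\), compute \(\int_\T\cos t\,\Phi\dm=\frac{2i}{\pi}\sqrt{1-r^2}\), and conclude via \eqref{eq:complex-poisson-norm}. For the strict inequality you use \(\sqrt{1-r^2}\geq 1-r^2\) together with \(4/\pi>1\), whereas the paper multiplies \(\sqrt{1-r^2}\leq1<4/\pi\) by \(\sqrt{1-r^2}\); the two arguments are equivalent and both elementary. (Minor point: the symbol \(J\) in your write-up is never defined; it should be introduced as \(\int_\T\cos t\,\Phi\dm\).)
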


\begin{proof}
The boundary function
\[
 \Phi_r(e^{it})
 =
 r+i\sqrt{1-r^2}\,\sgn(\cos t)
\]
is unimodular and has mean \(r\).  Its Poisson integral is therefore an
admissible harmonic self-map of \(\D\), and
\[
 \left|
 \int_\T\cos t\,\Phi_r(e^{it})\dm
 \right|
 =
 \sqrt{1-r^2}\int_\T|\cos t|\dm
 =
 \frac2\pi\sqrt{1-r^2}.
\]
Formula \eqref{eq:complex-poisson-norm} proves
\eqref{eq:disk-lower}.  Since \(0<\sqrt{1-r^2}\leq1<4/\pi\), multiplication by
\(\sqrt{1-r^2}\) gives
\[
 \frac4\pi\sqrt{1-r^2}>1-r^2.
\]
\end{proof}

The exact formula also yields the boundary asymptotics
\[
 \mathcal M(r)
 \sim
 2\sqrt{1-r}
 \sim
 \sqrt2\sqrt{1-r^2},
 \qquad r\to1^-.
\]
Indeed, as \(\lambda\to+\infty\), uniformly in \(t\),
\[
 \frac{\lambda}{\sqrt{\lambda^2+\cos^2t}}
 =1-\frac{\cos^2t}{2\lambda^2}+O(\lambda^{-4}),
\]
so \eqref{eq:disk-lambda} and
\(\int_\T\cos^2t\dm=1/2\) give
\[
 r=1-\frac{1}{4\lambda^2}+O(\lambda^{-4}).
\]
Similarly,
\[
 \mathcal M(r)
 =\frac1\lambda+O(\lambda^{-3}),
\]
and the stated asymptotics follow.

\section{Polydisc targets and the sharp harmonic product metric}
\label{sec:polydisc}

Once the disk factor is known, the product principle gives the sharp polydisc
constant without any further target-specific extremal calculation. Throughout
this section, the operator norm of the full differential uses the standard
Euclidean norm on \(\C^n\simeq\R^{2n}\); the corresponding
\(\ell^\infty\)-type product structure appears separately in the metric
statements below.

\begin{theorem}
\label{thm:polydisc}
Let
\[
 F=(f_1,\ldots,f_n):\D\to\D^n
\]
be harmonic and let
\(F(0)=p=(p_1,\ldots,p_n)\). Then
\begin{equation}\label{eq:polydisc-main}
 \|dF_0\|
 \leq
 M_{\D^n}(p)
 :=
 \left(
 \sum_{j=1}^n\mathcal M(|p_j|)^2
 \right)^{1/2},
\end{equation}
and the constant is sharp for every \(p\in\D^n\).

Equality holds if and only if every component \(f_j\) is extremal in the
disk-valued problem at \(p_j\) and all differentials \(d(f_j)_0\) attain
their norms on a common unit vector in \(\R^2\).
\end{theorem}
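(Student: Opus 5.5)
The plan is to derive Theorem~\ref{thm:polydisc} directly from the product principle, with no new extremal calculation. We take \(N=n\), \(G_j=\D\subset\R^2\simeq\C\) and \(m_j=2\). Then \(G=\D^n\subset\R^{2n}\), and its Euclidean product norm is the standard Euclidean norm on \(\C^n\). By rotation invariance of the target, which was recorded at the start of Section~\ref{sec:disk-factor}, we have \(M_\D(p_j)=\mathcal M(|p_j|)\). Theorem~\ref{thm:disk-main} identifies this quantity. For \(p_j=0\) it equals \(4/\pi\). For \(p_j\ne0\) it is given by the integral formula \eqref{eq:disk-main} with \(\lambda=\lambda(|p_j|)\).

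Substituting these factor constants into Theorem~\ref{thm:product-principle} gives
\[
 M_{\D^n}(p)^2=\sum_{j=1}^n\mathcal M(|p_j|)^2 .
\]
By definition of \(M_{\D^n}(p)\) as a supremum, this yields both the estimate \eqref{eq:polydisc-main} for every admissible \(F\) and sharpness at every \(p\in\D^n\).

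Sharpness can also be made explicit without appealing to near-extremals, because every disk factor is attained. By Proposition~\ref{prop:disk-equality}, each factor has extremals whose boundary functions are given by \eqref{eq:disk-equality} or \eqref{eq:disk-zero-equality}. We choose the same parameter \(\alpha\) in every factor. Then \eqref{eq:disk-extremal-differential} and \eqref{eq:disk-zero-extremal-differential} show that every \(d(f_j)_0\) attains its norm on \(e^{i\alpha}\). Lemma~\ref{lem:block-operator} then gives equality.

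The equality statement is Theorem~\ref{thm:product-equality} applied to the same data. Condition~(1) of that theorem says that each \(f_j\) is extremal in the disk-valued problem at \(p_j\). Condition~(2) is the common maximizing unit vector. If desired, Proposition~\ref{prop:disk-equality} can then be used to restate the result in terms of boundary data. Each disk extremal has a unique maximizing unoriented line \(\R e^{i\alpha_j}\), so the common-direction condition is equivalent to all \(\alpha_j\) agreeing modulo \(\pi\).

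The only step requiring any care is confirming that the hypotheses of Theorem~\ref{thm:product-principle} match. The disk is a bounded domain in \(\R^2\). Complex-valued harmonic maps into \(\D\) are exactly the \(\R^2\)-valued componentwise harmonic maps. The operator norm on \(\C^n\simeq\R^{2n}\) is the Euclidean block norm used in Lemma~\ref{lem:block-operator}. Once these points are checked, the proof is a direct substitution.
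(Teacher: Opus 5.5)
Your proposal is correct and matches the paper's proof, which simply applies Theorems~\ref{thm:product-principle} and~\ref{thm:product-equality} to \(n\) copies of \(\D\) together with Theorem~\ref{thm:disk-main}. Your explicit attained extremals with a common parameter \(\alpha\) agree with the equality family the paper writes out immediately after the theorem.
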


\begin{proof}
The formula and the equality statement are
Theorems~\ref{thm:product-principle} and
\ref{thm:product-equality} applied to \(n\) copies of the unit disk, together
with Theorem~\ref{thm:disk-main}.
\end{proof}

The equality family can be written explicitly. A common maximizing line in
\(\R^2\) is represented by \(\alpha\in\R\). If
\(p_j=r_je^{i\beta_j}\ne0\), the radial boundary function of the \(j\)-th
component has the form
\begin{equation}\label{eq:polydisc-boundary-nonzero}
 \Phi_j(e^{it})
 =
 e^{i\beta_j}
 \frac{\lambda(r_j)+i\sigma_j\cos(t-\alpha)}
 {\sqrt{\lambda(r_j)^2+\cos^2(t-\alpha)}},
 \qquad \sigma_j\in\{-1,1\}.
\end{equation}
Proposition~\ref{prop:disk-equality} gives an individual parameter
\(\alpha_j\) for each nonzero component.  The common-line condition in
Theorem~\ref{thm:polydisc} is equivalent to
\(\alpha_j\equiv\alpha\pmod{\pi}\), which is precisely the freedom
represented by \(\sigma_j\).  If \(p_j=0\), the corresponding boundary function
has the form
\begin{equation}\label{eq:polydisc-boundary-zero}
 \Phi_j(e^{it})
 =
 \omega_j\,\sgn(\cos(t-\alpha)),
 \qquad \omega_j\in\T,
\end{equation}
where a change of sign is absorbed into \(\omega_j\).  Conversely, the
Poisson extension of every boundary function obtained by combining
\eqref{eq:polydisc-boundary-nonzero} and
\eqref{eq:polydisc-boundary-zero} is extremal.  Thus every polydisc extremal has radial boundary values in the distinguished
boundary
\[
 \partial_s\D^n=\T^n
\]
almost everywhere.

The common maximizing line also determines the differential of every
extremal polydisc map.

\begin{corollary}\label{cor:polydisc-one-dimensional}
If \(F:\D\to\D^n\) attains equality in
\eqref{eq:polydisc-main}, then there are \(\alpha\in\R\) and a nonzero vector
\(W=(W_1,\ldots,W_n)\in\C^n\) such that
\begin{equation}\label{eq:polydisc-one-dimensional}
 dF_0(e^{i\theta})=W\cos(\theta-\alpha),
 \qquad \theta\in\R,
\end{equation}
and
\[
 |W_j|=\mathcal M(|p_j|),
 \qquad j=1,\ldots,n.
\]
Consequently, the real differential \(dF_0\) has one-dimensional image.
\end{corollary}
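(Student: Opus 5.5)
The plan is to combine the equality criterion of Theorem~\ref{thm:polydisc}, which rests on Theorem~\ref{thm:product-equality}, with the explicit differential formulas \eqref{eq:disk-extremal-differential} and \eqref{eq:disk-zero-extremal-differential}.

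First, suppose \(F\) attains equality in \eqref{eq:polydisc-main}. Then every component \(f_j\) is extremal in the disk problem at \(p_j\), and there is a unit vector \(v=e^{i\alpha}\) with \(|d(f_j)_0v|=\|d(f_j)_0\|\) for all \(j\). By Proposition~\ref{prop:disk-equality}, the radial boundary function of \(f_j\) has the form \eqref{eq:disk-equality} with some parameter \(\alpha_j\) when \(p_j\ne0\), or the form \eqref{eq:disk-zero-equality} with parameters \(\omega_j,\alpha_j\) when \(p_j=0\). In either case, \eqref{eq:disk-extremal-differential} or \eqref{eq:disk-zero-extremal-differential} gives
\[
 d(f_j)_0(e^{i\theta})=W_j'\cos(\theta-\alpha_j),
\]
where \(W_j'=ie^{i\beta_j}\mathcal M(r_j)\) or \(W_j'=(4/\pi)\omega_j\). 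In both cases \(|W_j'|=\mathcal M(|p_j|)\), using \(\mathcal M(0)=4/\pi\).

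The key step is to compare the individual lines \(\R e^{i\alpha_j}\) with the common direction \(\alpha\). Since \(\mathcal M(|p_j|)>0\), the function \(\theta\mapsto|\cos(\theta-\alpha_j)|\) attains its maximum exactly when \(\theta\equiv\alpha_j\pmod\pi\). So the common maximizing vector forces \(\alpha_j\equiv\alpha\pmod\pi\) for every \(j\). Writing \(\cos(\theta-\alpha_j)=\sigma_j\cos(\theta-\alpha)\) with \(\sigma_j\in\{-1,1\}\), we set \(W_j=\sigma_jW_j'\). This gives \eqref{eq:polydisc-one-dimensional} with \(|W_j|=\mathcal M(|p_j|)\), and \(W\ne0\) because every component of \(W\) is nonzero.

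Finally, \(dF_0\) is a real linear map \(\R^2\to\C^n\). By \eqref{eq:polydisc-one-dimensional}, its image is contained in \(\R W\), and it contains \(W\) itself (take \(\theta=\alpha\)). Hence the image is exactly the real line \(\R W\), which is one-dimensional.

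The only delicate point is the bookkeeping of the sign \(\sigma_j\) and of the \(p_j=0\) case. No component can vanish, so the common direction always pins down each \(\alpha_j\) modulo \(\pi\), and nothing else is needed.
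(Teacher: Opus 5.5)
Your proposal is correct and follows essentially the paper's argument. It combines the equality criterion of Theorem~\ref{thm:polydisc} with Proposition~\ref{prop:disk-equality} and the differential formulas \eqref{eq:disk-extremal-differential} and \eqref{eq:disk-zero-extremal-differential}, then collects the coefficients. The one difference is that you derive the alignment \(\alpha_j\equiv\alpha\pmod{\pi}\) explicitly from \(|d(f_j)_0e^{i\theta}|=\mathcal M(|p_j|)\,|\cos(\theta-\alpha_j)|\), whereas the paper takes it from the remark on the sign freedom \(\sigma_j\) that precedes the corollary.
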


\begin{proof}
For \(p_j\ne0\), formula \eqref{eq:disk-extremal-differential} together with
\eqref{eq:polydisc-boundary-nonzero} gives
\[
 d(f_j)_0(e^{i\theta})
 =i\sigma_j e^{i\beta_j}\mathcal M(|p_j|)
  \cos(\theta-\alpha).
\]
For \(p_j=0\), formula \eqref{eq:disk-zero-extremal-differential} gives the
same form with a coefficient of modulus \(4/\pi=\mathcal M(0)\). Collecting
the component coefficients gives \eqref{eq:polydisc-one-dimensional}. Since
all \(\mathcal M(|p_j|)\) are positive, \(W\ne0\), and the image of
\(dF_0\) is exactly the real line spanned by \(W\).
\end{proof}

For box extremals, Proposition~\ref{prop:box-equality} gives the same
conclusion through the collinearity of the component gradients. The boundary
geometry is different, but the common-direction condition in the parameter
plane is the same.

At the center, the formula is especially simple.

\begin{corollary}\label{cor:polydisc-center}
For harmonic \(F:\D\to\D^n\) with \(F(0)=0\),
\begin{equation}\label{eq:polydisc-center}
 \|dF_0\|\leq\frac{4\sqrt n}{\pi},
\end{equation}
and the constant is sharp.
\end{corollary}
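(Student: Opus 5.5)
This is a direct specialization of Theorem~\ref{thm:polydisc} at the point \(p=0\), so I will simply substitute into \eqref{eq:polydisc-main}. With \(p=(0,\ldots,0)\), every factor has \(|p_j|=0\). The centered disk constant is \(\mathcal M(0)=4/\pi\) by the case \(r=0\) of Theorem~\ref{thm:disk-main}. Hence
\[
 M_{\D^n}(0)=\left(\sum_{j=1}^n\mathcal M(0)^2\right)^{1/2}
 =\left(n\cdot\frac{16}{\pi^2}\right)^{1/2}=\frac{4\sqrt n}{\pi},
\]
and \eqref{eq:polydisc-main} yields \eqref{eq:polydisc-center}.

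Sharpness is already part of Theorem~\ref{thm:polydisc}, via Theorem~\ref{thm:product-principle}. To exhibit an extremal explicitly, I take every component to be the Poisson integral of the same boundary function \(\Phi_j(e^{it})=i\,\sgn(\cos t)\).

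1. Each component maps \(\D\) into \(\D\), because the boundary function is unimodular and not essentially constant.
2. Each component vanishes at the origin, since \(\int_\T \sgn(\cos t)\dm=0\).
3. By \eqref{eq:disk-zero-extremal-differential}, every component satisfies \(d(f_j)_0(e^{i\theta})=\frac4\pi i\cos\theta\).

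All components therefore attain their norm \(4/\pi\) on the common vector \(e_1\), so \(|dF_0e_1|=\sqrt n\cdot 4/\pi\). This is exactly the common-direction equality case of Lemma~\ref{lem:block-operator}.

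There is no real obstacle. The only point needing care is confirming that the centered constant \(\mathcal M(0)\) equals \(4/\pi\), which the \(r=0\) case of Theorem~\ref{thm:disk-main} supplies directly, rather than reading it off the limit of the \(\lambda(r)\)-formula as \(r\to0\).
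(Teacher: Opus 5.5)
Your proposal is correct and follows the paper's route: the corollary is the $p=0$ specialization of Theorem~\ref{thm:polydisc}, with $\mathcal M(0)=4/\pi$ supplied by the $r=0$ case of Theorem~\ref{thm:disk-main}. Your explicit extremal is the member of the equality family \eqref{eq:polydisc-boundary-zero} with $\omega_j=i$ and $\alpha=0$.
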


The pointwise form and the version for complete conformal metrics on the domain follow from
Section~\ref{sec:pointwise-general}.

\begin{corollary}\label{cor:polydisc-pointwise}
For harmonic \(F=(f_1,\ldots,f_n):\D\to\D^n\),
\begin{equation}\label{eq:polydisc-pointwise}
 \|dF_z\|
 \leq
 \frac{
 \left(
 \sum_{j=1}^n\mathcal M(|f_j(z)|)^2
 \right)^{1/2}
 }{1-|z|^2}.
\end{equation}
If \(X\) is a hyperbolic Riemann surface carrying a complete
\(C^2\) conformal metric \(\rho|dz|^2\) with
\(K_\rho\geq-\kappa^2\) for some \(\kappa>0\), then every harmonic
\(F:X\to\D^n\) satisfies
\begin{equation}\label{eq:polydisc-complete}
 \frac{\|dF_P\|}{\sqrt{\rho(P)}}
 \leq
 \frac{\kappa}{2}
 \left(
 \sum_{j=1}^n\mathcal M(|f_j(P)|)^2
 \right)^{1/2}.
\end{equation}
\end{corollary}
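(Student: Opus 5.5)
This corollary follows by specializing the general pointwise results of Section~\ref{sec:pointwise-general} to the polydisc and then inserting the product formula. No new extremal analysis is needed; the argument only chains results already proved.

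For \eqref{eq:polydisc-pointwise}, I would apply Theorem~\ref{thm:general-pointwise} with \(G=\D^n\subset\C^n\simeq\R^{2n}\). The polydisc is a bounded domain, so that theorem gives
\[
 \|dF_z\|\leq\frac{M_{\D^n}(F(z))}{1-|z|^2}.
\]
Next, I would evaluate \(M_{\D^n}(F(z))\) using Theorem~\ref{thm:polydisc}, which is Theorem~\ref{thm:product-principle} applied to \(n\) copies of \(\D\) together with Theorem~\ref{thm:disk-main}. Taking \(p=F(z)\) gives
\[
 M_{\D^n}(F(z))=\Bigl(\sum_j\mathcal M(|f_j(z)|)^2\Bigr)^{1/2}.
\]
One point needs checking: \(M_\D(p_j)=\mathcal M(|p_j|)\) for every \(p_j\in\D\), including \(p_j=0\), where \(\mathcal M(0)=4/\pi\). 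This is exactly how \(\mathcal M\) was defined through target rotations at the start of Section~\ref{sec:disk-factor}. The same conclusion can also be read off directly from Corollary~\ref{cor:product-pointwise} with \(G_j=\D\).

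For \eqref{eq:polydisc-complete}, I would invoke Theorem~\ref{thm:complete-general} with \(G=\D^n\). The hypotheses are the same as there: \(X\) is hyperbolic, \(\rho\) is complete and \(C^2\), \(K_\rho\geq-\kappa^2\), and \(F\) is harmonic in local conformal coordinates, understood componentwise. That theorem gives
\[
 \frac{\|dF_P\|}{\sqrt{\rho(P)}}\le\frac{\kappa}{2}M_{\D^n}(F(P)).
\]
Substituting the product formula for \(M_{\D^n}\) then gives the stated inequality, as in the remark following Theorem~\ref{thm:complete-general}.

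I expect no real obstacle. The only care needed is bookkeeping: the Euclidean operator norm on \(\C^n\) must be the norm used in \(M_{\D^n}\), which is the convention fixed at the start of Section~\ref{sec:polydisc}, and \(M_\D\) must be identified with \(\mathcal M(|\cdot|)\) on all of \(\D\).
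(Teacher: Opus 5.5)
Your proposal is correct and follows the paper's route: the paper obtains this corollary by specializing Theorem~\ref{thm:general-pointwise} (equivalently Corollary~\ref{cor:product-pointwise}) and Theorem~\ref{thm:complete-general} to \(G=\D^n\), then inserting \(M_{\D^n}(p)^2=\sum_j\mathcal M(|p_j|)^2\) from Theorems~\ref{thm:product-principle} and~\ref{thm:disk-main}. Your check that \(M_\D(p_j)=\mathcal M(|p_j|)\) for all \(p_j\in\D\), including \(p_j=0\), via target rotations is exactly the bookkeeping this specialization needs.
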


The coordinatewise sharp estimate defines an \(\ell^\infty\)-type product
metric. On the disk we set
\begin{equation}\label{eq:rho-h}
 \rho_h(w)=\frac{2}{\mathcal M(|w|)}.
\end{equation}
Harmonic densities and the associated distance-decreasing viewpoint were
considered earlier in \cite{Mateljevic2018,ArsenovicGajicMateljevic}. The density in \eqref{eq:rho-h} is defined by the exact prescribed-value
extremal function \(\mathcal M\), and is therefore sharp for the present
problem. The function \(\mathcal M\) is positive and continuous on \([0,1)\); this
follows from Lemma~\ref{lem:disk-lambda}, Theorem~\ref{thm:disk-main}, and
dominated convergence at \(r=0\).  On the polydisc we define
\begin{equation}\label{eq:H-polydisc}
 \mathcal H_{\D^n}(p;v)
 =
 \max_{1\leq j\leq n}
 \frac{2|v_j|}{\mathcal M(|p_j|)}.
\end{equation}
This is a continuous family of reversible norms on the tangent spaces of
\(\D^n\), analogous to \(\mathcal F_Q\).  We denote by \(d_h\) the path
distance induced by \(\rho_h(w)|dw|\), and by \(d_{\mathcal H}\) the path
distance induced by \(\mathcal H_{\D^n}\). The maximum in the infinitesimal
norm gives the corresponding maximum formula for the induced path distance.

\begin{proposition}\label{prop:H-product-distance}
For \(p,q\in\D^n\),
\begin{equation}\label{eq:H-product-distance}
 d_{\mathcal H}(p,q)
 =
 \max_{1\leq j\leq n}d_h(p_j,q_j).
\end{equation}
\end{proposition}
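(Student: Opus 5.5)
We prove the two inequalities separately, following the argument already used for the box distance \eqref{eq:dQ-product}. The metric \(\mathcal H_{\D^n}\) is the \(\ell^\infty\) combination of the factor densities \(\rho_h(p_j)|v_j|\). Since \(\mathcal M\) is positive and continuous on \([0,1)\), \(\rho_h\) is a positive continuous density on \(\D\), so \(d_h\) is a genuine path distance.

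\textbf{Lower bound.} Let \(\gamma=(\gamma_1,\ldots,\gamma_n):[0,1]\to\D^n\) be a piecewise \(C^1\) curve from \(p\) to \(q\). For each \(j\), pointwise
\[
 \mathcal H_{\D^n}(\gamma(t);\gamma'(t))\geq\rho_h(\gamma_j(t))|\gamma_j'(t)|.
\]
Integrating shows that the \(\mathcal H\)-length of \(\gamma\) is at least the \(\rho_h\)-length of \(\gamma_j\), which is at least \(d_h(p_j,q_j)\). Taking the maximum over \(j\) and then the infimum over \(\gamma\) gives \(d_{\mathcal H}(p,q)\geq\max_j d_h(p_j,q_j)\).

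\textbf{Upper bound.} Set \(\Delta=\max_j d_h(p_j,q_j)\) and fix \(\varepsilon>0\). For each \(j\), choose a piecewise \(C^1\) curve \(\sigma_j\) from \(p_j\) to \(q_j\) in \(\D\) with \(\rho_h\)-length \(L_j<d_h(p_j,q_j)+\varepsilon\). If \(p_j=q_j\), take \(\sigma_j\) constant. Reparametrize each nonconstant \(\sigma_j\) on \([0,1]\) with constant \(\rho_h\)-speed \(L_j\). This is possible because \(\rho_h\) is bounded below by a positive constant on the compact image of \(\sigma_j\): the arclength function is then a bi-Lipschitz homeomorphism, so the reparametrized curve is Lipschitz and absolutely continuous, and length is computed by the same integral. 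The product curve \(\sigma=(\sigma_1,\ldots,\sigma_n)\) then has \(\mathcal H\)-speed \(\max_j L_j\) almost everywhere, so its length is at most \(\Delta+\varepsilon\). Letting \(\varepsilon\to0\) gives \(d_{\mathcal H}(p,q)\leq\Delta\).

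\textbf{Main obstacle.} Unlike the strip case, we have no explicit \(d_h\)-geodesics and no guarantee that minimizing curves exist; the path metric is incomplete, as noted in the introduction. This is why the argument uses \(\varepsilon\)-almost minimizing curves rather than geodesics. The remaining technical point is that the constant-speed reparametrizations are only Lipschitz, not piecewise \(C^1\). We handle this by defining \(d_{\mathcal H}\) via absolutely continuous (or Lipschitz) curves, which gives the same infimum: \(\mathcal H_{\D^n}\) is continuous, so piecewise \(C^1\) curves approximate Lipschitz curves in length.
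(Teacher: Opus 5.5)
Your proposal is correct and follows essentially the same argument as the paper. The lower bound comes from the pointwise domination of each coordinate's \(\rho_h\)-speed by the \(\mathcal H_{\D^n}\)-speed, and the upper bound from \(\varepsilon\)-almost-minimizing coordinate curves reparametrized on \([0,1]\) at constant \(\rho_h\)-speed \(L_j\), whose product curve has length \(\max_j L_j\).

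One small correction to your justification. The \(\rho_h\)-arclength, viewed as a function of the original parameter, need not be bi-Lipschitz, and it need not even be injective if \(\sigma_j'\) vanishes on a subinterval. What is bi-Lipschitz is the correspondence between Euclidean arclength and \(\rho_h\)-arclength along \(\sigma_j\), since \(\rho_h\) is bounded above and below on the compact image. That fact is what makes the constant-speed reparametrization Lipschitz.
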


\begin{proof}
Every curve in \(\D^n\) has \(\mathcal H_{\D^n}\)-length at least the
\(\rho_h\)-length of each coordinate curve, which gives the lower bound.
For the reverse inequality, rectifiable coordinate curves whose lengths are
within \(\varepsilon\) of the corresponding \(d_h\)-distances may be
parametrized with constant speed on \([0,1]\).  Their product then has
length equal to the maximum of the coordinate lengths.  Letting \(\varepsilon\to0\) proves
\eqref{eq:H-product-distance}.
\end{proof}

The pointwise disk estimate gives the following contraction theorem for the
harmonic product metric.

\begin{theorem}
\label{thm:H-contraction}
Every harmonic map \(F:\D\to\D^n\) satisfies
\begin{equation}\label{eq:H-infinitesimal}
 \mathcal H_{\D^n}(F(z);dF_z\xi)
 \leq
 \Hyp_\D(z;\xi)
 =
 \frac{2|\xi|}{1-|z|^2}.
\end{equation}
Consequently,
\begin{equation}\label{eq:H-global}
 d_{\mathcal H}(F(z_0),F(z_1))
 \leq
 d_\D(z_0,z_1).
\end{equation}
The infinitesimal constant is sharp.
\end{theorem}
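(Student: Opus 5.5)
The plan is to follow the proof of Theorem~\ref{thm:product-strip-contraction}, with the interval estimate replaced by the disk factor. First I establish the pointwise scalar estimate. For each component \(f_j:\D\to\D\) and each \(z\in\D\), Theorem~\ref{thm:general-pointwise} with \(G=\D\), whose pointwise factor is \(M_\D(f_j(z))=\mathcal M(|f_j(z)|)\) by target rotation invariance, gives
\[
 \|d(f_j)_z\|\leq\frac{\mathcal M(|f_j(z)|)}{1-|z|^2}.
\]
Here one precomposes with the disk automorphism \(\varphi_z\) and applies Theorem~\ref{thm:disk-main} at the origin. Hence, for \(\xi\in T_z\D\),
\[
 \frac{2|d(f_j)_z\xi|}{\mathcal M(|f_j(z)|)}\leq\frac{2|\xi|}{1-|z|^2}.
\]
Since \(\mathcal M>0\) on \([0,1)\), the quotient is well defined. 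Taking the maximum over \(j\) gives \eqref{eq:H-infinitesimal} directly from the definition \eqref{eq:H-polydisc}.

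For the global estimate \eqref{eq:H-global}, take a piecewise \(C^1\) curve \(\gamma\) in \(\D\) from \(z_0\) to \(z_1\). Then \(F\circ\gamma\) is piecewise \(C^1\) in \(\D^n\), and integrating \eqref{eq:H-infinitesimal} along \(\gamma\) shows that its \(\mathcal H_{\D^n}\)-length is at most the \(\Hyp_\D\)-length of \(\gamma\). Taking the infimum over \(\gamma\), with the hyperbolic geodesic as a candidate, gives \(d_{\mathcal H}(F(z_0),F(z_1))\leq d_\D(z_0,z_1)\). The only technical point is that \(\mathcal H_{\D^n}\) is merely a continuous family of norms. This suffices for the length functional on piecewise \(C^1\) curves, and \(d_{\mathcal H}\) is defined by that infimum.

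Sharpness means that the constant \(1\) in \eqref{eq:H-infinitesimal} cannot be lowered at any point \(p\in\D^n\), \(z\in\D\), or at least that equality is achieved. I would show attainment at every prescribed \(z\) and \(F(z)=p\).
\begin{enumerate}
\item Reduce to \(z=0\) by precomposing with disk automorphisms, since both sides transform compatibly.
\item Choose \(f_1\) to be the disk extremal of Proposition~\ref{prop:disk-equality} at \(p_1\), with maximizing direction \(\xi=e^{i\alpha}\). Then \(2|d(f_1)_0\xi|/\mathcal M(|p_1|)=2=\Hyp_\D(0;\xi)\).
\item Choose the remaining components constant, \(f_j\equiv p_j\). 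Alternatively, one may take them extremal with the same direction \(\alpha\); either choice keeps their ratios at most \(2\).
\end{enumerate}
The maximum then equals \(2|\xi|\), so equality holds.

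I expect the main obstacle to be only the bookkeeping at \(p_1=0\), where the extremal of \eqref{eq:disk-zero-equality} is used instead. The attainment of \(\mathcal M\) shown in Theorem~\ref{thm:disk-main} makes this exact rather than asymptotic.
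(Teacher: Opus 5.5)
Your proposal is correct and follows the paper's proof essentially step for step. The paper uses the same four ingredients: the componentwise bound from Theorem~\ref{thm:general-pointwise}, multiplication by $2/\mathcal M(|f_j(z)|)$ followed by maximization over $j$, integration along curves for the distance estimate, and sharpness from a single nonconstant disk-extremal component with the others constant. Your extra remarks on reducing to $z=0$ by automorphisms and on the centered case $p_1=0$ spell out details the paper leaves implicit.
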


\begin{proof}
For every component,
Theorem~\ref{thm:general-pointwise} gives
\[
 |d(f_j)_z\xi|
 \leq
 \frac{\mathcal M(|f_j(z)|)}{1-|z|^2}|\xi|.
\]
Multiplication by \(2/\mathcal M(|f_j(z)|)\) and maximization in \(j\)
gives \eqref{eq:H-infinitesimal}.  Integration along curves gives
\eqref{eq:H-global}.  Sharpness occurs already with one nonconstant
component.
\end{proof}

The sharpness of \(\mathcal H_{\D^n}\) has the following precise pointwise
form within the class of coordinatewise norms considered below.

\begin{proposition}\label{prop:H-maximal-coordinatewise}
Suppose
\[
 \mathcal N(p;v)=\max_{1\leq j\leq n}a_j(p)|v_j|,
 \qquad a_j(p)>0,
\]
and assume that every harmonic map \(F:\D\to\D^n\) satisfies
\[
 \mathcal N(F(z);dF_z\xi)
 \leq \frac{2|\xi|}{1-|z|^2}.
\]
Then
\[
 \mathcal N(p;v)\leq\mathcal H_{\D^n}(p;v)
\]
for every \(p\in\D^n\) and \(v\in\C^n\).
\end{proposition}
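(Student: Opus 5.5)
The plan is to test the assumed contraction inequality on maps that have only one nonconstant coordinate, and to use them at the origin only. The aim is to show \(a_j(p)\leq 2/\mathcal M(|p_j|)\) for every \(j\) and every \(p\in\D^n\). Once this holds, the conclusion follows coordinatewise:
\[
 \mathcal N(p;v)=\max_j a_j(p)|v_j|
 \leq \max_j\frac{2|v_j|}{\mathcal M(|p_j|)}=\mathcal H_{\D^n}(p;v).
\]

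Fix \(p\in\D^n\) and an index \(j\), and let \(0<\varepsilon<\mathcal M(|p_j|)\). By the definition of \(\mathcal M\) as a supremum (and rotation invariance in the target), there is a harmonic \(g:\D\to\D\) with \(g(0)=p_j\) and \(\|dg_0\|>\mathcal M(|p_j|)-\varepsilon\). Theorem~\ref{thm:disk-main} shows the supremum is attained, so one could take \(\varepsilon=0\), but the approximation argument avoids depending on that. Pick a unit vector \(\xi\in\R^2\) with \(|dg_0\xi|=\|dg_0\|\). Define \(F:\D\to\D^n\) by \(F_j=g\) and \(F_k\equiv p_k\) for \(k\neq j\). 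This map is harmonic, takes values in \(\D^n\), and satisfies \(F(0)=p\).

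Its differential in direction \(\xi\) is \(dF_0\xi=(0,\ldots,0,dg_0\xi,0,\ldots,0)\). Applying the hypothesis at \(z=0\) gives
\[
 a_j(p)\,|dg_0\xi|=\mathcal N(p;dF_0\xi)\leq 2,
\]
where the other coordinates contribute \(a_k(p)\cdot0=0\) to the maximum. Hence \(a_j(p)\bigl(\mathcal M(|p_j|)-\varepsilon\bigr)<2\). Letting \(\varepsilon\to0^+\) yields \(a_j(p)\leq 2/\mathcal M(|p_j|)\).

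No step is really an obstacle. The only point needing care is that the test map is admissible: the constant coordinates lie in \(\D\), and the maximum defining \(\mathcal N\) only sees the \(j\)-th coordinate. Positivity of \(\mathcal M(|p_j|)\) guarantees that the final division is legitimate.
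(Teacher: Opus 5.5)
Your proof is correct and matches the paper's: test the contraction at the origin on a map whose \(j\)-th coordinate is a disk (near-)extremal with value \(p_j\) and whose other coordinates are constant, in a maximizing direction, to get \(a_j(p)\mathcal M(|p_j|)\leq 2\), and then take the maximum over \(j\). The only difference is that you use near-extremals and let \(\varepsilon\to0^+\), whereas the paper uses the attained extremal from Theorem~\ref{thm:disk-main}; this makes no difference to the argument.
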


\begin{proof}
For fixed \(p\in\D^n\) and an index \(j\), a disk extremal with prescribed
value \(p_j\) in the \(j\)-th component, together with constant remaining
components at the corresponding coordinates of \(p\), defines a harmonic
map into \(\D^n\). At the origin, in a unit direction maximizing the
differential of the nonconstant component, the assumed contraction gives
\[
 a_j(p)\mathcal M(|p_j|)\leq2.
\]
Hence \(a_j(p)\leq2/\mathcal M(|p_j|)\) for every \(j\), which proves the
claim after taking the maximum over the coordinates.
\end{proof}

Thus \(\mathcal H_{\D^n}\) is pointwise maximal in this coordinatewise class;
no maximality among arbitrary direction-dependent norms is asserted.

As in the box case, the Euclidean extremal density and the product density
are related by the \(\ell^2\)-to-\(\ell^\infty\) comparison:
\begin{equation}\label{eq:E-vs-H-polydisc}
 \mathcal E_{\D^n}(p;v)
 =
 \frac{2|v|}
 {\left(\sum_j\mathcal M(|p_j|)^2\right)^{1/2}}
 \leq
 \mathcal H_{\D^n}(p;v).
\end{equation}

The relation with the classical Kobayashi geometry differs from the box case.

\begin{theorem}
\label{thm:H-vs-Kob}
For every \(p\in\D^n\) and every nonzero \(v\in\C^n\),
\begin{equation}\label{eq:H-vs-Kob}
 \mathcal H_{\D^n}(p;v)
 <
 2\kappa_{\D^n}(p;v)
 =
 \max_j\frac{2|v_j|}{1-|p_j|^2}.
\end{equation}
Moreover, the path metric \(d_{\mathcal H}\) is not complete.
\end{theorem}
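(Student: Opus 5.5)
The strict inequality \eqref{eq:H-vs-Kob} reduces to a scalar comparison of factor densities. The Kobayashi normalization $\kappa_\D(z;\xi)=|\xi|/(1-|z|^2)$ and the product property of the Kobayashi-Royden metric, already used in Section~\ref{sec:strip-product}, give the equality $2\kappa_{\D^n}(p;v)=\max_j 2|v_j|/(1-|p_j|^2)$. By Lemma~\ref{lem:disk-lower}, specifically \eqref{eq:disk-vs-holo}, we have $\mathcal M(|p_j|)>1-|p_j|^2$ for every $j$. Hence, whenever $v_j\neq0$,
\[
 \frac{2|v_j|}{\mathcal M(|p_j|)}<\frac{2|v_j|}{1-|p_j|^2}.
\]
For a nonzero $v$, choose an index $k$ at which the maximum defining $\mathcal H_{\D^n}(p;v)$ is attained. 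That maximum is positive, so $v_k\neq0$, and therefore
\[
 \mathcal H_{\D^n}(p;v)=\frac{2|v_k|}{\mathcal M(|p_k|)}<\frac{2|v_k|}{1-|p_k|^2}\leq 2\kappa_{\D^n}(p;v).
\]
This proves the strict inequality on every nonzero tangent vector.

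For incompleteness, Proposition~\ref{prop:H-product-distance} shows it suffices to prove that the one-dimensional path metric $d_h$ is incomplete. Indeed, if a sequence $w_k$ in $\D$ is $d_h$-Cauchy but leaves every compact set, then the sequence $(w_k,0,\ldots,0)$ is $d_{\mathcal H}$-Cauchy by the max formula and also leaves every compact subset of $\D^n$. The natural candidate is the radial segment $w(s)=s$, $0\le s<1$. Its $\rho_h$-length is
\[
 \int_0^1\frac{2\,ds}{\mathcal M(s)}.
\]
Lemma~\ref{lem:disk-lower} gives $\mathcal M(s)\ge \frac4\pi\sqrt{1-s^2}$, so this length is at most
\[
 \frac{\pi}{2}\int_0^1\frac{ds}{\sqrt{1-s^2}}=\frac{\pi^2}{4}<\infty,
\]
using only the continuity and positivity of $\mathcal M$ on $[0,1)$ noted before \eqref{eq:H-polydisc}. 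The asymptotics $\mathcal M(r)\sim 2\sqrt{1-r}$ give the same conclusion independently.

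Consequently the points $s_k=1-1/k$ form a $d_h$-Cauchy sequence, because $d_h(s_k,s_l)$ is bounded by the length of the segment between them, which is the tail of a convergent integral. It remains to rule out convergence of $s_k$ to an interior point. This is the only step requiring care, and it needs the fact that the $d_h$-topology agrees with the Euclidean topology on $\D$. That holds because $\rho_h$ is continuous and positive, hence locally bounded above and below by positive constants. If $s_k\to w\in\D$ in $d_h$, then $s_k\to w$ in the Euclidean sense, which contradicts $s_k\to1$. Therefore $d_h$, and hence $d_{\mathcal H}$, is not complete. In particular, the Euclidean boundary lies at finite $d_{\mathcal H}$-distance from the origin.
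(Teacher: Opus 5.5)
Your proof is correct and follows essentially the same route as the paper. The strict inequality comes coordinatewise from \eqref{eq:disk-vs-holo}. Incompleteness comes from the bound $\rho_h(r)\leq \pi/(2\sqrt{1-r^2})$, which gives radial length at most $\pi^2/4$, together with the local agreement of the $d_h$-topology with the Euclidean one and the transfer to $d_{\mathcal H}$ via Proposition~\ref{prop:H-product-distance}. Your explicit choice of a maximizing index $k$ with $v_k\neq0$ makes the strictness step slightly more careful than the paper's one-line version.
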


\begin{proof}
Lemma~\ref{lem:disk-lower} gives
\[
 \mathcal M(|p_j|)>1-|p_j|^2
\]
for every coordinate.  Hence every nonzero coordinate term in
\eqref{eq:H-polydisc} is strictly smaller than the corresponding Kobayashi
term, and \eqref{eq:H-vs-Kob} follows.

For incompleteness, Lemma~\ref{lem:disk-lower} yields
\[
 \rho_h(r)
 =
 \frac2{\mathcal M(r)}
 \leq
 \frac{\pi}{2\sqrt{1-r^2}},
 \qquad 0\leq r<1.
\]
Thus the radial distance from \(0\) to the Euclidean boundary is finite:
\[
 \int_0^1\rho_h(r)\,dr
 \leq
 \frac\pi2
 \int_0^1\frac{dr}{\sqrt{1-r^2}}
 =
 \frac{\pi^2}{4}.
\]
A radial sequence tending to \(1\) is therefore Cauchy in \(d_h\).
Since \(\rho_h\) is continuous and strictly positive on \(\D\), the path
metric \(d_h\) induces the Euclidean topology locally. Hence such a radial
sequence cannot converge in \(d_h\) to a point of \(\D\). Thus \(d_h\) is
not complete. By \eqref{eq:H-product-distance}, the same conclusion holds
for \(d_{\mathcal H}\) on \(\D^n\).
\end{proof}

\section{Duality, Pontryagin's principle, and boundary geometry}
\label{sec:duality-control}

Duality and optimal control give a complementary view of the extremal boundary
functions. They explain why interval and disk factors select different types
of boundary values, while the product theorem supplies the common-direction
condition that couples the factors.

\subsection{The interval problem and bang-bang extremals}

For \(q\in(0,1)\), the normalized interval boundary problem is
\begin{equation}\label{eq:dual-interval-problem}
 \sup\left\{
 \int_\T U(e^{it})\cos t\dm:
 0\leq U\leq1\ \text{a.e.},
 \quad
 \int_\T U\dm=q
 \right\}.
\end{equation}
For a real multiplier \(\mu\),
\[
 \int_\T U\cos t\dm
 =
 \mu q+\int_\T U(\cos t-\mu)\dm.
\]
Pointwise maximization over \(0\leq U\leq1\) gives
\begin{equation}\label{eq:interval-dual}
 \int_\T U\cos t\dm
 \leq
 \mu q+\int_\T(\cos t-\mu)_+\dm.
\end{equation}
The choice \(\mu=\cos(\pi q)\) makes the threshold set
\[
 \{\cos t>\mu\}
\]
an arc of normalized measure \(q\).  Equality in
\eqref{eq:interval-dual} then forces
\[
 U=1\quad\text{where }\cos t>\mu,
 \qquad
 U=0\quad\text{where }\cos t<\mu.
\]
The level set has measure zero, so the unique maximizer, up to a null set,
is the characteristic function of the arc \(|t|<\pi q\). Replacing the
weight by \(\cos(t-\theta)\) rotates this arc by \(\theta\), as in
Lemma~\ref{lem:rearrangement}.

The same calculation is a fixed-time control problem.  A measurable control
\(u:[0,2\pi]\to[0,1]\) determines
\[
 x'(t)=\frac{u(t)}{2\pi},
 \qquad
 y'(t)=\frac{u(t)\cos t}{2\pi},
 \qquad
 x(0)=y(0)=0,
\]
with terminal constraint \(x(2\pi)=q\) and objective
\(\max y(2\pi)\).  If \(\lambda_0\geq0\) is the multiplier of the terminal payoff and
\(\lambda_x,\lambda_y\) are the adjoint variables, the Pontryagin Hamiltonian
is
\[
 H_P(t,u)
 =
 \frac{u}{2\pi}
 (\lambda_x+\lambda_y\cos t).
\]
The adjoint variables are constant.  If \(\nu\) denotes the multiplier of
the terminal constraint, the augmented terminal payoff is
\[
 \lambda_0y(2\pi)+\nu\bigl(x(2\pi)-q\bigr),
\]
and the transversality condition gives
\(\lambda_y=\lambda_0\) and \(\lambda_x=\nu\).  In the abnormal case
\(\lambda_0=0\), nontriviality forces \(\lambda_x\ne0\), and the maximum
condition would make \(u\) identically \(0\) or \(1\), contrary to
\(0<q<1\).  Hence every maximizer is normal.  After the normalization
\(\lambda_0=1\) and the notation \(\lambda_x=-\mu\), Pontryagin's maximum
condition becomes
\[
 u(t)\in
 \operatorname*{argmax}_{0\leq v\leq1}
 v(\cos t-\mu).
\]
The terminal constraint again gives
\(\mu=\cos(\pi q)\).  Thus Pontryagin's principle reproduces the same
bang-bang extremal; see \cite{Pontryagin}.

For boxes, the control problem separates coordinatewise.  The bang-bang law
therefore recovers the vertex-valued boundary behavior in
Remark~\ref{rem:rectangle-boundary-values} and
Proposition~\ref{prop:box-equality}.  Theorem~\ref{thm:product-equality}
provides the additional common maximizing direction in \(\R^2\) that
couples the factors.

\subsection{The disk problem and unimodular extremals}

For a nonzero prescribed disk value, the control set is the closed unit disk
rather than an interval. After the reductions in Section~\ref{sec:disk-factor},
so that \(0<r<1\), the extremal direction is perpendicular to the prescribed
value. With \(c(t)=\cos t\) and the multiplier
\(\lambda=\lambda(r)>0\), the pointwise Hamiltonian
is
\begin{equation}\label{eq:disk-Hamiltonian}
 H_\lambda(t,\zeta)
 =
 \re((\lambda-i c(t))\zeta),
 \qquad |\zeta|\leq1.
\end{equation}
Since \(\lambda-i c(t)\ne0\), the maximizer is unique:
\begin{equation}\label{eq:disk-control}
 \zeta_\lambda(t)
 =
 \frac{\lambda+i c(t)}
 {\sqrt{\lambda^2+c(t)^2}}.
\end{equation}
The prescribed mean determines \(\lambda\) through
\eqref{eq:disk-lambda}. Thus the same pointwise linear maximization that yields
bang-bang interval controls yields a smooth unimodular control for a nonzero
prescribed disk value.

The difference comes from the geometry of the admissible control set. A nonzero linear functional on an
interval is maximized at an endpoint, whereas a nonzero real linear
functional on the Euclidean disk is maximized at a unique point of the unit
circle.  The factorwise product principle then gives the corresponding
boundary geometry:
\[
 \text{box extremals}\quad\longrightarrow\quad
 \text{vertices},
\]
while
\[
 \text{polydisc extremals}\quad\longrightarrow\quad
 \text{the distinguished boundary }\T^n.
\]
The common maximizing direction in \(\R^2\) is not a feature of the
pointwise control set; it is the equality condition imposed by the Euclidean
operator norm of the full product differential.

\section{Concluding remarks}\label{sec:conclusion}

The product principle isolates the part of the prescribed-value extremal
problem that is universal under finite Euclidean products. All information
specific to a factor is contained in the quantities \(M_{G_j}(p_j)\); on the
product, these constants combine by the Euclidean sum-of-squares law. For a
given product map, equality is governed by one additional condition: the
factor differentials must share a maximizing direction in the parameter
plane. Thus a sharp factor estimate immediately produces a sharp product
estimate, while the equality problem reduces to an explicit compatibility
condition.

The box and polydisc applications show that this mechanism is independent of
the boundary geometry of the factors. Box extremals are built from
endpoint-valued interval boundary functions and lead to a complete metric
inherited from the Poincar\'e geometry of vertical strips. Polydisc extremals
are built from unimodular disk boundary functions and lead to a sharp
harmonic product metric that is pointwise maximal within the coordinatewise
class but incomplete. The contrast separates the universal product principle
from the distinct metric geometries carried by the individual factors.

\begingroup
\raggedright
\bibliographystyle{plain}
\bibliography{PP_MK_MM}

@book{Ahlfors,
  author    = {Ahlfors, Lars V.},
  title     = {Conformal Invariants: Topics in Geometric Function Theory},
  publisher = {McGraw-Hill},
  address   = {New York},
  year      = {1973},
  note      = {AMS Chelsea reprint, Providence, RI, 2010; \doi{10.1090/chel/371}}
}

@article{ArsenovicGajicMateljevic,
  author  = {Arsenovi{\'c}, M. and Gaji{\'c}, J. and Mateljevi{\'c}, M.},
  title   = {{Schwarz-Pick} lemma for harmonic and generalized harmonic functions},
  journal = {Lobachevskii J. Math.},
  volume  = {45},
  number  = {12},
  year    = {2024},
  pages   = {5975--6010},
  note    = {\doi{10.1134/S1995080224607409}}
}

@book{AxlerBourdonRamey,
  author    = {Axler, S. and Bourdon, P. and Ramey, W.},
  title     = {Harmonic Function Theory},
  edition   = {Second},
  series    = {Graduate Texts in Mathematics},
  volume    = {137},
  publisher = {Springer},
  address   = {New York},
  year      = {2001},
  note      = {\doi{10.1007/978-1-4757-8137-3}}
}

@article{Burgeth,
  author  = {Burgeth, B.},
  title   = {A {Schwarz} lemma for harmonic and hyperbolic-harmonic functions in higher dimensions},
  journal = {Manuscripta Math.},
  volume  = {77},
  number  = {2-3},
  year    = {1992},
  pages   = {283--291},
  note    = {\doi{10.1007/BF02567058}}
}

@article{Chen2013,
  author  = {Chen, H. H.},
  title   = {The {Schwarz-Pick} lemma and {Julia} lemma for real planar harmonic mappings},
  journal = {Sci. China Math.},
  volume  = {56},
  number  = {11},
  year    = {2013},
  pages   = {2327--2334},
  note    = {\doi{10.1007/s11425-013-4691-0}}
}

@article{ChenRasila,
  author  = {Chen, S. and Rasila, A.},
  title   = {{Schwarz-Pick} type estimates of pluriharmonic mappings in the unit polydisk},
  journal = {Illinois J. Math.},
  volume  = {58},
  number  = {4},
  year    = {2014},
  pages   = {1015--1024},
  note    = {\doi{10.1215/ijm/1446819298}}
}

@article{ChenHamada,
  author  = {Chen, S. and Hamada, H.},
  title   = {Some sharp {Schwarz-Pick} type estimates and their applications of harmonic and pluriharmonic functions},
  journal = {J. Funct. Anal.},
  volume  = {282},
  number  = {1},
  year    = {2022},
  note    = {Art. 109254; \doi{10.1016/j.jfa.2021.109254}}
}

@article{Colonna,
  author  = {Colonna, F.},
  title   = {The {Bloch} constant of bounded harmonic mappings},
  journal = {Indiana Univ. Math. J.},
  volume  = {38},
  number  = {4},
  year    = {1989},
  pages   = {829--840},
  note    = {\doi{10.1512/iumj.1989.38.38039}}
}

@book{DurenBook,
  author    = {Duren, P.},
  title     = {Harmonic Mappings in the Plane},
  series    = {Cambridge Tracts in Mathematics},
  volume    = {156},
  publisher = {Cambridge University Press},
  address   = {Cambridge},
  year      = {2004},
  note      = {\doi{10.1017/CBO9780511546600}}
}

@article{DurenSchoberVariational,
  author  = {Duren, P. and Schober, G.},
  title   = {A variational method for harmonic mappings onto convex regions},
  journal = {Complex Variables Theory Appl.},
  volume  = {9},
  number  = {2-3},
  year    = {1987},
  pages   = {153--168},
  note    = {\doi{10.1080/17476938708814259}}
}

@article{DurenSchober,
  author  = {Duren, P. and Schober, G.},
  title   = {Linear extremal problems for harmonic mappings of the disk},
  journal = {Proc. Amer. Math. Soc.},
  volume  = {106},
  number  = {4},
  year    = {1989},
  pages   = {967--973},
  note    = {\doi{10.1090/S0002-9939-1989-0953740-5}}
}

@article{ForstnericKalaj,
  author  = {Forstneri{\v c}, F. and Kalaj, D.},
  title   = {{Schwarz-Pick} lemma for harmonic maps which are conformal at a point},
  journal = {Anal. PDE},
  volume  = {17},
  number  = {3},
  year    = {2024},
  pages   = {981--1003},
  note    = {\doi{10.2140/apde.2024.17.981}}
}

@article{Heinz,
  author  = {Heinz, E.},
  title   = {On one-to-one harmonic mappings},
  journal = {Pacific J. Math.},
  volume  = {9},
  number  = {1},
  year    = {1959},
  pages   = {101--105},
  note    = {\doi{10.2140/pjm.1959.9.101}}
}

@article{HuangZhaoLi,
  author  = {Huang, Z. and Zhao, D. and Li, H.},
  title   = {A boundary {Schwarz} lemma for pluriharmonic mappings between the unit polydiscs of any dimensions},
  journal = {Filomat},
  volume  = {34},
  number  = {9},
  year    = {2020},
  pages   = {3151--3160},
  note    = {\doi{10.2298/FIL2009151H}}
}

@article{JarnickiPflugProduct,
  author  = {Jarnicki, M. and Pflug, P.},
  title   = {Invariant pseudodistances and pseudometrics - completeness and product property},
  journal = {Ann. Polon. Math.},
  volume  = {55},
  year    = {1991},
  pages   = {169--189},
  note    = {\doi{10.4064/ap-55-1-169-189}}
}

@article{KalajVuorinen,
  author  = {Kalaj, D. and Vuorinen, M.},
  title   = {On harmonic functions and the {Schwarz} lemma},
  journal = {Proc. Amer. Math. Soc.},
  volume  = {140},
  number  = {1},
  year    = {2012},
  pages   = {161--165},
  note    = {\doi{10.1090/S0002-9939-2011-10914-6}}
}

@article{KMS2025,
  author  = {Kne{\v z}evi{\'c}, M. and Mateljevi{\'c}, M. and Svetlik, M.},
  title   = {Some estimates for hyperbolic derivative for {HQC} mappings and applications},
  journal = {Results Math.},
  volume  = {80},
  number  = {8},
  year    = {2025},
  note    = {Art. 229; \doi{10.1007/s00025-025-02546-8}}
}

@misc{KnezevicMateljevicTarget2026,
  author       = {Kne{\v z}evi{\'c}, M. and Mateljevi{\'c}, M.},
  title        = {Target geometry in prescribed-value {Schwarz} lemmas for harmonic maps},
  year         = {2026},
  eprint       = {2609.19609},
  archivePrefix= {arXiv},
  note         = {arXiv:2609.19609}
}

@misc{KnezevicMateljevic2026,
  author = {Kne{\v z}evi{\'c}, M. and Mateljevi{\'c}, M.},
  title  = {An extremal {Schwarz} lemma for harmonic self-mappings of the disk with prescribed value},
  year   = {2026},
  month  = jun,
  note   = {ResearchGate preprint; \doi{10.13140/RG.2.2.10661.46561}}
}

@book{Kobayashi1998,
  author    = {Kobayashi, S.},
  title     = {Hyperbolic Complex Spaces},
  series    = {Grundlehren der Mathematischen Wissenschaften},
  volume    = {318},
  publisher = {Springer},
  address   = {Berlin},
  year      = {1998},
  note      = {\doi{10.1007/978-3-662-03582-5}}
}

@article{Mateljevic2018,
  author  = {Mateljevi{\'c}, M.},
  title   = {{Schwarz} lemma and {Kobayashi} metrics for harmonic and holomorphic functions},
  journal = {J. Math. Anal. Appl.},
  volume  = {464},
  number  = {1},
  year    = {2018},
  pages   = {78--100},
  note    = {\doi{10.1016/j.jmaa.2018.03.069}}
}

@article{MateljevicSvetlik,
  author  = {Mateljevi{\'c}, M. and Svetlik, M.},
  title   = {Hyperbolic metric on the strip and the {Schwarz} lemma for {HQR} mappings},
  journal = {Appl. Anal. Discrete Math.},
  volume  = {14},
  number  = {1},
  year    = {2020},
  pages   = {150--168},
  note    = {\doi{10.2298/AADM200104001M}}
}

@book{Pontryagin,
  author    = {Pontryagin, L. S. and Boltyanskii, V. G. and Gamkrelidze, R. V. and Mishchenko, E. F.},
  title     = {The Mathematical Theory of Optimal Processes},
  publisher = {Interscience},
  address   = {New York},
  year      = {1962}
}

@article{Wegmann,
  author  = {Wegmann, R.},
  title   = {Extremal problems for harmonic mappings from the unit disc to convex regions},
  journal = {J. Comput. Appl. Math.},
  volume  = {46},
  number  = {1-2},
  year    = {1993},
  pages   = {165--181},
  note    = {\doi{10.1016/0377-0427(93)90293-K}}
}

@article{Yau,
  author  = {Yau, S.-T.},
  title   = {A general {Schwarz} lemma for {K\"ahler} manifolds},
  journal = {Amer. J. Math.},
  volume  = {100},
  number  = {1},
  year    = {1978},
  pages   = {197--203},
  note    = {\doi{10.2307/2373880}}
}
\endgroup

\end{document}